\definecolor{myred}{rgb}{0.75,0,0}
\definecolor{mygreen}{rgb}{0,0.5,0}
\definecolor{myblue}{rgb}{0,0,0.65}
\newcommand\T{\rule{0pt}{3.1ex}}
\newcommand\B{\rule[-1.7ex]{0pt}{0pt}}
\def\wh{\widehat}
\def\wt{\widetilde}
\def\ov{\overline}
\def\un{\underline}
    \def\AM{{\mathbb{A}}}
\def\CG{{\mathfrak C}}    \def\CM{{\mathbb{C}}}
\def\DG{{\mathfrak D}}    
    \def\EM{{\mathbb{E}}}
    \def\FM{{\mathbb{F}}}
  \def\gG{{\mathfrak g}}  \def\GM{{\mathbb{G}}}
\def\IG{{\mathfrak I}}    
    \def\KM{{\mathbb{K}}}
\def\LG{{\mathfrak L}}    \def\LM{{\mathbb{L}}}
  \def\mG{{\mathfrak m}}  
    \def\NM{{\mathbb{N}}}
\def\OG{{\mathfrak O}}    \def\OM{{\mathbb{O}}}
\def\PG{{\mathfrak P}}    \def\PM{{\mathbb{P}}}
    \def\QM{{\mathbb{Q}}}
\def\SG{{\mathfrak S}}    
\def\TG{{\mathfrak T}}
\def\XG{{\mathfrak X}}    
\def\YG{{\mathfrak Y}}    
    \def\ZM{{\mathbb{Z}}}
    \def\AC{{\mathcal{A}}}
    \def\BC{{\mathcal{B}}}
    \def\CC{{\mathcal{C}}}
    \def\DC{{\mathcal{D}}}
    \def\FC{{\mathcal{F}}}
    \def\HC{{\mathcal{H}}}
    \def\JC{{\mathcal{J}}}
    \def\LC{{\mathcal{L}}}
    \def\MC{{\mathcal{M}}}
    \def\NC{{\mathcal{N}}}
    \def\OC{{\mathcal{O}}}
    \def\QC{{\mathcal{Q}}}
    \def\SC{{\mathcal{S}}}
    \def\TC{{\mathcal{T}}}
    \def\UC{{\mathcal{U}}}
    \def\VC{{\mathcal{V}}}
\def\Xti{{\tilde{X}}}
\def\a{\alpha}
\def\b{\beta}
\def\G{\Gamma}
\def\D{\Delta}
\def\e{\varepsilon}
\def\l{\lambda}
\def\t{\tau}
\DeclareMathOperator{\Aut}{{\mathrm{Aut}}}
\DeclareMathOperator{\codim}{codim}
\DeclareMathOperator{\Coker}{{\mathrm{Coker}}}
\DeclareMathOperator{\Cone}{Cone}
\DeclareMathOperator{\Gal}{{\mathrm{Gal}}}
\DeclareMathOperator{\Hom}{{\mathrm{Hom}}}
\DeclareMathOperator{\ic}{{\mathcal{IC}}}
\DeclareMathOperator{\Id}{{\mathrm{Id}}}
\DeclareMathOperator{\id}{{\mathrm{Id}}}
\DeclareMathOperator{\im}{{\mathrm{Im}}}
\DeclareMathOperator{\Irr}{{\mathrm{Irr}}}
\DeclareMathOperator{\Ker}{{\mathrm{Ker}}}
\DeclareMathOperator{\Loc}{{\mathrm{Loc}}}
\DeclareMathOperator{\RHOM}{R\underline{Hom}}
\DeclareMathOperator{\rg}{R\G}
\DeclareMathOperator{\Sh}{{\mathrm{Sh}}}
\DeclareMathOperator{\Soc}{Soc}
\DeclareMathOperator{\Spec}{{\mathrm{Spec}}}
\DeclareMathOperator{\tr}{{\mathrm{tr}}}
\DeclareMathOperator{\Top}{Top}
\DeclareMathOperator{\Tor}{{\mathrm{Tor}}}
\newcommand{\elem}[1]{\stackrel{#1}{\longto}}
\newcommand{\map}[1]{\stackrel{#1}{\to}}
\newcommand{\leftelem}[1]{\stackrel{#1}{\longfrom}}
\def\Imp{\Longrightarrow}
\def\Iff{\Longleftrightarrow}
\def\to{\rightarrow}
\def\longto{\longrightarrow}
\def\longfrom{\longleftarrow}
\def\injto{\hookrightarrow}
\def\onto{\twoheadrightarrow}
\def\rtordu{\rightsquigarrow}
\def\isom{\stackrel{\sim}{\to}}
\def\SS{\scriptstyle}
\def\lexp#1#2{\kern\scriptspace\vphantom{#2}^{#1}\kern-\scriptspace#2}
\mathchardef\inferieur="321E
\mathchardef\superieur="321F
\def\eqna{\begin{eqnarray*}}
\def\endeqna{\end{eqnarray*}}
\def\ie{{\textit{i}.\textit{e}.}}
\def\op{\text{op}}
\def\p{{}^p\!}
\def\pp{{}^{p_+}\!}
\def\0{{}^0\!}
\def\F{\FM\,}
\def\di{\mathrm{div}}
\def\tors{{\mathrm{tors}}}
\def\free{{\mathrm{free}}}
\def\mini{{\mathrm{min}}}
\def\jmin{{j_\mini}}
\def\io{{i_0}}
\def\reg{{\mathrm{reg}}}
\def\jreg{{j_\reg}}
\def\isubreg{{i_\subreg}}
\def\subreg{{\mathrm{subreg}}}
\def\Sing{{\mathrm{Sing}}}
\def\octa#1#2#3#4#5#6#7{
\xymatrix #7{
&&&\\
\\
&& #3 \ar[ddr] \ar[uur]\\
&&&&&\\
& #2 \ar[uur] \ar[dr] && #5 \ar[ddr] \ar[ur]\\
&& #4 \ar[drr] \ar[ur]\\
#1 \ar[uur] \ar[urr] &&&& #6 \ar[ddr] \ar[drr]\\
&&&&&&\\
&&&&&
}
}
\theoremstyle{plain}
\newtheorem{theorem}{Theorem}[section]
\newtheorem{proposition}[theorem]{Proposition}
\newtheorem{lemma}[theorem]{Lemma}
\newtheorem{corollary}[theorem]{Corollary}
\theoremstyle{definition}
\newtheorem{definition}[theorem]{Definition}
\newtheorem{assumption}[theorem]{Assumption}
\theoremstyle{remark}
\newtheorem{remark}[theorem]{Remark}
\newtheorem{example}[theorem]{Example}
\title{Decomposition numbers for perverse sheaves}
\author{Daniel Juteau}
\address{Mathematical Sciences Research Institute\\ 
17 Gauss Way\\
Berkeley, CA 94720 (USA)}
\email{juteau@math.jussieu.fr}
\begin{document}

\begin{abstract}
The purpose of this article is to set foundations for decomposition
numbers of perverse sheaves, to give some methods to calculate them in
simple cases, and to compute them concretely in two situations:
for a simple (Kleinian) surface singularity, and for the closure of the
minimal non-trivial orbit in a simple Lie algebra.

This work has applications to modular representation theory, for Weyl
groups using the nilpotent cone of the corresponding semisimple Lie
algebra, and for reductive algebraic group schemes using the affine
Grassmannian of the Langlands dual group.
\end{abstract}

\maketitle

\section{Introduction}

The purpose of this article is to set foundations for decomposition
numbers of perverse sheaves, to give some methods to calculate them in
simple cases, and to compute them concretely for simple and minimal
singularities.

We consider varieties over $\ov\FM_p$, and perverse sheaves with
coefficients in $\EM$, where $\EM$ is one of the rings in an
$\ell$-modular system $(\KM,\OM,\FM)$, where $\ell$ is a prime
different from $\ell$. These notions are explained in Subsection
\ref{subsec:context}.

Modular systems were introduced in modular
representation theory of finite groups. The idea is that we use a ring
of integers $\OM$ to go from a field $\KM$ of characteristic zero to a
field $\FM$ of characteristic $\ell$. For a finite group $W$, we define
the decomposition numbers $d^W_{EF}$, for $E \in \Irr\KM W$ and
$F \in \Irr\FM W$, by $d^W_{EF} = [\FM \otimes_\OM E_\OM : F]$,
where $\EM_\OM$ is a $W$-stable $\OM$-lattice in $E$ (this
multiplicity is well-defined). In many cases (for example, for the
symmetric group), the ordinary irreducibles (over $\KM$) are known,
but the modular ones (over $\FM$) are not. Then the problem of
determining the modular characters is equivalent to the problem of
determining the decomposition matrix $D^W=(d^W_{EF})$.

We can do the same for perverse sheaves on some variety $X$:
we can define decomposition
numbers $d^X_{(\OC,\LC),(\OC',\LC')}$, where $(\OC,\LC)$ and
$(\OC',\LC')$ are pairs consisting of smooth irreducible locally
closed subvariety and an irreducible $\EM$ local system on it, 
for $\EM = \KM$ or $\FM$ respectively. The simple perverse sheaves are
indexed by such pairs (if we fix a stratification, we take strata for
$\OC$ and $\OC'$). They are intersection cohomology complexes. As in
modular representation theory, one can take an integral form and
apply the functor of modular reduction $\FM \otimes_\OM^L -$.

In \cite{these}, it has been shown that the decomposition matrix of a
Weyl group can be extracted from a decomposition matrix for
equivariant perverse sheaves on the nilpotent cone. This required
to define a modular Springer correspondence, using a Fourier-Deligne
transform (I will explain this in a forthcoming article).

Thus it is very desirable to be able to calculate decomposition
numbers for equivariant perverse sheaves on the nilpotent cone. The singularity of
the nilpotent cone along the subregular orbit is a simple surface singularity
\cite{BRI,SLO1,SLO2}. At the other extreme, one can look at the
singularity of the closure of a minimal non-trivial nilpotent orbit at
the origin. These two cases are treated here.

On the other hand,
by the results of \cite{MV}, the decomposition numbers for a reductive
algebraic group scheme can be interpreted as decomposition numbers for
equivariant perverse sheaves on the affine Grassmannian of the Langlands dual
group. Moreover, most of the minimal degenerations of this 
(infinite-dimensional) variety are
simple or minimal singularities \cite{MOV}, so the calculations that
we carry out in this article can be used to recover some decomposition
numbers for reductive algebraic group schemes geometrically. This will be done in
another article, where we will also explain that one can go in the
other direction and prove geometric results using known decomposition
numbers.

In the author's opinion, perverse sheaves over rings of integers and
in positive characteristic, and their decomposition numbers, will
prove to be useful in many ways. For simple and minimal singularities,
we already have two different applications to modular representation theory.
So it seemed desirable to show how to calculate these decomposition
numbers independently of the framework of Springer correspondence.

Now let us give an outline of the article. Section \ref{sec:perverse}
contains the technical preliminaries. First, we set the context and
recall the definition of perverse sheaves over $\KM$, $\OM$, $\FM$. 
The treatment of $\OM$-coefficients in the standard reference \cite{BBD}
is done in two pages (\S\ 3.3). Over a field, the middle perversity
$p$ is self-dual, but here one has to consider two perversities, $p$
and $p_+$, exchanged by the duality. The cause of the trouble is
torsion. It seemed worthwhile to explain this construction in a more
general context.
Given an abelian category with a torsion theory,
there is a known procedure to construct another abelian category lying
inside the derived category \cite{HRS}. Our point of view is slightly
different: we start we a $t$-category, and we assume that its heart is
endowed with a torsion theory. Then we can construct a new
$t$-structure on the same triangulated category.
After recalling the notion of $t$-structure, we study the interaction
between torsion theories and $t$-structures. Then, we recall the
notion of recollement and its properties (most can be found in
\cite{BBD}), and we see how it interacts with torsion theories.
Then we see why the $t$-structure defining perverse sheaves is indeed
a $t$-structure, thus justifying the definition we recalled before.
In this context, we have functors of extension of scalars
$\KM\otimes_\OM -$ and of modular reduction $\FM\otimes^L_\OM -$.
One of the main technical points is that truncations do not commute
with modular reduction. We study carefully the failure of
commutativity of these functors, because this is precisely what will give
rise to non-trivial decomposition numbers, in the setting of
recollement. Then it is time to define these decomposition numbers for
perverse sheaves, and finally we deal with equivariance.

Since we can translate some problems of modular representation
theory in terms of decomposition numbers for perverse sheaves, it is
very important to be able to compute them. In general, it should be
very difficult. In Section \ref{sec:techniques}, we give some
techniques to compute them in certain cases. It is enough to determine
the intersection cohomology stalks over $\FM$ (in the applications,
they are usually known over $\KM$). In characteristic zero, a lot of
information can be obtained from the study of semi-small and small
proper separable morphisms. We explain what is still true in
characteristic $\ell$, but also why it is less useful, unless we have
a small resolution of singularities. Then we recall the notion of
$\EM$-smoothness, and we give some conditions which imply that some
decomposition numbers are zero. This is the simplest case, where the
intersection cohomology complex is just the constant sheaf (suitably
shifted). In general, we do not have many tools at our disposal, so
Deligne's construction, which works in any case, is very important to
do calculations in the modular setting. When we have an isolated cone
singularity, or more generally an isolated singularity on an affine
variety endowed with a $\GM_m$-action contracting to the origin, it is
much more likely to be handled. Finally, we recall the notion of
smooth equivalence of singularities. We can use the
results about a singularity to study a smoothly equivalent one. When
we deal only with constant local systems, this even gives all the
information. In general, one has to get extra information to determine
all the decomposition numbers.

In Section \ref{sec:simple}, we determine the decomposition numbers for
simple (or Kleinian) surface singularities. Their geometry has been studied a
lot. It is a nice illustration of the theory and techniques described
earlier to do this calculation, using geometrical results in the
literature. By a famous theorem of Brieskorn and Slodowy
\cite{BRI, SLO1, SLO2}, the singularity of the nilpotent cone of a
simple Lie algebra along the subregular orbit is a simple
singularity. This is an instance where we can determine all the
decomposition numbers, even for non constant local systems, using a
smooth equivalence of singularities, thanks to Slodowy's study of the
symmetries of the minimal resolutions of simple singularities (thus
giving a meaning to simple singularities of non-homogeneous type).

Finally, in Section \ref{sec:min}, we determine the decomposition
numbers for closures of minimal non-trivial orbits in simple Lie
algebras. Again, this is a nice illustration of the previous parts (it
is an isolated cone singularity). This result uses the determination
of the integral cohomology of the minimal orbit, which we obtained in
a previous article \cite{cohmin}.

%\newpage

\setcounter{tocdepth}{2}
\tableofcontents

\section{Perverse sheaves over $\KM$, $\OM$ and $\FM$}
\label{sec:perverse}

\subsection{Context}
\label{subsec:context}

In all this article, we fix on the one hand a prime number $p$
and an algebraic closure $\ov\FM_p$ of the prime field with $p$ elements,
and for each power $q$ of $p$, we denote by $\FM_q$ the unique subfield
of $\ov\FM_p$ with $q$ elements. On the other hand, we fix a prime number $\ell$
distinct from $p$, and a finite extension $\KM$ of the field $\QM_\ell$
of $\ell$-adic numbers, whose valuation ring we denote by $\OM$.
Let $\mG = (\varpi)$ be the maximal ideal of $\OM$, and let $\FM = \OM/\mG$
be its residue field (which is finite of characteristic $\ell$).
In modular representation theory, a triple such as $(\KM,\OM,\FM)$
is called an $\ell$-modular system. The letter $\EM$ will often be used
to denote either of these three rings.

Let $k$ denote $\FM_q$ or $\ov\FM_p$
(we could have taken the field $\CM$ of complex numbers instead, and
then we could have used arbitrary coefficients; however, for future
applications, we will need to treat the positive characteristic case,
with the étale topology). We will consider only
separated $k$-schemes of finite type, and morphisms of
$k$-schemes. Such schemes will be called varieties. If $X$ is a
variety, we will say ``$\EM$-sheaves on $X$'' for ``constructible
$\EM$-sheaves on $X$''.  We will denote by $\Sh(X,\EM)$ the noetherian
abelian category of $\EM$-sheaves on $X$, and by $\Loc(X,\EM)$ the
full subcategory of $\EM$-local systems on $X$. If $X$ is connected,
these correspond to the continuous representations of the étale
fundamental group of $X$ at any base point.

Let $D^b_c(X,\EM)$ be the bounded derived category of $\EM$-sheaves as
defined by Deligne.  The category $D^b_c(X,\EM)$ is triangulated, and
endowed with a $t$-structure whose heart is equivalent to the abelian
category of $\EM$-sheaves, because the following condition is
satisfied \cite{BBD,DELIGNE}.
\begin{equation}
\begin{array}{l}
\text{For each finite extension } k' \text{ of } k \text{ contained in } \ov\FM_p,\\
\text{the groups $H^i(\Gal(\ov\FM_p/k'),\ZM/\ell)$, $i\in\NM$, are finite.}
\end{array}
\end{equation}
We call this $t$-structure the \emph{natural} $t$-structure on
$D^b_c(X,\EM)$. The notion of $t$-structure will be recalled in the
next section. For triangulated categories and derived categories, we
refer to \cite{KS2,WEIBEL}.

We have internal operations $\otimes^\LM_\EM$ and $\RHOM$ on $D^b_c(X,\EM)$,
and, if $Y$ is another scheme, for $f : X \to Y$ a morphism we have triangulated functors
\begin{gather*}
f_!,\, f_* : D^b_c(X,\EM) \to D^b_c(Y,\EM)\\
f^*,\, f^! : D^b_c(Y,\EM) \to D^b_c(X,\EM)
\end{gather*}
We omit the letter $R$ which is normally used (\emph{e.g.} $Rf_*$,
$Rf_!$) meaning that we consider derived functors. For the functors
between categories of sheaves, we will use a $0$ superscript, as in
$\0 f_*$ and $\0 f_!$, following \cite{BBD}.

We will denote by 
\[
\DC_{X,\EM} : D^b_c(X,\EM)^\op \to D^b_c(X,\EM)
\]
the dualizing functor $\DC_{X,\EM} (-) = \RHOM(-,a^!\EM)$, where $a:X\to\Spec k$
is the structural morphism.

We have a modular reduction functor
$\FM \otimes^\LM_\OM (-) : D^b_c(X,\OM) \to D^b_c(X,\FM)$, which we will
simply denote by $\FM(-)$.
It is triangulated, and it commutes with the functors 
$f_!$, $f_*$, $f^*$, $f^!$ and the duality.
Moreover, it maps a torsion-free sheaf to a sheaf, and a torsion sheaf
to a complex concentrated in degrees $-1$ and $0$.

By definition, we have $D^b_c(X,\KM) = \KM \otimes_\OM D^b_c(X,\OM)$,
and $\Sh(X,\KM) = \KM \otimes_\OM \Sh(X,\OM)$. The functor
$\KM \otimes_\OM (-) : D^b_c(X,\OM) \to D^b_c(X,\KM)$ is exact.

In this section, we are going to recall the construction of the
perverse $t$-structure on $D^b_c(X,\EM)$ for the middle perversity $p$
(with two versions over $\OM$, where we have two perversities $p$ and
$p_+$ exchanged by the duality). We will recall the main points of the
treatment of $t$-structures and recollement of \cite{BBD}, to which we
refer for the details. However, in this work we emphasize the aspects
concerning $\OM$-sheaves, and we give some complements.

Before going through all these general constructions, let us already see
what these perverse sheaves are. They form an abelian full subcategory
$\p \MC(X,\EM)$ of $D^b_c(X,\EM)$. If $\EM$ is $\KM$ or $\FM$,
then this abelian category is artinian and noetherian, and its simple
objects are of the form $\p j_{!*}(\LC[\dim V)])$, where $j : V \to X$ is
the inclusion of a smooth irreducible subvariety, $\LC$ is an
irreducible locally constant constructible $\EM$-sheaf on $V$, and
$\p j_{!*}$ the intermediate extension functor. If $\EM = \OM$, this
abelian category is only noetherian.
In any case, $\p \MC(X,\EM)$ is the intersection of the full
subcategories
$\p D^{\leqslant 0}(X,\EM)$ and $\p D^{\geqslant 0}(X,\EM)$
of $D^b_c(X,\EM)$, where, if $A$ is a complex in $\DC^b_c(X,\EM)$, we have
\begin{multline}
\label{def:p <= 0}
A\in \p D^{\leqslant 0}(X,\EM)  \Iff
\text{for all points $x$ in $X$, }\\
\HC^n i_x^* A = 0
\text{ for all } n > - \dim(x)
\end{multline}
\begin{multline}
\label{def:p >= 0}
A\in \p D^{\geqslant 0}(X,\EM) \Iff
\text{for all points $x$ in $X$, }\\
\HC^n i_x^! A = 0
\text{ for all } n < - \dim(x)
\end{multline}
Here the points are not necessarily closed,
$i_x$ is the inclusion of $x$ into $X$, and
$\dim(x) = \dim \ov{\{x\}} = \deg\tr(k(x)/k)$.

The pair $(\p D^{\leqslant 0}, \p D^{\geqslant 0})$ is a $t$-structure
on $D^b_c(X,\EM)$, and $\p \MC(X,\EM)$ is its \emph{heart}.

When $\EM$ is a field (\ie\ $\EM = \KM$ or $\FM$),
the duality functor $\DC_{X,\EM}$ exchanges
$\p D^{\leqslant 0}(X,\EM)$ and $\p D^{\geqslant 0}(X,\EM)$,
so it induces a self-duality on $\p\MC(X,\EM)$.

However, when $\EM = \OM$, this is no longer true.
The perversity $p$ is not self-dual in this case. The duality
exchanges the $t$-structure defined by the middle perversity $p$ with
the $t$-structure
$(\pp D^{\leqslant 0}(X,\OM),\pp D^{\geqslant 0}(X,\OM))$
defined by
\begin{multline}
\label{def:p+ <= 0}
A\in \pp D^{\leqslant 0}(X,\OM) \Iff
\text{for all points $x$ in $X$, }\\
\begin{cases}
\HC^n i_x^* A = 0 \text{ for all } n > - \dim(x) + 1\\
\HC^{- \dim(x) + 1} i_x^* A \text{ is torsion}
\end{cases}
\end{multline}
\begin{multline}
\label{def:p+ >= 0}
A\in \pp D^{\geqslant 0}(X,\OM) \Iff
\text{for all points $x$ in $X$, }\\
\begin{cases}
\HC^n i_x^! A = 0 \text{ for all } n < - \dim(x)\\
\HC^{- \dim(x)} i_x^! A \text{ is torsion-free}
\end{cases}
\end{multline}
The definition of torsion (resp. torsion-free) objects is given in
Definition \ref{def:torsion}. 
We say that this $t$-structure is defined by the perversity $p_+$, and
that the duality exchanges $p$ and $p_+$. We denote by $\pp\MC(X,\OM)
= \pp D^{\leqslant 0}(X,\OM) \cap \pp D^{\geqslant 0}(X,\OM)$ the
heart of the $t$-structure defined by $p_+$, and we call its objects
$p_+$-perverse sheaves, or dual perverse sheaves. This abelian
category is only artinian.
The $t$-structures defined by $p$ and $p_+$ determine each other
(see \cite[\S 3.3]{BBD}). We have:

\begin{gather}
\label{p+ <= 0}
A \in \pp D^{\leqslant 0}(X,\OM) \Iff 
A \in \p D^{\leqslant 1}(X,\OM) \text{ and }
\p H^1 A \text{ is torsion}
\\
A \in \pp D^{\geqslant 0}(X,\OM) \Iff 
A \in \p D^{\geqslant 0}(X,\OM) \text{ and }
\p H^0 A \text{ is torsion-free}
\\
A \in \p D^{\leqslant 0}(X,\OM) \Iff 
A \in \pp D^{\leqslant 0}(X,\OM) \text{ and }
\pp H^0 A \text{ is divisible}
\\
A \in \p D^{\geqslant 0}(X,\OM) \Iff
A \in \pp D^{\geqslant -1}(X,\OM) \text{ and }
\pp H^{-1} A \text{ is torsion}
\end{gather}

If $A$ is $p$-perverse, then it is also $p_+$-perverse if and only if
$A$ is torsion-free in $\p\MC(X,\OM)$. If $A$ is $p_+$-perverse,
then $A$ is also $p$-perverse if and only if $A$ is divisible in
$\pp\MC(X,\OM)$.
Thus, if $A$ is both $p$- and $p_+$-perverse, then $A$ is
torsion-free in $\p\MC(X,\OM)$ and divisible in $\pp\MC(X,\OM)$.
The modular reduction of a $p$-perverse sheaf $A$ over $\OM$ will be a
perverse over $\FM$ if and only if $A$ is also $p_+$-perverse, and
\emph{vice versa}.

In the following, we will recall
why $(\p  D^{\leqslant 0}, \p  D^{\geqslant 0})$ (resp. the two versions
with $p$ and $p_+$ if $\EM = \OM$) is indeed a $t$-structure on
$D^b_c(X,\EM)$. We refer to \cite{BBD} for more details, however their treatment
of the case $\EM = \OM$ is quite brief, so we give some complements.
The rest of the section is organized as follows.

First, we recall the definition of $t$-categories and their main
properties.  Then we see how they can be combined with torsion
theories.  Afterwards, we recall the notion of recollement of
$t$-categories, stressing on some important properties, such as the
construction of the perverse extensions $\p j_!$, $\p j_{!*}$ and $\p
j_*$ with functors of truncation on the closed part. 
We also study the tops and socles of the extensions
$\p j_!$, $\p j_{!*}$ and $\p j_*$, and show that the intermediate
extension preserves multiplicities.
Then again, we study the
connection with torsion theories.  Already at this point, we have six
possible extensions (the three just mentioned, in the two versions $p$
and $p_+$). 

Then we leave the general context of $t$-structures and recollement
and we focus on perverse sheaves over $\EM = \KM$, $\OM$, $\FM$.
First, we see how the preceding general constructions show that the
definitions of perverse $t$-structures given above actually
give $t$-structures on the triangulated
categories $D^b_c(X,\EM)$, first fixing a stratification, and then
taking the limit. 
Now we have functors $\KM \otimes^\LM_\OM (-)$ and $\FM
\otimes^\LM_\OM (-)$ (it would be nice to treat this situation in an
axiomatic framework, maybe including duality).
We study the connection between modular reduction and truncation.
If we take a complex $A$ over
$\OM$, for each degree we have three places where we can truncate its
reduction modulo $\varpi$, because $\HC^i(\FM A)$ has pieces coming from
$\HC^i_\tors(A)$, $\HC^i_\free(A)$ and $\HC^{i + 1}_\tors(A)$.  So, in
a recollement situation, we have nine natural ways to truncate $\FM A$.

Finally, we introduce decomposition numbers for perverse sheaves, and
particularly in the $G$-equivariant setting. We have in mind, for example,
$G$-equivariant perverse sheaves on the nilpotent cone.

The relation between modular reduction and truncation is really one of
the main technical points. For example, the fact that
the modular reduction does not commute with the intermediate extension
means that the reduction of a simple perverse sheaf will not
necessarily be simple, that is, that we have can have non-trivial
decomposition numbers.

\subsection{$t$-categories}
\label{subsec:ts}

Let us begin by recalling the notion $t$-structure on a triangulated
category, introduced in \cite{BBD}.

\begin{definition}\label{def:ts}
A $t$-category is a triangulated category $\DC$, endowed with two strictly
full subcategories $\DC^{\leqslant 0}$ and $\DC^{\geqslant 0}$, such that,
if we let $\DC^{\leqslant n} = \DC^{\leqslant 0}[-n]$ and
$\DC^{\geqslant n} = \DC^{\geqslant 0}[-n]$, we have
\begin{enumerate}[(i)]
\item For $X$ in $\DC^{\leqslant 0}$ and $Y$ in $\DC^{\geqslant 1}$,
we have $\Hom_\DC(X,Y) = 0$.

\item $\DC^{\leqslant 0} \subset \DC^{\leqslant 1}$ and
$\DC^{\geqslant 0} \supset \DC^{\geqslant 1}$.

\item For each $X$ in $\DC$, there is a distinguished triangle $(A,X,B)$
in $\DC$ with $A$ in $\DC^{\leqslant 0}$ and $B$ in $\DC^{\geqslant 1}$.
\end{enumerate}

We also say that $(\DC^{\leqslant 0},\DC^{\geqslant 0})$ is a $t$-structure on $\DC$.
Its \emph{heart} is the full subcategory $\CC := \DC^{\leqslant 0} \cap \DC^{\geqslant 0}$.
\end{definition}

\begin{proposition}
\label{prop:truncations}
Let $\DC$ be a $t$-category.
\begin{enumerate}[(i)]
\item
\label{prop:truncations:adjunctions}
The inclusion of $\DC^{\leqslant n}$ (resp. $\DC^{\geqslant n}$) in $\DC$
has a right adjoint $\t_{\leqslant n}$ (resp. a left adjoint $\t_{\geqslant n}$).

\item For all $X$ in $\DC$, there is a unique
$d \in \Hom(\t_{\geqslant 1} X, \t_{\leqslant 0} X [1])$
such that the triangle
\[
\t_{\leqslant 0} X \longto X \longto \t_{\geqslant 1} X \elem{d}
\]
is distinguished. Up to unique isomorphism, this is the unique triangle
$(A,X,B)$ with $A$ in $\DC^{\leqslant 0}$ and $B$ in
$\DC^{\geqslant 1}$.

\item Let $a \leqslant b$. Then, for any $X$ in $\DC$, there is a
  unique morphism
  $\t_{\geqslant a} \t_{\leqslant b} X \to \t_{\leqslant b} \t_{\geqslant a} X$
  such that the following diagram is commutative.
\[
\xymatrix{
\t_{\leqslant b} X \ar[r] \ar[d] & X & \t_{\geqslant a} X \ar[l]\\
\t_{\geqslant a} \t_{\leqslant b} X \ar[rr]^\sim&& \t_{\leqslant b}
\t_{\geqslant a} X \ar[u]
}
\]
It is an isomorphism.
\end{enumerate}
\end{proposition}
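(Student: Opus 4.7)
The strategy follows the pattern of \cite[1.3]{BBD}: the mere existence asserted by axiom (iii) is to be upgraded to a canonical, functorial construction by systematic use of the orthogonality axiom (i), $\Hom(\DC^{\leqslant 0}, \DC^{\geqslant 1}) = 0$.

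For part (i), I would first establish uniqueness (up to unique isomorphism) of a triangle $(A, X, B)$ with $A \in \DC^{\leqslant n}$ and $B \in \DC^{\geqslant n+1}$. Given two such triangles $(A, X, B)$ and $(A', X, B')$, applying $\Hom(A, -)$ to the second triangle and using that $\Hom(A, B') = \Hom(A, B'[-1]) = 0$ by axiom (i), one finds that the map $A \to X$ lifts to a unique morphism $A \to A'$; by symmetry and the uniqueness of each lift, the composite $A \to A' \to A$ must equal the identity, so we obtain a canonical isomorphism. Setting $\t_{\leqslant n} X := A$, functoriality follows by the same lifting principle: for $f : X \to Y$, the composite $\t_{\leqslant n} X \to X \to Y \to \t_{\geqslant n+1} Y$ vanishes by axiom (i), so $\t_{\leqslant n} X \to Y$ factors uniquely through $\t_{\leqslant n} Y$. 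The adjunction $\Hom(A, X) = \Hom(A, \t_{\leqslant n} X)$ for $A \in \DC^{\leqslant n}$ is then immediate from the same argument. The functor $\t_{\geqslant n}$ is constructed dually.

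Part (ii) is largely a reformulation of what was done in (i): existence of the triangle is axiom (iii), and uniqueness of the pair $(\t_{\leqslant 0}X, \t_{\geqslant 1}X)$ together with its structure as a triangle was just established. The remaining point is that the connecting morphism $d$ is uniquely determined by the first two maps. This is where the proof is most delicate, since in a general triangulated category the third edge of a distinguished triangle is \emph{not} determined by the first two. Applying $\Hom(\t_{\geqslant 1} X, -)$ to the triangle itself and using axiom (i) to kill $\Hom(\t_{\geqslant 1} X, \t_{\leqslant 0} X)$ and $\Hom(\t_{\geqslant 1} X, X)$ (via the truncation triangle of $X$), one shows that $d$ is the unique preimage of $\id_{\t_{\geqslant 1}X}$ under the boundary map, hence is pinned down.

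For part (iii), I would apply $\t_{\leqslant b}$ to the distinguished triangle $\t_{\leqslant a-1} X \to X \to \t_{\geqslant a} X \to$. Since $a - 1 \leqslant b$, the object $\t_{\leqslant a-1} X$ already lies in $\DC^{\leqslant b}$, so after truncation the triangle reads $\t_{\leqslant a-1} X \to \t_{\leqslant b} X \to \t_{\leqslant b} \t_{\geqslant a} X \to$. A preliminary lemma (itself proved by the uniqueness from (i)) shows that $\t_{\leqslant b}$ preserves $\DC^{\geqslant a}$, so the third vertex lies in $\DC^{\leqslant b} \cap \DC^{\geqslant a}$. The uniqueness statement of (i) then identifies this triangle with the canonical decomposition of $\t_{\leqslant b} X$ with respect to $(\DC^{\leqslant a-1}, \DC^{\geqslant a})$, yielding the canonical isomorphism $\t_{\leqslant b} \t_{\geqslant a} X \isom \t_{\geqslant a} \t_{\leqslant b} X$. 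Commutativity of the displayed diagram and uniqueness of the morphism follow from naturality of the truncation maps, combined once more with the orthogonality of axiom~(i).

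The main obstacle is the uniqueness of $d$ in (ii), which in a general triangulated category would fail; here one must carefully exploit the special $\Hom$-vanishing provided by axiom (i) to rigidify the choice. Once this is in hand, everything else is a formal consequence of the orthogonality combined with the standard axioms of triangulated categories.
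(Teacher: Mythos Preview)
The paper does not supply its own proof of this proposition; it is stated as background material, with an implicit reference to \cite{BBD}, so there is nothing to compare against directly. Your outline follows the standard route of \cite[1.3.3--1.3.5]{BBD}, and parts (i) and the uniqueness-of-the-triangle half of (ii) are fine.

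There are, however, two genuine soft spots. First, your argument for the uniqueness of $d$ in (ii) does not work as written: you claim to kill $\Hom(\t_{\geqslant 1}X, X)$, but this group need not vanish (indeed it contains the section $\t_{\geqslant 1}X \to X$ whenever the triangle splits), and ``$d$ is the unique preimage of $\id$ under the boundary map'' is circular since the boundary map is postcomposition with $d$ itself. The correct argument is: given two completions $d,d'$, axiom TR3 extends $(\id,\id)$ to a morphism of triangles $(\id,\id,\varphi)$; then $(\varphi-\id)$ annihilates $X\to \t_{\geqslant 1}X$, hence factors through $d$ as $g\circ d$ with $g:\t_{\leqslant 0}X[1]\to \t_{\geqslant 1}X$; since $\t_{\leqslant 0}X[1]\in\DC^{\leqslant -1}$ and $\t_{\geqslant 1}X\in\DC^{\geqslant 1}$, axiom (i) forces $g=0$, whence $\varphi=\id$ and $d=d'\circ\varphi=d'$.

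Second, in (iii) you ``apply $\t_{\leqslant b}$ to the distinguished triangle''; but $\t_{\leqslant b}$ is not a triangulated functor, so this step is not legitimate as stated. What actually produces the triangle $\t_{\leqslant a-1}X \to \t_{\leqslant b}X \to \t_{\leqslant b}\t_{\geqslant a}X$ is the octahedral axiom applied to the composable pair $\t_{\leqslant a-1}X \to \t_{\leqslant b}X \to X$: the three cones are $\t_{\geqslant a}X$, $\t_{\geqslant b+1}X$, and the cone $C$ of the first map, and the octahedron yields a triangle $C\to \t_{\geqslant a}X \to \t_{\geqslant b+1}X$, which the uniqueness from (ii) then identifies with the truncation triangle of $\t_{\geqslant a}X$. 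Once you invoke the octahedron explicitly, the rest of your plan for (iii) goes through.
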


For example, if $\AC$ is an abelian category and $\DC$ is its derived category,
the natural $t$-structure on $\DC$ is the one for which $\DC^{\leqslant n}$ 
(resp. $\DC^{\geqslant n}$)
is the full subcategory of the complexes $K$ such that $H^i K = 0$ for
$i > n$ (resp. $i < n$). For $K = (K^i, d^i : K^i \to K^{i + 1})$ in $\DC$,
the truncated complex $\t_{\leqslant n} K$ is the subcomplex
$\cdots \to K^{n - 1} \to \Ker d^n \to 0 \to \cdots$ of $K$.
The heart is equivalent to the abelian category $\AC$ we started with.
Note that, in this case, the cone of a morphism $f : A \to B$ between
two objects of $\AC$ is a complex concentrated in degrees $-1$ and $0$.
More precisely, we have $H^{-1}(\Cone f) \simeq \Ker f$ and
$H^0(\Cone f) \simeq \Coker f$. In particular, we have a triangle
$(\Ker f [1],\ \Cone f,\ \Coker f)$.

If we abstract the relations between $\AC$ and $\DC(\AC)$, we get the notion
of admissible abelian subcategory of a triangulated category $\DC$, and a $t$-structure
on $\DC$ precisely provides an admissible abelian subcategory by taking the heart.

More precisely, let $\DC$ be a triangulated category and $\CC$ a full subcategory
of $\DC$ such that $\Hom^i(A,B) := \Hom(A,B[i])$ is zero for $i < 0$ and $A,B$ in $\CC$.
We have the following proposition, which results from the octahedron axiom.

\begin{proposition}
Let $f : X \to Y$ in $\CC$. We can complete $f$ into a distinguished triangle
$(X,Y,S)$. Suppose $S$ is in a distinguished triangle $(N[1],S,C)$ with $N$ and
$C$ in $\CC$. Then the morphisms $N \to S[-1] \to X$ and $Y \to S \to C$,
obtained by composition from the morphisms in the two triangles above, are respectively
a kernel and a cokernel for the morphism $f$ in $\CC$.
\end{proposition}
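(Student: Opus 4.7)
The plan is to verify directly that the morphisms
\[
k : N \xrightarrow{a} S[-1] \xrightarrow{b} X
\quad\text{and}\quad
c : Y \xrightarrow{v} S \xrightarrow{p} C
\]
(where $b$ is the shift of the third map in the triangle $(X,Y,S)$, and $a$, $p$ come from the triangle $(N[1],S,C)$) satisfy the universal properties of kernel and cokernel of $f$ inside $\CC$. The arguments for kernel and cokernel are dual, so I would write out the kernel case and indicate how to dualize.

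First I would check that $f \circ k = 0$ (and $c \circ f = 0$ dually): this is immediate because $S[-1] \xrightarrow{b} X \xrightarrow{f} Y$ consists of two consecutive arrows in the distinguished triangle $(X,Y,S)$ (rotated), so $f \circ b = 0$, hence $f \circ k = f \circ b \circ a = 0$.

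For existence in the kernel property, given $g : Z \to X$ with $Z \in \CC$ and $f \circ g = 0$, apply $\Hom_\DC(Z,-)$ to the rotated triangle $S[-1] \xrightarrow{b} X \xrightarrow{f} Y$ to lift $g$ to some $\tilde g : Z \to S[-1]$ with $b \circ \tilde g = g$. Then apply $\Hom_\DC(Z,-)$ to the shifted second triangle $N \xrightarrow{a} S[-1] \xrightarrow{p[-1]} C[-1]$: the obstruction to lifting $\tilde g$ to $N$ is the composition $p[-1] \circ \tilde g \in \Hom_\DC(Z,C[-1]) = \Hom^{-1}(Z,C)$, which vanishes by hypothesis. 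Hence there is $h : Z \to N$ with $a \circ h = \tilde g$, and then $k \circ h = b \circ a \circ h = b \circ \tilde g = g$.

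For uniqueness, it suffices to show that $k \circ h = 0$ forces $h = 0$. If $b \circ a \circ h = 0$, then $a \circ h$ lies in the kernel of $\Hom(Z,S[-1]) \to \Hom(Z,X)$, so it lifts to $\Hom(Z,Y[-1]) = \Hom^{-1}(Z,Y) = 0$; hence $a \circ h = 0$. Then $h$ itself lies in the kernel of $\Hom(Z,N) \to \Hom(Z,S[-1])$, which lifts to $\Hom(Z,C[-2]) = \Hom^{-2}(Z,C) = 0$; hence $h = 0$. The cokernel statement is obtained by applying the same argument to $\Hom_\DC(-,Z)$ on the appropriate rotated triangles, using the vanishing of $\Hom^{-1}(N,Z)$ and $\Hom^{-2}(N,Z)$. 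There is no real obstacle: the whole proof reduces to chasing two long exact sequences and invoking the negative-$\Ext$ vanishing twice on each side. The only subtlety to get right is keeping track of signs and shifts so that the composed morphisms really are those described in the statement; the octahedron axiom is not needed beyond its implicit role in the standard exactness properties of $\Hom(Z,-)$ on distinguished triangles.
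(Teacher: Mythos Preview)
Your proof is correct. The paper itself does not give a detailed argument: it simply asserts that the proposition ``results from the octahedron axiom.'' Your approach is genuinely different and in fact more elementary: you verify the universal properties of kernel and cokernel directly by chasing the long exact $\Hom(Z,-)$ and $\Hom(-,Z)$ sequences attached to the two given distinguished triangles, using only the standing hypothesis $\Hom^{<0}(\CC,\CC)=0$. This requires only axioms TR1--TR3 and not the octahedron. The octahedral route (as invoked by the paper, following \cite{BBD}) would instead build an octahedron from a suitable pair of composable maps and read off the kernel and cokernel factorizations from the new triangles it produces; this is more structural but also heavier machinery. Your direct argument has the merit of making transparent exactly which vanishings are used at each step (one $\Hom^{-1}$ for existence and one $\Hom^{-1}$ plus one $\Hom^{-2}$ for uniqueness, on each side), and it shows that the result already holds without invoking TR4.
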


Such a morphism will be called $\CC$-\emph{admissible}. In a distinguished triangle 
$X\elem{f} Y\elem{g} Z \elem{d}$ on objects in $\CC$, the morphisms
$f$ and $g$ are admissible, $f$ is a kernel of $g$, $g$ is a cokernel of $f$,
and $d$ is uniquely determined by $f$ and $g$. A short exact sequence
in $\CC$ will be called \emph{admissible} if it can be obtained
from a distinguished triangle in $\DC$ by suppressing the degree one morphism.

\begin{proposition}
Suppose $\CC$ is stable by finite direct sums. Then the following conditions are equivalent.
\begin{enumerate}[(i)]
\item $\CC$ is abelian, and its short exact sequences are admissible.

\item Every morphism of $\CC$ is $\CC$-admissible.
\end{enumerate}
\end{proposition}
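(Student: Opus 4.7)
The plan is to prove the two implications separately, and in each case the only substantive tool needed is the previous proposition together with the octahedron axiom.

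For $(ii)\Rightarrow(i)$, assume every morphism of $\CC$ is $\CC$-admissible. The previous proposition then supplies a kernel and a cokernel for every morphism in $\CC$. To upgrade this to abelianness, I would check the three remaining points: every monomorphism is the kernel of its cokernel, every epimorphism is the cokernel of its kernel, and every morphism factors as an epi followed by a mono. If $f : X\to Y$ is mono, then in the admissibility data $K=\ker f=0$, so the distinguished triangle completing $f$ reduces to $X\to Y\to C\to X[1]$ with $C=\coker f$, which visibly presents $f$ as a kernel of $Y\to C$; the epi case is strictly dual. For the factorization, I would apply admissibility to the kernel map $K\to X$ and the cokernel map $Y\to C$ of a general $f$, producing an intermediate object $I$ through which $f$ factors as $X\twoheadrightarrow I\hookrightarrow Y$. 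Finally, any short exact sequence $0\to A\to B\xrightarrow{p} C\to 0$ in $\CC$ is admissible because $p$ is admissible by hypothesis, and the kernel/cokernel characterization of its completing triangle forces that triangle to be $A\to B\to C\to A[1]$.

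For $(i)\Rightarrow(ii)$, given $f:X\to Y$ in the abelian category $\CC$, factor it through the image $X\xrightarrow{u}I\xrightarrow{v}Y$, where $u$ is epi with kernel $K=\ker f$ and $v$ is mono with cokernel $C=\coker f$. Admissibility of the two short exact sequences $0\to K\to X\xrightarrow{u}I\to 0$ and $0\to I\xrightarrow{v}Y\to C\to 0$ yields distinguished triangles
\[
K\longto X\elem{u} I\longto K[1],\qquad I\elem{v} Y\longto C\longto I[1].
\]
Applying the octahedron axiom to the composition $vu=f$ produces a distinguished triangle $\Cone(u)\to\Cone(f)\to\Cone(v)$, i.e.\ $K[1]\to\Cone(f)\to C$, with both $K$ and $C$ in $\CC$. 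This is exactly the data required to exhibit $f$ as $\CC$-admissible.

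The main obstacle, such as it is, lies in the $(ii)\Rightarrow(i)$ direction: one has to make sure that the ``image'' obtained from the kernel/cokernel construction of the previous proposition really does fit into two admissible short exact sequences and provides an honest epi--mono factorization, rather than just an isomorphism of abstract kernels and cokernels. Once $K=0$ (respectively $C=0$) is fed into the previous proposition and the resulting triangle is read off, everything else is bookkeeping, and the real content of both implications is concentrated in a single application of the octahedron axiom.
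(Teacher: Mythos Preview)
The paper does not actually prove this proposition; it is stated without proof as part of the background recalled from \cite{BBD} (where it is Proposition 1.2.4). Your outline is essentially the standard argument and is correct. The direction $(i)\Rightarrow(ii)$ is exactly as in \cite{BBD}: the octahedron applied to the epi--mono factorization $X\twoheadrightarrow I\hookrightarrow Y$ of $f$ yields the triangle $(K[1],\Cone(f),C)$ witnessing admissibility. For $(ii)\Rightarrow(i)$, your treatment of monos, epis, and of the admissibility of short exact sequences is fine; the only point you leave slightly implicit is the epi--mono factorization, and you rightly flag it. The cleanest way to fill it in is again the octahedron, applied to the composite $Y\to S\to C$ (where $(X,Y,S)$ and $(K[1],S,C)$ are the triangles from admissibility of $f$): this shows that $\ker(Y\to C)$ is a cone of $K\to X$, so the image and coimage of $f$ coincide and lie in $\CC$.
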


Now we can state the theorem that says that $t$-structures provide admissible
abelian categories.

\begin{theorem}
The heart $\CC$ of a $t$-category $\DC$ is an admissible abelian subcategory
of $\DC$, stable by extensions.
The functor
$H^0 := \t_{\geqslant 0} \t_{\leqslant 0} \simeq \t_{\leqslant 0}
\t_{\geqslant 0} : \DC \to \CC$ is
a cohomological functor.
\end{theorem}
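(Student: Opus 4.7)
The plan is to verify the four claims in order: (a) $\Hom^i(A,B) = 0$ for $i < 0$ and $A,B \in \CC$, (b) every morphism in $\CC$ is $\CC$-admissible (so by the previous proposition $\CC$ is abelian), (c) $\CC$ is stable by extensions in $\DC$, and (d) $H^0$ is a cohomological functor. Part (a) is immediate from axiom (i) of Definition \ref{def:ts}: for $A \in \CC \subset \DC^{\leqslant 0}$ and $B \in \CC \subset \DC^{\geqslant 0}$, the shift $B[i]$ lies in $\DC^{\geqslant -i} \subset \DC^{\geqslant 1}$ when $i < 0$, so $\Hom(A,B[i]) = 0$. Thus the hypothesis of the preceding proposition on $\CC \subset \DC$ is satisfied.

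The core step is (b). Given a morphism $f : X \to Y$ in $\CC$, complete it into a distinguished triangle $X \to Y \to S \elem{+1}$. First I show $S \in \DC^{\leqslant 0} \cap \DC^{\geqslant -1}$: the triangle presents $S$ as an extension of $Y \in \DC^{\leqslant 0}$ by $X[1] \in \DC^{\leqslant -1} \subset \DC^{\leqslant 0}$, and rotating, $S[-1]$ is an extension of $X \in \DC^{\geqslant 0}$ by $Y[-1] \in \DC^{\geqslant 1}$, so $S[-1] \in \DC^{\geqslant 0}$, i.e.\ $S \in \DC^{\geqslant -1}$; the fact that $\DC^{\leqslant n}$ and $\DC^{\geqslant n}$ are stable by extensions in triangles follows from axiom (i) applied to the connecting morphism. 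Now apply truncation to $S$: the distinguished triangle $\t_{\leqslant -1} S \to S \to \t_{\geqslant 0} S \elem{+1}$ has $\t_{\leqslant -1} S \in \DC^{\leqslant -1} \cap \DC^{\geqslant -1}$, so it equals $N[1]$ with $N \in \CC$, while $\t_{\geqslant 0} S \in \DC^{\geqslant 0} \cap \DC^{\leqslant 0} = \CC$. This is exactly the data $(N[1], S, C)$ in $\CC$ required by the previous proposition, so $f$ is $\CC$-admissible. By that proposition, $\CC$ is abelian.

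For (c), if $A \to B \to C \elem{+1}$ is distinguished with $A, C \in \CC$, then both $\DC^{\leqslant 0}$ and $\DC^{\geqslant 0}$ being stable by triangle extensions (as in the preceding paragraph) place $B$ in $\CC$. Finally, for (d), the isomorphism $\t_{\geqslant 0}\t_{\leqslant 0} \simeq \t_{\leqslant 0}\t_{\geqslant 0}$ is Proposition \ref{prop:truncations}(iii) with $a = b = 0$, and the resulting functor lands in $\CC$. To see $H^0$ is cohomological, given a distinguished triangle $X \to Y \to Z \elem{+1}$, it suffices to prove exactness of $H^0 X \to H^0 Y \to H^0 Z$ at the middle term, since the long exact sequence is then obtained by rotation. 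This in turn reduces, by applying the truncation functors and the standard $3 \times 3$ (nine) lemma for triangulated categories (a consequence of the octahedral axiom), to computing $H^0$ as $\t_{\geqslant 0}\t_{\leqslant 0}$ and tracking what happens when the truncation triangles are glued to the original one.

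The main obstacle is step (b): pinning down that $S$ lies in the narrow strip $\DC^{\leqslant 0} \cap \DC^{\geqslant -1}$ so that a single truncation produces both the kernel and cokernel candidates simultaneously, and then checking that the morphisms produced by the previous proposition really are the categorical kernel and cokernel of $f$ in $\CC$ (not merely in $\DC$). The cohomological property in (d) is then a fairly formal consequence of the octahedron, but writing it out cleanly requires a careful bookkeeping of the interaction of $\t_{\leqslant 0}$ and $\t_{\geqslant 0}$ with the connecting morphism.
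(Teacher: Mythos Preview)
The paper does not actually prove this theorem; it is stated as a recalled fact from \cite{BBD} (see the opening of \S\ref{subsec:ts}), so there is no paper-internal proof to compare against. Your outline follows the standard argument of \cite[Th\'eor\`eme 1.3.6]{BBD} and is essentially correct.

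Two minor points. In (b), closure of $\DC^{\leqslant 0}$ and $\DC^{\geqslant 0}$ under triangle-extensions does not follow from axiom~(i) alone: you need the orthogonality characterization $\DC^{\leqslant 0} = {}^\perp(\DC^{\geqslant 1})$ and $\DC^{\geqslant 0} = (\DC^{\leqslant -1})^\perp$, which in turn relies on the existence of truncations (Proposition~\ref{prop:truncations}). Once that is in hand, your argument that $S \in \DC^{\leqslant 0} \cap \DC^{\geqslant -1}$ and the subsequent truncation are fine; you should also note explicitly that $\t_{\leqslant -1} S \in \DC^{\geqslant -1}$ because $\t_{\leqslant -1} S \simeq \t_{\geqslant -1}\t_{\leqslant -1} S$ via Proposition~\ref{prop:truncations}(iii) applied to $S \in \DC^{\geqslant -1}$.

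For (d), your gesture toward the octahedron is the right idea, but the sentence ``tracking what happens when the truncation triangles are glued to the original one'' is not yet a proof. The clean route is: first reduce to $X,Y,Z \in \DC^{\leqslant 0}$ by applying $\t_{\leqslant 0}$ (which preserves triangles up to a controlled error in $\DC^{\geqslant 1}$, killed by $H^0$), then show that for a triangle in $\DC^{\leqslant 0}$ the sequence $H^0 X \to H^0 Y \to H^0 Z \to 0$ is exact by applying $\t_{\geqslant 0}$ and using the octahedron to compare $\t_{\leqslant -1}$ of the three objects. This is carried out in detail in \cite[1.3.6]{BBD}.
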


We have a chain of morphisms
\[
\cdots
\longto \t_{\leqslant i-2}
\longto \t_{\leqslant i-1}
\longto \t_{\leqslant i}
\longto \t_{\leqslant i+1}
\longto \cdots
\]
which can be seen as a ``filtration'' of the identity functor, with
``successive quotients'' the $H^i [-i]$. Thus we have distinguished
triangles:
\[
\t_{\leqslant i-1} \longto \t_{\leqslant i} \longto H^i [-i] \rightsquigarrow
\]
An object $A$ in $\DC$ can be seen as ``made of'' its cohomology
objects $H^i A$ (by successive extensions).
We depict this by the following diagram:
\[
\begin{array}{*{5}{|c}|}
\hline
\T\cdots
& H^{i-1}
& H^{i}
& H^{i+1}
& \cdots
\\
\hline
\multicolumn{1}{|c|}{\cdots}
&\multicolumn{1}{r|}{\SS\t_{\leq i - 1}}
&\multicolumn{1}{r|}{\SS\t_{\leq i}}
&\multicolumn{1}{r|}{\SS\t_{\leq i + 1}}
&\multicolumn{1}{c|}{\cdots}
\end{array}
\]
In the next sections, when we study the interplay between
$t$-structures and other structures (torsion theories, modular
reduction\ldots), we will see refinements of this ``filtration'', and
there will be more complicated pictures.

Now let $\DC_i$ ($i = 1,2$) be two $t$-categories, and let $\e_i : \CC_i \to \DC_i$
denote the inclusion functors of their hearts. Let $T : \DC_1 \to \DC_2$
be a triangulated functor. Then we say that $T$ is
right $t$-exact if $T(\DC_1^{\leqslant 0}) \subset \DC_2^{\leqslant 0}$,
left $t$-exact if $T(\DC_1^{\geqslant 0}) \subset \DC_2^{\geqslant 0}$,
and $t$-exact if it is both left and right $t$-exact.

\begin{proposition}%[Functors $\p T$: exactness and adjunction]
[Exactness and adjunction properties of the $\p T$]
\label{prop:pT}
\mbox{}
\begin{enumerate}
\item If $T$ is left (resp. right) $t$-exact, then the additive functor
$\p T := H^0 \circ T \circ \e_1$ is left (resp. right) exact.

\item Let $(T^*,T_*)$ be a pair of adjoint triangulated functors, with
$T^* : \DC_2 \to \DC_1$ and $T_* : \DC_1 \to \DC_2$.
Then $T^*$ is right $t$-exact if and only if $T_*$ is left $t$-exact,
and in that case $(\p T^*,\p T_*)$ is a pair of adjoint functors between
$\CC_1$ and $\CC_2$.
\end{enumerate}
\end{proposition}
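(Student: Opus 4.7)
The plan is to reduce both statements to standard manipulations with distinguished triangles, cohomological functors, and adjunctions between truncation functors, as provided by Proposition \ref{prop:truncations}.

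For part (1), I would begin with a short exact sequence $0 \to A \to B \to C \to 0$ in $\CC_1$. Since $\CC_1$ is an admissible abelian subcategory (by the theorem on hearts recalled just before), this short exact sequence lifts to a distinguished triangle $A \to B \to C \elem{+1}$ in $\DC_1$. Applying the triangulated functor $T$ yields a distinguished triangle $TA \to TB \to TC \elem{+1}$ in $\DC_2$, and applying the cohomological functor $H^0 : \DC_2 \to \CC_2$ gives the long exact sequence
\[
\cdots \to H^{-1}(TC) \to H^0(TA) \to H^0(TB) \to H^0(TC) \to H^1(TA) \to \cdots
\]
If $T$ is left $t$-exact, then $TA,TB,TC \in \DC_2^{\geqslant 0}$, so $H^{-1}(TC)=0$, yielding left exactness of $\p T$. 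If $T$ is right $t$-exact, then they lie in $\DC_2^{\leqslant 0}$, so $H^1(TA)=0$, yielding right exactness.

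For part (2), the first step is the equivalence of the two $t$-exactness conditions. The key tool is the characterization $X \in \DC^{\geqslant 0} \iff \Hom(Y,X)=0$ for every $Y \in \DC^{\leqslant -1}$ (and dually), which follows from axioms (i) and (iii) of Definition \ref{def:ts}. Assuming $T^*(\DC_2^{\leqslant 0}) \subset \DC_1^{\leqslant 0}$ and taking $X \in \DC_1^{\geqslant 0}$ and $Y \in \DC_2^{\leqslant -1}$, the adjunction gives
\[
\Hom_{\DC_2}(Y, T_* X) \simeq \Hom_{\DC_1}(T^* Y, X) = 0,
\]
so $T_* X \in \DC_2^{\geqslant 0}$; the reverse implication is symmetric.

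Finally, assuming both exactness properties, I would establish the adjunction $(\p T^*, \p T_*)$ by a direct computation. For $Y \in \CC_2$ and $X \in \CC_1$, right $t$-exactness gives $T^* Y \in \DC_1^{\leqslant 0}$, so $\p T^* Y = H^0 T^* Y = \t_{\geqslant 0} T^* Y$; dually, $T_* X \in \DC_2^{\geqslant 0}$ and $\p T_* X = \t_{\leqslant 0} T_* X$. Using the adjunctions of Proposition \ref{prop:truncations}(\ref{prop:truncations:adjunctions}) (together with $X \in \DC_1^{\geqslant 0}$ and $Y \in \DC_2^{\leqslant 0}$) and the given adjunction $(T^*,T_*)$, one strings together the isomorphisms
\[
\Hom_{\CC_1}(\p T^* Y, X) \simeq \Hom_{\DC_1}(T^* Y, X) \simeq \Hom_{\DC_2}(Y, T_* X) \simeq \Hom_{\CC_2}(Y, \p T_* X),
\]
and checks that they are functorial in $X$ and $Y$. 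The only mild subtlety — and the step most prone to confusion — is keeping track of which truncation is left versus right adjoint in each direction of the computation; once the correct pairing is in place the argument is formal.
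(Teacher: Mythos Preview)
Your proof is correct and is the standard argument for this result. Note, however, that the paper does not actually supply its own proof of this proposition: it is stated without proof as part of the background material recalled from \cite{BBD} (see Proposition 1.3.17 there), so there is nothing to compare against beyond confirming that your argument is the expected one.
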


\subsection{Torsion theories and $t$-structures}
\label{subsec:tt ts}

We will give some variations of known results \cite{HRS}.

\begin{definition}\label{def:tt}
Let $\AC$ be an abelian category. A torsion theory on $\AC$ is a pair
$(\TC, \FC)$ of full subcategories such that
\begin{enumerate}[(i)]
\item \label{eq:hom torsion theory}
for all objects $T$ in $\TC$ and $F$ in $\FC$, we have
\begin{equation*}
\Hom_\AC(T,F) = 0
\end{equation*}
\item \label{eq:ses torsion theory}
for any object $A$ in $\AC$, there are objects $T$ in $\TC$ and $F$ in $\FC$
such that there is a short exact sequence
\begin{equation*}
0 \longto T \longto A \longto F \longto 0
\end{equation*}
\end{enumerate}
\end{definition}

Let us first give some elementary properties of torsion theories.

\begin{proposition}
\label{prop:properties tt}
Let $\AC$ be an abelian category endowed with a torsion theory
$(\TC,\FC)$. Then the following hold:
\begin{enumerate}[(i)]
\item
\label{it:adjoint tt}
The inclusion of $\TC$ (resp. $\FC$) in $\AC$
has a right adjoint  $(-)_\tors : \AC \to \TC$
(resp. a left adjoint $(-)_\free : \AC \to \FC$).

\item
\label{it:ortho tt}
We have
\begin{gather*}
\FC = \TC^\perp = \{F\in\CC \mid \forall T \in \TC,\ \Hom_\CC(T,F) = 0\}
\\
\TC = {}^\perp\FC = \{T\in\CC \mid \forall F \in \FC,\ \Hom_\CC(T,F) = 0\}
\end{gather*}

\item
\label{it:stab tt}
The torsion class $\TC$ (resp. the torsion-free class $\FC$) is
closed under quotients and extensions (resp. under subobjects and
extensions).
\end{enumerate}
\end{proposition}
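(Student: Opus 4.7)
The plan is to derive all three statements from axioms (\ref{eq:hom torsion theory}) and (\ref{eq:ses torsion theory}) of a torsion theory, in the order (i), (ii), (iii), since the short exact sequence $0 \to T \to A \to F \to 0$ supplied by axiom (\ref{eq:ses torsion theory}) is the only tool available and its universal property (established in (i)) immediately powers the other two.

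For (i), I start with a decomposition $0 \to T \to A \to F \to 0$ provided by axiom (\ref{eq:ses torsion theory}), and I claim that $A \mapsto T$ gives the desired right adjoint. It suffices to check that for any $S \in \TC$, the map $\Hom_\TC(S,T) \to \Hom_\AC(S,A)$ induced by $T \hookrightarrow A$ is a bijection. Surjectivity: given $\varphi \in \Hom_\AC(S,A)$, the composite $S \to A \to F$ lies in $\Hom_\AC(S,F) = 0$ by axiom (\ref{eq:hom torsion theory}), hence $\varphi$ factors through $\ker(A \to F) = T$. Injectivity is automatic from $T \hookrightarrow A$ being monic. As a byproduct, the decomposition is unique up to canonical isomorphism, so $(-)_\tors$ is truly functorial. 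The argument for $(-)_\free$ as left adjoint to $\FC \hookrightarrow \AC$ is dual.

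For (ii), the inclusion $\FC \subseteq \TC^\perp$ is a restatement of axiom (\ref{eq:hom torsion theory}). Conversely, if $X \in \TC^\perp$, write the decomposition $0 \to X_\tors \to X \to X_\free \to 0$. The monomorphism $X_\tors \hookrightarrow X$ belongs to $\Hom_\AC(X_\tors,X) = 0$, so $X_\tors = 0$ and $X \cong X_\free \in \FC$. The equality $\TC = {}^\perp\FC$ is proved symmetrically. For (iii), closure of $\TC$ under quotients: given an epimorphism $T \twoheadrightarrow Q$ with $T \in \TC$, decompose $0 \to Q_\tors \to Q \to Q_\free \to 0$; the composite $T \to Q \to Q_\free$ is zero by axiom (\ref{eq:hom torsion theory}), and since $T \twoheadrightarrow Q$ is epi, $Q \to Q_\free$ itself vanishes, forcing $Q_\free = 0$ and $Q \in \TC$. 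Closure under extensions: for $0 \to T_1 \to E \to T_2 \to 0$ with $T_i \in \TC$, the composite $T_1 \to E \to E_\free$ vanishes, so $E \to E_\free$ factors through a morphism $T_2 \to E_\free$, which is itself zero by axiom (\ref{eq:hom torsion theory}); hence $E \to E_\free = 0$ and $E \in \TC$. The corresponding statements for $\FC$ follow by duality.

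The hard part will be purely rhetorical rather than mathematical: there is essentially no obstacle since the entire proof is a routine exploitation of the hom-vanishing axiom, and the only thing to be careful about is recording that in each argument a morphism whose source lies in $\TC$ and target lies in $\FC$ (or a subobject/quotient of such) is forced to vanish, and that the decomposition in axiom (\ref{eq:ses torsion theory}) is a genuine kernel-cokernel pair that can be used as such.
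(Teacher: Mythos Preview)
Your proof is correct and complete. The paper itself does not supply a proof of this proposition: it introduces it with the sentence ``Let us first give some elementary properties of torsion theories'' and then states the result without argument, treating it as standard. Your approach --- extracting the adjoint from the decomposition of axiom~(\ref{eq:ses torsion theory}) via the vanishing axiom~(\ref{eq:hom torsion theory}), then bootstrapping (ii) and (iii) from the resulting functoriality --- is exactly the standard one, and each step is justified.
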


\begin{definition}
A torsion theory $(\TC,\FC)$ on an abelian category $\AC$ is said to
be \emph{hereditary} (resp. \emph{cohereditary}) if the torsion class $\TC$
(resp. the torsion-free class $\FC$) is closed under subobjects
(resp. under quotients).
\end{definition}

Examples of torsion theories arise with $\OM$-linear abelian categories.

\begin{definition}
\label{def:torsion}
Let $\AC$ be an $\OM$-linear abelian category. An object $A$ in $\AC$ is \emph{torsion}
if $\varpi^N 1_A$ is zero for some $N \in \NM$, and it is
\emph{torsion-free} (resp. \emph{divisible}) if $\varpi.1_A$ is a monomorphism
(resp. an epimorphism).
\end{definition}

\begin{proposition}\label{prop:hom tors div}
Let $\AC$ be an $\OM$-linear abelian category.
\begin{enumerate}[(i)]
\item If $T \in \AC$ is torsion and $F \in \AC$ is torsion-free, then we have
\begin{equation*}
\Hom_\AC(T,F) = 0
\end{equation*}
\item If $Q \in \AC$ is divisible and $T \in \AC$ is torsion, then we have
\begin{equation*}
\Hom_\AC(Q,T) = 0
\end{equation*}
\end{enumerate}
\end{proposition}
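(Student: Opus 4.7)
The plan is to reduce both statements to simple diagrammatic arguments using the definitions: torsion means $\varpi^N 1_A = 0$ for some $N$, torsion-free means $\varpi \cdot 1_A$ is a monomorphism, and divisible means $\varpi \cdot 1_A$ is an epimorphism. The key observation in both cases is that for any morphism $f : A \to B$, we have $\varpi \cdot f = f \circ \varpi 1_A = \varpi 1_B \circ f$ by $\OM$-linearity, so multiplication by $\varpi$ commutes through $f$.

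For part (i), I would take a morphism $f : T \to F$ with $T$ torsion and $F$ torsion-free. Pick $N$ large enough that $\varpi^N 1_T = 0$. Then
\[
\varpi^N 1_F \circ f = f \circ \varpi^N 1_T = 0.
\]
Since $F$ is torsion-free, $\varpi 1_F$ is a monomorphism, hence so is $\varpi^N 1_F$ (composition of monos). Thus $f = 0$.

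For part (ii), with $f : Q \to T$, $Q$ divisible and $T$ torsion, I would pick $N$ so that $\varpi^N 1_T = 0$ and write
\[
f \circ \varpi^N 1_Q = \varpi^N 1_T \circ f = 0.
\]
Since $Q$ is divisible, $\varpi 1_Q$ is an epimorphism, hence $\varpi^N 1_Q$ is an epimorphism, and cancellation yields $f = 0$.

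There is no real obstacle here; both parts are formal manipulations of the defining conditions together with $\OM$-linearity of $\Hom$. The only subtle point is to make sure that being a mono (resp. epi) is preserved under composition, which is automatic. This justifies calling the pair (torsion, torsion-free) a torsion theory on any $\OM$-linear abelian category where every object admits the standard torsion short exact sequence, which is the setting later applied to $\Sh(X,\OM)$ and $\p\MC(X,\OM)$.
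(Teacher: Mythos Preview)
Your proof is correct and essentially identical to the paper's own argument: both parts use the commutation $\varpi^N 1_B \circ f = f \circ \varpi^N 1_A$ together with the fact that $\varpi^N 1_F$ is a monomorphism (resp.\ $\varpi^N 1_Q$ is an epimorphism) to cancel and conclude $f=0$. Your added remark that compositions of monomorphisms (resp.\ epimorphisms) remain such is the only detail the paper leaves implicit.
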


\begin{proof}
\emph{(i)}\ Let $f\in \Hom_\AC(T,F)$.
Let $N \in \NM$ such that $\varpi^N 1_T = 0$. Then we have
$(\varpi^N 1_F) \circ f = f \circ (\varpi^N.1_T) = 0$, and consequently $f = 0$,
since $\varpi^N 1_F$ is a monomorphism.

\medskip

\emph{(ii)}\ Let $g\in \Hom_\AC(Q,T)$.
Let $N \in \NM$ such that $\varpi^N 1_T = 0$. Then we have
$g \circ (\varpi^N 1_Q) = (\varpi^N 1_T) \circ g = 0$, and consequently $g = 0$,
since $\varpi^N 1_Q$ is an epimorphism.
\end{proof}

\begin{proposition}
\label{prop:hered}
Let $\AC$ be an $\OM$-linear abelian category. Then subobjects and
quotients of torsion objects are torsion objects.
\end{proposition}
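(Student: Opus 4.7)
The statement is elementary, so the plan is short. The key observation is that in any $\OM$-linear category, multiplication by $\varpi^N$ is natural: for every morphism $f : A \to B$ one has $f \circ (\varpi^N 1_A) = (\varpi^N 1_B) \circ f$, since $\OM$ acts through the center of the category.

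I would argue the two cases separately. Suppose $T$ is torsion, so $\varpi^N 1_T = 0$ for some $N \in \NM$. For a subobject, take a monomorphism $i : S \hookrightarrow T$. Then
\[
i \circ (\varpi^N 1_S) = (\varpi^N 1_T) \circ i = 0,
\]
and since $i$ is a monomorphism we may cancel it on the left to obtain $\varpi^N 1_S = 0$, so $S$ is torsion. For a quotient, take an epimorphism $q : T \twoheadrightarrow Q$. Then
\[
(\varpi^N 1_Q) \circ q = q \circ (\varpi^N 1_T) = 0,
\]
and cancelling the epimorphism $q$ on the right gives $\varpi^N 1_Q = 0$, so $Q$ is torsion.

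There is no real obstacle: the argument is exactly the same pattern as the proof of Proposition~\ref{prop:hom tors div} already given just above, just applied to $1_S$ and $1_Q$ instead of an arbitrary $f$. One only needs to observe that the same $N$ that kills $1_T$ works for $1_S$ and $1_Q$, which is immediate from the cancellation by $i$ and $q$ respectively.
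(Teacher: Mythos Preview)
Your proof is correct and is essentially identical to the paper's own argument: both use naturality of $\varpi^N$ to get $i \circ (\varpi^N 1_S) = 0$ and $(\varpi^N 1_Q)\circ q = 0$, then cancel the monomorphism and epimorphism respectively. The only difference is notational (the paper names the quotient $U$ rather than $Q$).
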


\begin{proof}
Let $T$ be a torsion object in $\AC$. We can choose an integer $N$
such that $\varpi^N 1_T = 0$.

If $i:S\injto T$ is a subobject, then we have
$i\circ (\varpi^N 1_S) = (\varpi^N 1_T) \circ i = 0$, hence
$\varpi^N 1_S = 0$ since $i$ is a monomorphism.
Thus $S$ is torsion.

If $q:T\onto U$ is a quotient, then we have
$(\varpi^N 1_U) \circ q = q\circ (\varpi^N 1_T) = 0$, hence
$\varpi^N 1_U = 0$ since $q$ is an epimorphism.
Thus $U$ is torsion.
\end{proof}

\begin{proposition}
\label{prop:tors div}
Let $A$ be an object in an $\OM$-linear abelian category $\AC$.
\begin{enumerate}[(i)]
\item If $A$ is noetherian, then $A$ has a greatest torsion subobject
$A_\tors$, the quotient $A/A_\tors$ is torsion-free
and $\KM A \simeq \KM A/A_\tors$.

\item If $A$ is artinian, then $A$ has a greatest divisible
subobject $A_\di$, the quotient $A/A_\di$ is a torsion object
and we have $\KM A \simeq \KM A_\di$.
\end{enumerate}
\end{proposition}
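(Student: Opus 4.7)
The plan is to construct $A_\tors$ (respectively $A_\di$) as the stabilization point of an ascending chain of kernels (respectively a descending chain of images) of multiplication by powers of $\varpi$, then to verify maximality, the saturation property, and the localization assertion using the $\OM$-linear structure.

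For part (i), noetherianity makes the ascending chain
\[
\Ker(\varpi 1_A) \subseteq \Ker(\varpi^2 1_A) \subseteq \cdots
\]
stabilize at some integer $N$, and I would set $A_\tors := \Ker(\varpi^N 1_A)$, which is torsion by construction. Given any torsion subobject $i : T \hookrightarrow A$ with $\varpi^M 1_T = 0$, the relation $\varpi^M \circ i = i \circ (\varpi^M 1_T) = 0$ forces $i$ to factor through $\Ker(\varpi^M 1_A)$, which is contained in $A_\tors$ (using stabilization to absorb the case $M > N$). Hence $A_\tors$ is the greatest torsion subobject. To show $A/A_\tors$ is torsion-free, i.e.\ $\Ker(\varpi 1_{A/A_\tors}) = 0$, I take its preimage $P \subseteq A$ under the projection $\pi : A \to A/A_\tors$: the condition $\varpi P \subseteq A_\tors$ then yields $\varpi^{N+1} P = 0$, so $P \subseteq \Ker(\varpi^{N+1} 1_A) = A_\tors$ by stabilization, whence $P/A_\tors = 0$.

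Part (ii) is handled by the dual argument. Artinianity stabilizes the descending chain of images $\im(\varpi^n 1_A)$ at some integer $N$, and I set $A_\di := \im(\varpi^N 1_A)$. The identity $\varpi \cdot A_\di = \im(\varpi^{N+1} 1_A) = A_\di$ shows $\varpi 1_{A_\di}$ is an epimorphism, so $A_\di$ is divisible. For any divisible subobject $Q \hookrightarrow A$, one has $Q = \varpi^n Q \subseteq \im(\varpi^n 1_A)$ for every $n$, hence $Q \subseteq A_\di$. The quotient $A/A_\di$ is annihilated by $\varpi^N$ because $\varpi^N A \subseteq A_\di$, so it is torsion.

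Finally, both isomorphisms $\KM A \simeq \KM(A/A_\tors)$ and $\KM A \simeq \KM A_\di$ follow by applying the exact functor $\KM \otimes_\OM (-)$ to the short exact sequences $0 \to A_\tors \to A \to A/A_\tors \to 0$ and $0 \to A_\di \to A \to A/A_\di \to 0$: the torsion term is killed in the $\KM$-linearization since multiplication by $\varpi^N$ on it is simultaneously zero and invertible. The only place that demands a little care — and the closest thing to an obstacle — is the torsion-freeness of $A/A_\tors$, since it requires a pullback/subobject manipulation rather than a direct consequence of stabilization, but this is a routine diagram chase in an abelian category.
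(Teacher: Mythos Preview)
Your proof is correct and follows essentially the same route as the paper: both construct $A_\tors$ and $A_\di$ as the stabilization of the chains $\Ker(\varpi^n 1_A)$ and $\im(\varpi^n 1_A)$, verify the torsion/divisibility and saturation properties via the stabilization identity, and deduce the $\KM$-statements by exactness of $\KM\otimes_\OM(-)$. Your ``preimage $P$'' argument for torsion-freeness of $A/A_\tors$ is exactly the paper's computation $\Ker(\varpi 1_{A/A_\tors}) = \Ker(\varpi^{N+1}1_A)/\Ker(\varpi^N 1_A) = 0$, just rephrased; and you actually supply the maximality of $A_\di$ among divisible subobjects, which the paper's proof leaves implicit.
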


\begin{proof}
In the first case,
the increasing sequence $\Ker \varpi^n.1_A$
of subobjects of $A$ must
stabilize, so there is an integer $N$ such that
$\Ker \varpi^n.1_A = \Ker \varpi^N.1_A$ for all $n \geqslant N$.
We set $A_\tors := \Ker \varpi^N.1_A$. This is clearly a torsion object,
since it is killed by $\varpi^N$. Now let $T$ be a torsion subobject
of $A$. It is killed by some $\varpi^k$, and we can assume
$k \geqslant N$. Thus $T \subset \Ker \varpi^k.1_A = \Ker \varpi^N.1_A = A_\tors$.
This shows that $A_\tors$ is the greatest torsion subobject of $A$.
We have
\[
\Ker \varpi.1_{A/A_\tors} = \Ker \varpi^{N+1}.1_A / \Ker \varpi^N.1_A = 0
\]
which shows that $A/A_\tors$ is torsion-free. Applying the exact functor
$\KM \otimes_\OM -$ to the short exact sequence
$0 \to A_\tors \to A \to A/A_\tors \to 0$,
we get $\KM A \simeq \KM A/A_\tors$.

In the second case, the decreasing sequence $\im \varpi^n.1_A$
of subobjects of $A$
must stabilize, so there is an integer $N$ such that
$\im \varpi^n.1_A = \im \varpi^N.1_A$ for all $n \geqslant N$.
We set $A_\di := \im \varpi^N.1_A$. We have
$\im \varpi.1_{A_\di} = \im \varpi^{N+1}.1_A = \im \varpi^N.1_A = A_\di$,
thus $A_\di$ is divisible. We have
\[
\im \varpi^n.1_{A/A_\di} = \im \varpi^n.1_A / \im \varpi^N.1_A = 0
\]
for $n \geqslant N$. Hence $A/A_\di$ is a torsion object.
Applying the exact functor
$\KM \otimes_\OM -$ to the short exact sequence
$0 \to A_\di \to A \to A/A_\di \to 0$,
we get $\KM A_\di \simeq \KM A$.
\end{proof}

\begin{proposition}
\label{prop:torsion theory tors div}
Let $\AC$ be an $\OM$-linear abelian category.
We denote by $\TC$ 
\textup{(}resp. $\FC$, $\QC$\textup{)} 
the full subcategory
of torsion
\textup{(}resp. torsion-free, divisible\textup{)} 
objects in $\AC$.
If $\AC$ is noetherian
\textup{(}resp. artinian\textup{)},
then $(\TC, \FC)$
\textup{(}resp. $(\QC,\TC)$\textup{)}
is an hereditary
\textup{(}resp. cohereditary\textup{)}
torsion theory on $\AC$.
\end{proposition}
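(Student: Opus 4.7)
The plan is to verify the two axioms of Definition~\ref{def:tt} in each case by quoting the preceding three propositions, and then to check the (co)hereditary property from Proposition~\ref{prop:hered}.

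\medskip

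First I would treat the noetherian case. Axiom~\emph{(i)} of Definition~\ref{def:tt} (vanishing of $\Hom_\AC(T,F)$ for $T$ torsion and $F$ torsion-free) is exactly Proposition~\ref{prop:hom tors div}\emph{(i)}. Axiom~\emph{(ii)} (existence, for every $A \in \AC$, of a short exact sequence $0 \to T \to A \to F \to 0$ with $T \in \TC$ and $F \in \FC$) is furnished by Proposition~\ref{prop:tors div}\emph{(i)}, which under the noetherian hypothesis produces the subobject $A_\tors$ and the torsion-free quotient $A/A_\tors$. This establishes that $(\TC,\FC)$ is a torsion theory. To see it is hereditary, I invoke Proposition~\ref{prop:hered}: any subobject of a torsion object is again torsion, which is the defining condition.

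\medskip

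For the artinian case I would proceed symmetrically, with the roles of torsion and divisible swapped, but one has to be careful that $\QC$ now plays the role of the torsion class. Axiom~\emph{(i)} amounts to $\Hom_\AC(Q,T) = 0$ for $Q$ divisible and $T$ torsion, which is Proposition~\ref{prop:hom tors div}\emph{(ii)}. Axiom~\emph{(ii)} is supplied by Proposition~\ref{prop:tors div}\emph{(ii)}, giving the exact sequence
\[
0 \longto A_\di \longto A \longto A/A_\di \longto 0
\]
with $A_\di \in \QC$ and $A/A_\di \in \TC$, valid because $A$ is artinian. So $(\QC,\TC)$ is a torsion theory. Its torsion-free class is $\TC$; cohereditary means that $\TC$ is closed under quotients, which is again Proposition~\ref{prop:hered} (quotients of torsion objects are torsion).

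\medskip

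There is no real obstacle here: every ingredient has already been isolated in the preceding three propositions, and the proof is a matter of matching each axiom to the appropriate reference and keeping straight which of $\TC$ or $\QC$ plays the role of the torsion class. The one point worth flagging for the reader is precisely this bookkeeping in the second case, where the ``torsion-free'' class of the torsion theory $(\QC,\TC)$ is the class $\TC$ of $\OM$-torsion objects, so that the cohereditary property is the closure of $\TC$ under quotients rather than a statement about divisibility.
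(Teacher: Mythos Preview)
Your proof is correct and follows exactly the approach the paper takes: the paper's own proof is a one-line appeal to Propositions~\ref{prop:hom tors div}, \ref{prop:hered}, and \ref{prop:tors div}, and you have spelled out precisely how each axiom and the (co)hereditary property match up with those references.
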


\begin{proof}
This follows from Propositions \ref{prop:hom tors div},
\ref{prop:hered} and \ref{prop:tors div}.
\end{proof}

We want to discuss the combination of $t$-structures with torsion theories.

\begin{proposition}
\label{prop:tt ts}
Let $\DC$ be a triangulated category endowed with a $t$-structure
$(\p \DC^{\leqslant 0}, \p \DC^{\geqslant 0})$. Let us denote its
heart by $\CC$, the truncation functors by $\p \t_{\leqslant i}$ and
$\p \t_{\geqslant i}$, and the cohomology functors by $\p H^i : \DC
\to \CC$. Suppose that $\CC$ is endowed with a torsion theory $(\TC,\FC)$.
Then we can define a new $t$-structure
$(\pp \DC^{\leqslant 0}, \pp \DC^{\geqslant 0})$ on $\DC$ by
\[
\pp \DC^{\leqslant 0} = \{A \in \p \DC^{\leqslant 1} \mid  \p H^1(A) \in \TC\}
\]
\[
\pp \DC^{\geqslant 0} = \{A \in \p \DC^{\geqslant 0} \mid  \p H^0(A) \in \FC\}
\]
\end{proposition}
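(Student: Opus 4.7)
The plan is to verify, in order, the three axioms of Definition \ref{def:ts} for the pair $(\pp\DC^{\leqslant 0},\pp\DC^{\geqslant 0})$. First let me unwind: shifting shows that $\pp\DC^{\geqslant 1} = \{A \in \p\DC^{\geqslant 1} : \p H^1(A) \in \FC\}$, which is what I will use throughout.

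Axiom (ii) is immediate: if $A \in \p\DC^{\leqslant 1}$ then $\p H^2(A) = 0$, and $0 \in \TC$ since $\TC$ is the image of a right adjoint (Proposition \ref{prop:properties tt}\emph{(i)}); similarly $0 \in \FC$, giving $\pp\DC^{\leqslant 0} \subset \pp\DC^{\leqslant 1}$ and $\pp\DC^{\geqslant 1} \subset \pp\DC^{\geqslant 0}$.

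For axiom (i), given $A \in \pp\DC^{\leqslant 0}$ and $B \in \pp\DC^{\geqslant 1}$, I would apply $\Hom(-,B)$ to the old truncation triangle $\p\t_{\leqslant 0} A \to A \to \p H^1(A)[-1] \to$ (available because $A \in \p\DC^{\leqslant 1}$). The term $\Hom(\p\t_{\leqslant 0}A, B)$ vanishes by axiom (i) of the original $t$-structure since $B \in \p\DC^{\geqslant 1}$, so I am reduced to showing $\Hom(\p H^1(A)[-1], B) = 0$. Applying $\Hom(\p H^1(A)[-1], -)$ to the triangle $\p H^1(B)[-1] \to B \to \p\t_{\geqslant 2} B \to$, the right end vanishes by degree and the remaining term collapses to $\Hom_\CC(\p H^1(A), \p H^1(B))$, which is zero by the defining property of the torsion theory since $\p H^1(A) \in \TC$ and $\p H^1(B) \in \FC$.

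Axiom (iii) is the main step and requires some care. Given $X \in \DC$, a forward reading of the long exact $\p H^\bullet$-sequence of any candidate triangle $A \to X \to B \to$ with $A \in \pp\DC^{\leqslant 0}$ and $B \in \pp\DC^{\geqslant 1}$ forces $0 \to \p H^1(A) \to \p H^1(X) \to \p H^1(B) \to 0$ to be the torsion/torsion-free decomposition of $\p H^1(X)$. This dictates the construction. Let $F := \p H^1(X)_\free \in \FC$, and consider the composition $\p\t_{\leqslant 1} X \to \p H^1(X)[-1] \to F[-1]$ obtained by combining the canonical map with the quotient map shifted. Define $A$ as its fiber, i.e.\ the third vertex of a distinguished triangle $A \to \p\t_{\leqslant 1} X \to F[-1] \to$; the $\p H^\bullet$ long exact sequence then shows $\p H^i(A) = \p H^i(X)$ for $i \leqslant 0$, $\p H^1(A) = \p H^1(X)_\tors \in \TC$, and $\p H^i(A) = 0$ for $i \geqslant 2$, so $A \in \pp\DC^{\leqslant 0}$. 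Now I apply the octahedron axiom to the composition $A \to \p\t_{\leqslant 1} X \to X$, whose two known cones are $F[-1]$ and $\p\t_{\geqslant 2} X$; the octahedron produces the desired triangle $A \to X \to B \to$ together with a triangle $F[-1] \to B \to \p\t_{\geqslant 2} X \to$, and the latter gives $B \in \p\DC^{\geqslant 1}$ with $\p H^1(B) = F \in \FC$, so $B \in \pp\DC^{\geqslant 1}$.

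The main obstacle is the construction in axiom (iii): identifying that the torsion/torsion-free decomposition of $\p H^1(X)$ is the correct recipe, and then packaging it cleanly via a fiber and the octahedron so that both $A$ and $B$ land in the right subcategories. Once this construction is in place, the remaining cohomology identifications are routine long-exact-sequence bookkeeping.
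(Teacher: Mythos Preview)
Your proof is correct and follows the same overall line as the paper. For axiom (i) the paper uses the truncation adjunctions directly to get $\Hom_\DC(A,B)=\Hom_\CC(\p H^1 A,\p H^1 B)=0$ in one step, whereas you unwind the same computation via truncation triangles and long exact Hom sequences; for axiom (iii) the paper simply cites \cite[Proposition 1.3.15]{BBD} to produce the triangle $(A',A,A'')$ with $\p H^1 A'\simeq T$ and $\p H^1 A''\simeq F$, while your fiber-plus-octahedron construction is exactly an explicit proof of that special case.
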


\begin{proof}
Let us check the three axioms for $t$-structures given in Definition \ref{def:ts}.

\medskip

(i)  Let $A \in \pp \DC^{\leqslant 0}$ and $B \in \pp \DC^{\geqslant 1}$.
Then we have
\[
\Hom_\DC(A,B)
= \Hom_\DC(\p \t_{\geqslant 1} A, \p \t_{\leqslant 1} B)
= \Hom_\CC(\p H^1 A, \p H^1 B)
= 0
\]
The first equality follows from the adjunctions of
Proposition \ref{prop:truncations} \eqref{prop:truncations:adjunctions},
since we have
$A \in \pp \DC^{\leqslant 0} \subset \p \DC^{\leqslant 1}$
and $B \in \pp \DC^{\geqslant 1} \subset \p \DC^{\geqslant 1}$.
The second equality follows since $\p \t_{\geqslant 1} A \simeq \p H^1 A [-1]$
and $\p \t_{\leqslant 1} B \simeq \p H^1 B [-1]$.
The last equality follows from the first axiom in the definition of
torsion theories,
since $\p H^1 A \in \TC$ and $\p H^1 B \in \FC$
(see Definition \ref{def:tt} \eqref{eq:hom torsion theory}).

\medskip

(ii) We have
$\pp \DC^{\leqslant 0} \subset \p \DC^{\leqslant 1} \subset \pp \DC^{\leqslant 1}$
and 
$\pp \DC^{\geqslant 0} \supset \p \DC^{\geqslant 1} \supset \pp \DC^{\geqslant 1}$.

\medskip

(iii) Let $A \in \DC$. By
Definition \ref{def:tt} \eqref{eq:ses torsion theory},
there are objects $T \in \TC$ and $F \in \FC$ such that we have
a short exact sequence
\[
0 \longto T \longto \p H^1 A \longto F \longto 0
\]
By \cite[Proposition 1.3.15]{BBD} there is a distinguished triangle
\[
A' \elem{a} A \elem{b} A'' \elem{d} A'[1]
\]
such that $A' \in \p \DC^{\leqslant 1}$
and $A'' \in \p \DC^{\geqslant 1}$,  $\p H^1 A' \simeq T$ and
$\p H^1 A'' \simeq F$, and thus $A' \in \pp \DC^{\leqslant 0}$
and $A'' \in \pp \DC^{\geqslant 1}$.
\end{proof}

We denote by $\CC^+$ the heart of this new $t$-structure, by
$\pp H^n : \DC \to \CC^+$ the new cohomology functors, and by
$\pp \t_{\leqslant n}$, $\pp \t_{\geqslant n}$ the new truncation functors.

We may also use the following notation. For the notions attached to the initial
$t$-structure, we may drop all the $p$, and for the new $t$-structure one
may write $n_+$ instead of $n$, as follows: $(\DC^{\leqslant n_+}, \DC^{\geqslant n_+})$,
$H^{n_+}$, $\t_{\leqslant n_+}$, $\t_{\geqslant n_+}$.

Note that $\CC^+$ is endowed with a torsion theory, namely $(\FC,\TC[-1])$.
We can do the same construction, and we find that $\CC^{++} = \CC [-1]$.
We recover the usual shift of $t$-structures.

We have the following chain of morphisms:
{\small
\[
\cdots
\longto \t_{\leqslant (n - 2)_+}
\longto \t_{\leqslant n - 1}
\longto \t_{\leqslant (n - 1)_+}
\longto \t_{\leqslant n}
\longto \t_{\leqslant n_+}
\longto \t_{\leqslant n + 1}
\longto \cdots
\]
}
and the following distinguished triangles:
\begin{gather}
\label{eq:tri tors}
\t_{\leqslant n} \longto \t_{\leqslant n_+} \longto H^{n + 1}_\tors (-) [- n - 1]
\rightsquigarrow
\\
\label{eq:tri free}
\t_{\leqslant n_+} \longto \t_{\leqslant n + 1} \longto H^{n + 1}_\free (-) [- n - 1]
\rightsquigarrow
\end{gather}
This follows from \cite[Prop. 1.3.15]{BBD}, which
is proved using the octahedron axiom.
These triangles can be read off the following diagram:

\[
\begin{array}{|*{7}{c|}}
\hline
\multicolumn{2}{|c|}{\T H^{n-1}}
&\multicolumn{2}{c|}{H^n}
&\multicolumn{2}{c|}{H^{n+1}}
&\multicolumn{1}{c|}{\cdots}
\\
\hline
\T\B
H^{n-1}_\tors
&H^{n-1}_\free
&H^{n}_\tors
&H^{n}_\free
&H^{n+1}_\tors
&H^{n+1}_\free
&H^{n+2}_\tors
\\
\hline
\multicolumn{1}{|c|}{\T \cdots}
&\multicolumn{2}{c|}{H^{(n-1)_+}}
&\multicolumn{2}{c|}{H^{n_+}}
&\multicolumn{2}{c|}{H^{(n+1)_+}}
\\
\hline
\multicolumn{1}{|c|}{\cdots}
&\multicolumn{1}{r|}{\SS \t_{\leq n - 1}}
&\multicolumn{1}{r|}{\SS \t_{\leq (n - 1)_+}}
&\multicolumn{1}{r|}{\SS \t_{\leq n}}
&\multicolumn{1}{r|}{\SS \t_{\leq n_+}}
&\multicolumn{1}{r|}{\SS \t_{\leq n + 1}}
&\multicolumn{1}{c|}{\cdots}
\end{array}
\]

If $\DC$ is an $\OM$-linear $t$-category, then its heart
$\CC$ is also $\OM$-linear.  If $\CC$ is noetherian (resp. artinian),
then it is naturally endowed with a torsion theory by Proposition
\ref{prop:torsion theory tors div}, and the preceding considerations
apply.

Assume, for example, that $\CC$ is noetherian, endowed with the
torsion theory $(\TC,\FC)$, where $\TC$ (resp. $\FC$) is the full
subcategory of torsion (resp. torsion-free) objects in $\CC$.
For $L$ is $\FC$, $\varpi 1_L$ is a monomorphism in $\CC$,
and we have a short exact sequence in $\CC$
\[
0 \longto L \elem{\varpi 1_L} L \longto \Coker_\CC \varpi 1_L
\longto 0
\]
Since $\CC$ is an admissible abelian subcategory of $\DC$, this short
exact sequence comes from a distinguished triangle in $\DC$
\[
L \elem{\varpi 1_L} L \longto \Coker_\CC \varpi 1_L \rtordu
\]
Rotating it (by the TR 2 axiom), we get a distinguished triangle
\[
\Coker_\CC \varpi 1_L [-1] \longto L \elem{\varpi 1_L} L \rtordu
\]
all whose objects are in $\CC^+$. Since this abelian subcategory is
also admissible, we have the following short exact sequence in $\CC^+$
\[
0 \longto \Coker_\CC \varpi 1_L [-1] \longto L \elem{\varpi 1_L} L 
\longto 0
\]
showing that $\varpi 1_L$ is an epimorphism in $\CC^+$ (that is, $L$
is divisible in $\CC^+$), and that
$\Ker_{\CC^+} \varpi 1_L = \Coker_\CC \varpi 1_L [-1]$.

\begin{example}
\label{ex:point}
Let us consider $\DC = D^b_c(\OM)$, the full subcategory of the
bounded derived category of $\OM$-modules, whose objects are the
complexes all of whose cohomology groups are finitely generated over
$\OM$. We can take the natural $t$-structure $(\DC^{\leqslant
  0},\DC^{\geqslant 0})$. The heart $\CC$ is then the abelian category
of finitely generated $\OM$-modules (we identify such a module with
the corresponding complex concentrated in degree zero). The category $\CC$ is
noetherian but not artinian: the object $\OM$ has an infinite
decreasing sequence of subobjects $(\mG^n)$. In $\CC$, it is a
torsion-free object: $\varpi^n 1_\OM$ is a monomorphism in $\CC$, with
cokernel $\OM/\mG^n$.

Now, we can look at $\OM$ as an object of the abelian category $\CC^+$
obtained as above. Then $\OM$ is a divisible object in $\CC^+$:
$\varpi^n 1_\OM$ is an epimorphism, with kernel
$\OM/\mG^n [-1]$. This provides an infinite increasing sequence of
subobjects of $\OM$ in $\CC^+$, showing that $\CC^+$ is not noetherian.
\end{example}

\begin{remark}
The preceding example is just about perverse sheaves
on a point, for the perversities $p$ and $p_+$.
\end{remark}

\subsection{Recollement}
\label{subsec:recollement}

The recollement (gluing) construction consists roughly in a way to construct
a $t$-structure on some derived category of sheaves on a topological space
(or a ringed topos) $X$, given $t$-structures on derived categories of sheaves on
$U$ and on $F$, where $j : U \to X$ is an open subset of $X$, and
$i : F \to X$ its closed complement. This can be done in a very general axiomatic
framework \cite[\S 1.4]{BBD}, which can be applied to both the complex topology
and the étale topology. The axioms can even be applied to non-topological
situations, for example for representations of algebras. Let us recall the definitions 
and main properties of the recollement procedure.

So let $\DC$, $\DC_U$ and $\DC_F$ be three triangulated categories, and let
$i_* : \DC_F \to \DC$ and $j^* : \DC \to \DC_U$ be triangulated functors.
It is convenient to set $i_! = i_*$ and $j^! = j^*$. We assume that
the following conditions are satisfied.

\begin{assumption}[Recollement situation]
\label{ass:recollement}
\mbox{}
\begin{enumerate}[(i)]
\item $i_*$ has triangulated left and right adjoints, denoted by $i^*$ and
  $i^!$ respectively.

\item $j^*$ has triangulated left and right adjoints, denoted by $j_!$
  and $j_*$ respectively.

\item We have $j^* i_* = 0$. By adjunction, we also have $i^* j_! = 0$
  and $i^! j_* = 0$. Moreover, for $A$ in $\DC_F$ and $B$ in $\DC_U$, we
  have
\[
\Hom(j_! B, i_* A) = 0 \text{ and } \Hom(i_* A, j_* B) = 0
\]

\item For all $K$ in $\DC$, there exists $d : i_* i^* K \to j_! j^* K
  [1]$ (resp. $d : j_* j^* K \to i_* i^! K [1]$), necessarily unique,
  such that the triangle $j_! j^* K \to K \to i_* i^* K \map{d}$
  (resp. $i_* i^! K \to K \to j_* j^* K \map{d}$) is distinguished.

\item The functors $i_*$, $j_!$ and $j_*$ are fully faithful: the
  adjunction morphisms $i^* i_* \to \id \to i^! i_*$ and $j^* j_* \to
  \id \to j^* j_!$ are isomorphisms.
\end{enumerate}
\end{assumption}

Whenever we have a diagram
\begin{equation}\label{eq:recollement setup}
\xymatrix{
\DC_F
\ar[r]^{i_*}
&
\DC
\ar@<-3ex>[l]_{i^*}
\ar@<3ex>[l]_{i^!}
\ar[r]^{j^*}
&
\DC_U
\ar@<-3ex>[l]_{j_!}
\ar@<3ex>[l]_{j_*}
}
\end{equation}
such that the preceding conditions are satisfied, we say that we are
in a situation of recollement.

Note that for each recollement situation, there is a dual recollement
situation on the opposite derived categories. Recall that the opposite
category of a triangulated category $\TC$ is also triangulated, with
translation functor $[-1]$, and distinguished triangles the triangles
$(Z,Y,X)$, where $(X,Y,Z)$ is a distinguished triangle in $\TC$.
One can check that the conditions in Assumption \ref{ass:recollement} are satisfied
for the following diagram, where the roles of $i^*$ and $i^!$ (resp.
$j_!$ and $j_*$) have been exchanged.

\begin{equation}\label{eq:dual recollement setup}
\xymatrix{
\DC_F^\op
\ar[r]^{i_*}
&
\DC^\op
\ar@<-3ex>[l]_{i^!}
\ar@<3ex>[l]_{i^*}
\ar[r]^{j^*}
&
\DC_U^\op
\ar@<-3ex>[l]_{j_*}
\ar@<3ex>[l]_{j_!}
}
\end{equation}
We can say that there is a ``formal duality'' in the axioms of a
recollement situation, exchanging the symbols $!$ and $*$. Note that,
in the case of $D^b_c(X,\EM)$, the duality $\DC_{X,\EM}$ really
exchanges these functors.

If $\UC \map{u} \TC \map{q} \VC$ is a sequence of triangulated
functors between triangulated categories such that $u$ identifies
$\UC$ with a thick subcategory of $\TC$, and $q$ identifies $\VC$ with
the quotient of $\TC$ by the thick subcategory $u(\UC)$, then we say
that the sequence $0 \to \UC \map{u} \TC \map{q} \VC \to 0$ is exact.

\begin{proposition}
The sequences
\begin{gather*}
0 \longfrom \DC_\FC \leftelem{i^*} \DC \leftelem{j_!} \DC_U \longfrom 0\\
0 \longto \DC_\FC \elem{i_*} \DC \elem{j^*} \DC_U \longto 0\\
0 \longfrom \DC_\FC \leftelem{i^!} \DC \leftelem{j_*} \DC_U \longfrom 0
\end{gather*}
are exact.
\end{proposition}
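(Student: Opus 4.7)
The plan is to establish the middle exact sequence $0 \to \DC_\FC \elem{i_*} \DC \elem{j^*} \DC_U \to 0$ first, and then derive the other two by invoking the formal duality between the diagrams \eqref{eq:recollement setup} and \eqref{eq:dual recollement setup}, which interchanges the roles of $(i^*,i^!)$ and $(j_!,j_*)$.

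For the middle sequence, I would proceed in three steps. First, the functor $i_*$ is fully faithful by Assumption \ref{ass:recollement}(v), so it identifies $\DC_\FC$ with a strictly full triangulated subcategory of $\DC$ (triangulated because $i_*$ is triangulated). I would then characterize its essential image as
\[
i_*\DC_\FC = \{ K \in \DC \mid j^* K = 0 \}.
\]
The inclusion $\subseteq$ is immediate from $j^* i_* = 0$ (axiom (iii)). For the reverse inclusion, if $j^* K = 0$ then the distinguished triangle $j_! j^* K \to K \to i_* i^* K \map{d}$ of axiom (iv) degenerates to an isomorphism $K \isom i_* i^* K$, so $K \in i_*\DC_\FC$. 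Since this kernel is automatically thick (closed under direct summands, because a summand of $K$ with $j^* K = 0$ still has vanishing $j^*$), the subcategory $i_*\DC_\FC$ is thick.

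Second, I would show that $j^*$ induces an equivalence between the Verdier quotient $\DC / i_*\DC_\FC$ and $\DC_U$. Essential surjectivity is immediate because $j^* j_* \simeq \id$ (axiom (v)) already shows $j^*$ is essentially surjective on $\DC$, hence on the quotient. For fully faithfulness, the universal property of the Verdier quotient gives a canonical functor $\bar\jmath^* : \DC/i_*\DC_\FC \to \DC_U$ factoring $j^*$, and I would construct a quasi-inverse using the fully faithful right adjoint $j_*$ (or equally well $j_!$). Concretely, for $K, L \in \DC$ one computes
\[
\Hom_{\DC/i_*\DC_\FC}(K,L) \simeq \Hom_{\DC_U}(j^*K, j^*L)
\]
by applying $\Hom(K,-)$ to the distinguished triangle $i_* i^! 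L \to L \to j_* j^* L \map{d}$: the $\Hom$ group into $i_* i^! L$ and its shift becomes invertible in the quotient (since these objects are killed), and adjunction $(j^*,j_*)$ together with $j^* j_* \simeq \id$ identifies the remaining term with $\Hom_{\DC_U}(j^*K, j^*L)$.

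Finally, the sequences $0 \from \DC_\FC \leftelem{i^*} \DC \leftelem{j_!} \DC_U \from 0$ and $0 \from \DC_\FC \leftelem{i^!} \DC \leftelem{j_*} \DC_U \from 0$ follow by applying the same argument to the dual recollement diagram \eqref{eq:dual recollement setup}, where $j_!$ (resp.\ $j_*$) becomes fully faithful into $\DC^\op$, and $i^*$ (resp.\ $i^!$) becomes the quotient functor. The main obstacle I anticipate is the careful verification of fully faithfulness at the Verdier quotient level: one must check that the adjunction triangles of axiom (iv) really do provide calculi of fractions identifying morphisms in the localization with morphisms in $\DC_U$, and this is where axioms (iii), (iv) and (v) have to be used in combination rather than separately.
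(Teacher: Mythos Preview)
The paper does not actually prove this proposition; it is stated without proof as part of the recollection of results from \cite{BBD}. So there is nothing to compare against directly, and I will simply assess your argument on its own.

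Your treatment of the middle sequence is essentially correct and standard: the identification $i_*\DC_F = \{K \mid j^*K = 0\}$ via the triangle $j_!j^*K \to K \to i_*i^*K$ is right, and exhibiting $j_*$ (or $j_!$) as a quasi-inverse to the induced functor $\DC/i_*\DC_F \to \DC_U$ is the clean way to establish the equivalence. Your phrasing about Hom groups ``becoming invertible in the quotient'' is loose, but the underlying idea (that the cone of $L \to j_*j^*L$ lies in $i_*\DC_F$, so this map becomes an isomorphism in the quotient) is sound.

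The genuine issue is your derivation of the first and third sequences. The formal duality of \eqref{eq:dual recollement setup} passes to opposite categories and swaps $i^*\leftrightarrow i^!$ and $j_!\leftrightarrow j_*$; applying your middle-sequence argument there gives back the \emph{same} middle sequence (which is self-dual), not the outer two. What the duality actually does is exchange the first sequence with the third. The first and third sequences require their own (entirely parallel) arguments: for the first, one shows $j_!\DC_U = \{K \mid i^*K = 0\}$ using $i^*j_! = 0$ and the same triangle $j_!j^*K \to K \to i_*i^*K$, and then that $i_*$ furnishes a quasi-inverse to the induced $\DC/j_!\DC_U \to \DC_F$; the third is analogous with $j_*$, $i^!$, and the triangle $i_*i^!K \to K \to j_*j^*K$. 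So your plan is salvageable, but you should replace the appeal to duality with these direct parallel arguments (and then, if you like, note that duality reduces the third to the first).
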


Suppose we are given a $t$-structure
$(\DC_U^{\leqslant 0},\DC_U^{\geqslant 0})$ on $\DC_U$,
and a $t$-structure
$(\DC_F^{\leqslant 0},\DC_F^{\geqslant 0})$ on $\DC_F$.
Let us define
\begin{gather}
\label{eq:recollement def <= 0}
\DC^{\leqslant 0} := \{K \in \DC \mid j^*K \in \DC_U^{\leqslant 0}
\text{ and } i^*K \in \DC_F^{\leqslant 0} \} \\
\label{eq:recollement def >= 0}
\DC^{\geqslant 0} := \{K \in \DC \mid j^*K \in \DC_U^{\geqslant 0}
\text{ and } i^!K \in \DC_F^{\geqslant 0} \}
\end{gather}

\begin{theorem}\label{th:recollement}
With the preceding notations, $(\DC^{\leqslant 0}, \DC^{\geqslant 0})$
is a $t$-structure on $\DC$.
\end{theorem}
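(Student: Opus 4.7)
The plan is to verify the three axioms of Definition \ref{def:ts} in turn, with almost all the work concentrated in axiom (iii). Axiom (ii) is entirely formal: the defining conditions on $j^*$ and $i^*$ (resp.\ on $j^*$ and $i^!$) involve membership in $\DC_U^{\leqslant 0}, \DC_F^{\leqslant 0}$ (resp.\ $\DC_U^{\geqslant 0}, \DC_F^{\geqslant 0}$), and these two chains inherit the required nesting from the given $t$-structures on $\DC_U$ and $\DC_F$.

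For axiom (i), given $X \in \DC^{\leqslant 0}$ and $Y \in \DC^{\geqslant 1}$, I would apply $\Hom_\DC(-,Y)$ to the distinguished triangle $j_! j^* X \to X \to i_* i^* X \to$ supplied by Assumption \ref{ass:recollement}(iv). It then suffices to check that the outer two $\Hom$-groups vanish. By the adjunctions $(j_!, j^*)$ and $(i_*, i^!)$ they identify with $\Hom_{\DC_U}(j^* X, j^* Y)$ and $\Hom_{\DC_F}(i^* X, i^! Y)$, both of which vanish by axiom (i) of the given $t$-structures.

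The main content is axiom (iii): producing for any $X \in \DC$ a distinguished triangle $A \to X \to B \to$ with $A \in \DC^{\leqslant 0}$ and $B \in \DC^{\geqslant 1}$. I would build it in two stages, truncating first on the open stratum and then on the closed stratum. \emph{Stage 1.} Compose the adjunction unit $X \to j_* j^* X$ with $j_*$ of the canonical truncation $j^* X \to \t_{\geqslant 1} j^* X$, and complete to a distinguished triangle $X' \to X \to j_* \t_{\geqslant 1} j^* X \to$. Applying the triangulated functor $j^*$ and using $j^* j_* \simeq \id$ from Assumption \ref{ass:recollement}(v) identifies $j^* X' \simeq \t_{\leqslant 0} j^* X \in \DC_U^{\leqslant 0}$. \emph{Stage 2.} Run the analogous construction on the closed part of $X'$: compose $X' \to i_* i^* X'$ with $i_*$ of the truncation $i^* X' \to \t_{\geqslant 1} i^* X'$, and complete to a distinguished triangle $A \to X' \to i_* \t_{\geqslant 1} i^* X' \to$. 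Using $j^* i_* = 0$ and $i^* i_* \simeq \id$, one obtains $j^* A \simeq j^* X' \in \DC_U^{\leqslant 0}$ and $i^* A \simeq \t_{\leqslant 0} i^* X' \in \DC_F^{\leqslant 0}$, so $A \in \DC^{\leqslant 0}$.

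It then remains to identify the cone $B$ of the composition $A \to X' \to X$. The octahedron axiom applied to this composition produces a distinguished triangle $i_* \t_{\geqslant 1} i^* X' \to B \to j_* \t_{\geqslant 1} j^* X \to$. Applying $j^*$ kills the first term (since $j^* i_* = 0$) and reduces the third to $\t_{\geqslant 1} j^* X \in \DC_U^{\geqslant 1}$; applying $i^!$ kills the last term (since $i^! j_* = 0$) and reduces the first to $\t_{\geqslant 1} i^* X' \in \DC_F^{\geqslant 1}$. Hence $B \in \DC^{\geqslant 1}$, which completes the verification. The main subtlety, and the step I expect will require the most care, is precisely that open and closed truncations cannot be carried out simultaneously: one must first adjust $X$ on the open stratum, where only $j_*$ pushes $\t_{\geqslant 1} j^* X$ back into $\DC$, and only then on the closed stratum, relying on the octahedron axiom to assemble the two individual truncations into a single cone that is bounded below on both strata.
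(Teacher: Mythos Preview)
Your proposal is correct and follows exactly the standard argument from \cite{BBD}, which is what the paper is citing here (the paper does not supply its own proof of this theorem but recalls it from \cite[Th\'eor\`eme 1.4.10]{BBD}). The two-stage truncation---first on $U$ via $j_*\t_{\geqslant 1}j^*$, then on $F$ via $i_*\t_{\geqslant 1}i^*$, assembled with the octahedron---is precisely the construction given there.
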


We say that it is obtained from those on
$\DC_U$ and $\DC_F$ by \emph{recollement} (gluing).

Now suppose we are just given a $t$-structure on $\DC_F$. Then we can
apply the recollement procedure to the degenerate $t$-structure
$(\DC_U,0)$ on $\DC_U$ and to the given $t$-structure on $\DC_F$. The
functors $\t_{\leqslant n}$ ($n \in \ZM$) relative to the $t$-structure obtained on
$\DC$ will be denoted $\t^F_{\leqslant n}$. The functor
$\t^F_{\leqslant n}$ is right adjoint to the inclusion of the full
subcategory of $\DC$ whose objects are the $X$ such that $i^* X$ is in
$\DC_F^{\leqslant n}$. We have a distinguished triangle
$(\t^F_{\leqslant n} X, X, i_* \t_{> n} i^* X)$. The $H^n$ cohomology
functors for this $t$-structure are the $i_* H^n i^*$.
Thus we have a chain of morphisms:
\begin{equation}
\label{eq:chain ptF}
\cdots \longto \t^F_{\leqslant n - 1}
\longto \t^F_{\leqslant n} \longto \t^F_{\leqslant n + 1}
\longto \cdots
\end{equation}
and distinguished triangles:
\begin{equation}
\label{eq:tri ptF}
\t^F_{\leqslant n} \longto \t^F_{\leqslant n + 1}
\longto i_* H^{n+1} i^* [- n - 1]
\rightsquigarrow
\end{equation}
%%%%%%%%%%%%%%%%%%%%%%%%%%%%%%%
We summarize this by the following diagram:
\[
\begin{array}{*{5}{|c}|}
\hline
\T\cdots
& i_* H^{n-1} i^*
& i_* H^{n} i^*
& i_* H^{n+1} i^*
& \cdots
\\
\hline
\multicolumn{1}{|c|}{\cdots}
&\multicolumn{1}{r|}{\SS\t^F_{\leq n - 1}}
&\multicolumn{1}{r|}{\SS\t^F_{\leq n}}
&\multicolumn{1}{r|}{\SS\t^F_{\leq n + 1}}
&\multicolumn{1}{c|}{\cdots}
\end{array}
\]
One has to keep in mind, though, that this $t$-structure is
degenerate, so an object should not be thought as ``made of'' its
``successive quotients'' $i_* H^n i^*$ (an object in $j_! \DC_U$ will
be in $\DC_U^{\leq n}$ for all $n$). 

Dually, one can define the functor $\t^F_{\geqslant n}$ using the
degenerate $t$-structure $(0,\DC_U)$ on $\DC_U$. It is left adjoint to
the inclusion of $\{X \in \DC \mid i^! X \in \DC_F^{\geqslant n}\}$ in
$\DC$, we have distinguished triangles
$(i_* \t_{< n} i^! X, X, \t^F_{\geqslant n} X)$,
and the $H^n$ are the $i_* H^n i^!$.

Similarly, if we are just given a $t$-structure on $\DC_U$, and if we
endow $\DC_F$ with the degenerate $t$-structure $(\DC_F, 0)$
(resp. $(0, \DC_F)$), we can define a $t$-structure on $\DC$ for which
the functors $\t_{\leqslant n}$ (resp. $\t_{\geqslant n}$), denoted by 
$\t^U_{\leqslant n}$ (resp. $\t^U_{\geqslant n}$), yield distinguished
triangles $(\t^U_{\leqslant n}, X, j_* \t_{> n} j^* X)$
(resp. $(j_! \t_{< n} j^* X, X, \t^U_{\geqslant n} X)$),
and for which the $H^n$ functors are the $j_* H^n j^*$
(resp. $j_! H^n j^*$).

Moreover, we have
\begin{equation}
\label{eq:tFtU}
\t_{\leqslant n} = \t_{\leqslant n}^F \t_{\leqslant n}^U
\text{ and }
\t_{\geqslant n} = \t_{\geqslant n}^F \t_{\geqslant n}^U
\end{equation}

An \emph{extension} of an object $Y$ of $\DC_U$ is an object $X$ of
$\DC$ endowed with an isomorphism $j^* X \isom Y$. Such an isomorphism
induces morphisms $j_! Y \to X \to j_* Y$ by adjunction. If an
extension $X$ of $Y$ is isomorphic, as an extension, to
$\t_{\geqslant n}^F j_! Y$ (resp. $\t_{\leqslant n}^F j_* Y$), then
the isomorphism is unique, and we just write $X = \t_{\geqslant n}^F j_! Y$
(resp. $\t_{\leqslant n}^F j_* Y$).

\begin{proposition}
Let $Y$ in $\DC_U$ and $n$ an integer. There is, up to unique
isomorphism, a unique extension $X$ of $Y$ such that $i^* X$ is in
$\DC_F^{\leqslant n - 1}$ and
$i^! X$ is in $\DC_F^{\geqslant n + 1}$. It is
$\t^F_{\leqslant n - 1} j_* Y$, and this extension of $Y$ is
canonically isomorphic to $\t^F_{\geqslant n + 1} j_! Y$.
\end{proposition}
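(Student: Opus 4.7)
The plan is to show that $X := \t^F_{\leqslant n-1} j_* Y$ has all the required properties, then prove uniqueness up to unique isomorphism by exploiting the defining adjunction of $\t^F_{\leqslant n-1}$, and finally argue that $\t^F_{\geqslant n+1} j_! Y$ satisfies the same characterization so must be canonically isomorphic to $X$.

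First I would verify the existence. Starting from the distinguished triangle
\[
\t^F_{\leqslant n-1} j_* Y \longto j_* Y \longto i_* \t_{\geqslant n} i^* j_* Y \rightsquigarrow
\]
one applies in turn the three functors $j^*$, $i^*$, $i^!$. Using $j^* i_* = 0$ and $j^* j_* \simeq \id$, the first shows $j^* X \isom Y$, so $X$ is an extension of $Y$. Using $i^* i_* \simeq \id$, the second shows $i^* X \simeq \t_{\leqslant n-1} i^* j_* Y \in \DC_F^{\leqslant n-1}$. Using $i^! j_* = 0$ and $i^! i_* \simeq \id$, the third gives $i^! X \simeq (\t_{\geqslant n} i^* j_* Y)[-1] \in \DC_F^{\geqslant n+1}$.

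For uniqueness, let $X'$ be any extension of $Y$ satisfying the two conditions. The adjunction counit $X' \to j_* j^* X' = j_* Y$ then factors uniquely through $\t^F_{\leqslant n-1} j_* Y = X$, because $X'$ lies in the full subcategory $\{Z \in \DC \mid i^* Z \in \DC_F^{\leqslant n-1}\}$ to which $\t^F_{\leqslant n-1}$ is right adjoint. Applying $j^*$, this factorization restricts to $\id_Y$, so it is a morphism of extensions. To see it is an isomorphism, let $C$ be its cone. Then $j^* C = 0$, so $C \simeq i_* Z$ for some $Z \in \DC_F$ with $Z \simeq i^* C \simeq i^! C$. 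Since $\DC_F^{\leqslant n-1}$ and $\DC_F^{\geqslant n+1}$ are each stable under extensions, and both $i^* X', i^* X \in \DC_F^{\leqslant n-1}$ while $i^! X', i^! X \in \DC_F^{\geqslant n+1}$, we obtain $Z \in \DC_F^{\leqslant n-1} \cap \DC_F^{\geqslant n+1} = 0$, whence $C = 0$. The uniqueness of the isomorphism itself follows from the uniqueness of the adjunction factorization.

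Finally, to identify $X$ with $\t^F_{\geqslant n+1} j_! Y$, I would run the dual verification on the triangle
\[
i_* \t_{\leqslant n} i^! j_! Y \longto j_! Y \longto \t^F_{\geqslant n+1} j_! Y \rightsquigarrow
\]
using $j^* i_* = 0$, $i^* j_! = 0$, and $i^! i_* \simeq i^* i_* \simeq \id$ to check that $\t^F_{\geqslant n+1} j_! Y$ is an extension of $Y$ with $i^! (\t^F_{\geqslant n+1} j_! Y) \simeq \t_{\geqslant n+1} i^! j_! Y \in \DC_F^{\geqslant n+1}$ and $i^* (\t^F_{\geqslant n+1} j_! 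Y) \simeq (\t_{\leqslant n} i^! j_! Y)[1] \in \DC_F^{\leqslant n-1}$. The uniqueness statement then supplies the canonical isomorphism with $\t^F_{\leqslant n-1} j_* Y$. The one point requiring care is bookkeeping of the shifts in the $i^*$ and $i^!$ computations (since the cones of the defining triangles land on different sides of $X$), but no real obstacle arises.
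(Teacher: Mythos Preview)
The paper does not supply its own proof of this proposition; it is one of the standard facts recalled from \cite{BBD} (Proposition 1.4.23 there). Your argument is the standard one and is correct in outline and in essentially all details.

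One small imprecision in the uniqueness step: when you invoke ``stability under extensions'' to place $Z = i^! C$ in $\DC_F^{\geqslant n+1}$, note that the triangle $i^! X' \to i^! X \to i^! C \rightsquigarrow$ exhibits $i^! C$ as an extension of $(i^! X')[1]$ by $i^! X$; since $(i^! X')[1]$ lies only in $\DC_F^{\geqslant n}$, closure under extensions gives $i^! C \in \DC_F^{\geqslant n}$, not $\DC_F^{\geqslant n+1}$. This does not affect the conclusion, as $\DC_F^{\leqslant n-1} \cap \DC_F^{\geqslant n} = 0$ already forces $Z = 0$.
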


Let $\DC_m$ be the full subcategory of $\DC$ consisting of the objects
$X$ such that $i^* X \in \DC_F^{\leqslant n - 1}$ and
$i^! X \in \DC_F^{\leqslant n + 1}$. The functor $j^*$ induces an
equivalence $\DC_m \to \DC_U$, with quasi-inverse
$\t^F_{\leqslant n - 1} j_* = \t^F_{\geqslant n + 1} j_!$, which will
be denoted $j_{!*}$.

Let $\CC$, $\CC_U$ and $\CC_F$ denote the hearts of the $t$-categories
$\DC$, $\DC_U$ and $\DC_F$. We will use the notation $\p T$ of
Proposition \ref{prop:pT}, where $T$ is one of the functors of the
recollement diagram \eqref{eq:recollement setup}.
By definition of the $t$-structure of $\DC$, $j^*$ is $t$-exact, $i^*$
is right $t$-exact, and $i^!$ is left $t$-exact. Applying
Proposition \ref{prop:pT}, we get the first two points of the following
proposition.

\begin{proposition}
\label{prop:properties 6 functors}
The functors $\p j_!$, $\p j^*$, $\p j_*$, $\p i^*$, $\p i_*$, $\p i^!$
have the following properties:
\begin{enumerate}[(i)]
\item
\label{it:adj p i_*} 
The functor $\p i_*$ has left and right adjoints $\p i^*$ and $\p i^!$.
Hence $\p i_*$ is exact, $\p i^*$ is right exact and $\p i^!$ is left
exact.

\item
\label{it:adj p j^*}
The functor $\p j^*$ has left and right adjoints $\p j_!$ and $\p j_*$.
Hence $\p j^*$ is exact, $\p j_!$ is right exact and $\p j_*$ is left
exact.

%\label{it:exact adjoint 6f}
%The functors $j_!$ and $i^*$ are right $t$-exact, the functors
%$j^*$ and $i_*$ are $t$-exact, and the functors $j_*$ and $i^!$ are
%  left $t$-exact. Hence $\p j_!$ and $\p i^*$ are right exact, 
%$\p j^*$ and $\p i_*$ are exact, and $\p j_*$ and $\p i^!$ are
%  left exact. Besides,
%$(\p j_!, \p j^*, \p j_*)$ and $(\p i^*, \p i_*, \p i^!)$ form
%  two sequences of adjoint functors. 

\item
\label{it:comp 6f}
The compositions $\p j^* \p i_*$, $\p i^* \p j_!$ and $\p i^! \p j_*$ are
zero. For $A$ in $\CC_F$ and $B$ in $\CC_U$, we have
\[
\Hom(\p j_! B, \p i_* A) = 0 \text{ and } \Hom(\p i_* A, \p j_* B) = 0
\]

\item
\label{it:ex seq}
For any object $A$ in $\CC$, we have exact sequences
\begin{gather}
\label{eq:ex seq a}
0 \longto \p i_* H^{-1} i^* A \longto \p j_! \p j^* A \longto A
\longto \p i_* \p i^* A \longto 0\\
\label{eq:ex seq b}
0 \longto \p i_* \p i^! A \longto A \longto \p j_* \p j^* A \longto \p
i_* H^1 i^! A \longto 0
\end{gather}

\item
\label{it:fully faithful}
The functors $\p i_*$, $\p j_!$ and $\p j_*$ are fully faithful: the
adjunction morphisms $\p i^* \p i_* \to \Id \to \p i^! \p i_*$
and $\p j^* \p j_* \to \Id \to \p j^* \p j_!$ are isomorphisms.

\item
\label{it:thick subcat}
The essential image of the fully faithful functor $\p i_*$ is a thick subcategory of $\CC$.
For any object $A$ in $\CC$, $\p i_* \p i^* A$ is the largest quotient
of $A$ in $\p i_* \CC_F$, and $\p i_* \p i^! A$ is the largest subobject of
$A$ in $\p i_* \CC_F$.

\item
\label{it:quotient}
  The functor $\p j^*$ identifies $\CC_U$ with the quotient of
  $\CC$ by the thick subcategory $\p i_* \CC_F$.
\end{enumerate}
\end{proposition}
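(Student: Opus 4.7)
The plan is to establish the seven items by combining the recollement distinguished triangles and adjunctions from Assumption \ref{ass:recollement} with the heart-level exactness and adjunction principle of Proposition \ref{prop:pT}. The first observation is that the definition \eqref{eq:recollement def <= 0}--\eqref{eq:recollement def >= 0} makes $j^{*}$ $t$-exact, $i^{*}$ right $t$-exact, and $i^{!}$ left $t$-exact; then by Proposition \ref{prop:pT}(2) the adjunctions $(i^{*},i_{*})$, $(i_{*},i^{!})$, $(j_{!},j^{*})$, $(j^{*},j_{*})$ force $i_{*}$ to be $t$-exact, $j_{!}$ right $t$-exact and $j_{*}$ left $t$-exact, and they yield the heart-level adjoint pairs $(\p i^{*},\p i_{*})$, $(\p i_{*},\p i^{!})$, $(\p j_{!},\p j^{*})$, $(\p j^{*},\p j_{*})$ with the claimed exactness. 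This proves (i) and (ii).

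For (iii), $\p j^{*}\p i_{*}=0$ follows because $j^{*}$ is $t$-exact (so $\p j^{*}=j^{*}$ on the heart) and $i_{*}$ is $t$-exact (so $\p i_{*}=i_{*}$ on the heart), together with $j^{*}i_{*}=0$. The vanishings $\p i^{*}\p j_{!}=0$ and $\p i^{!}\p j_{*}=0$, and the Hom vanishings, then follow by adjunction with the $\p$-functors from (i)--(ii). For (iv), apply the cohomological functor $\p H^{\bullet}$ to the recollement triangles $j_{!}j^{*}A\to A\to i_{*}i^{*}A\rtordu$ and $i_{*}i^{!}A\to A\to j_{*}j^{*}A\rtordu$ of Assumption \ref{ass:recollement}(iv), where $A\in\CC$. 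The right $t$-exactness of $j_{!}$ and $i^{*}$ gives $\p H^{k}(j_{!}j^{*}A)=0$ and $\p H^{k}(i_{*}i^{*}A)=0$ for $k>0$ (using also $t$-exactness of $i_{*}$ to commute $\p H^{k}$ past $i_{*}$); dually $\p H^{k}(i_{*}i^{!}A)=0$ and $\p H^{k}(j_{*}j^{*}A)=0$ for $k<0$. The long exact sequences then collapse to \eqref{eq:ex seq a} and \eqref{eq:ex seq b}, after identifying $\p H^{0}(j_{!}j^{*}A)=\p j_{!}\p j^{*}A$, $\p H^{0}(i_{*}i^{*}A)=\p i_{*}\p i^{*}A$, etc.

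For (v), fully faithfulness of $\p i_{*}$ follows from $i^{*}i_{*}\simeq\id\simeq i^{!}i_{*}$ in $\DC$ (Assumption \ref{ass:recollement}(v)) together with $t$-exactness of $i_{*}$, since then $\p i^{*}\p i_{*}=H^{0}i^{*}i_{*}=H^{0}=\id$ on $\CC_{F}$, and similarly for $\p i^{!}\p i_{*}$; fully faithfulness of $\p j_{!}$ and $\p j_{*}$ uses $j^{*}j_{!}\simeq\id\simeq j^{*}j_{*}$ combined with $t$-exactness of $j^{*}$. For (vi), identify the essential image of $\p i_{*}$ as $\ker(\p j^{*})=\{A\in\CC\mid \p j^{*}A=0\}$: containment $\subset$ follows from (iii), while $\supset$ follows from \eqref{eq:ex seq a}, which gives $\p j_{!}\p j^{*}A=0$ hence $A\isom\p i_{*}\p i^{*}A$. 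Thickness of $\ker(\p j^{*})$ is then automatic from exactness of $\p j^{*}$. The maximality claims for $\p i_{*}\p i^{*}A$ (as quotient) and $\p i_{*}\p i^{!}A$ (as subobject) follow directly from \eqref{eq:ex seq a}--\eqref{eq:ex seq b} and the universal properties of the adjunctions $\p i^{*}\dashv\p i_{*}\dashv\p i^{!}$.

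Finally, (vii) is a formal consequence of (i)--(vi): $\p j^{*}$ is exact, essentially surjective (since $j^{*}\p j_{*}=j^{*}j_{*}\simeq\id$ on $\CC_{U}$), with kernel the thick subcategory $\p i_{*}\CC_{F}$, and admits a fully faithful right adjoint $\p j_{*}$; by the universal property of Serre quotients this identifies $\CC_{U}$ with $\CC/\p i_{*}\CC_{F}$. The main technical obstacle is the cohomological bookkeeping in (iv): one must verify the $t$-exactness properties of all six underived functors carefully before the long exact sequence of $\p H^{\bullet}$ can be truncated to the claimed four-term exact sequences, and in particular one must check that $i_{*}$ commutes with $\p H^{0}$ so that $\p i_{*}\p i^{*}A$ really equals $\p H^{0}(i_{*}i^{*}A)$.
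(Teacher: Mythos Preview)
Your proof is correct and follows the same approach that the paper indicates: it explicitly says, just before the proposition, that ``by definition of the $t$-structure of $\DC$, $j^*$ is $t$-exact, $i^*$ is right $t$-exact, and $i^!$ is left $t$-exact. Applying Proposition \ref{prop:pT}, we get the first two points,'' and the remaining items are left implicit (they are the standard arguments from \cite{BBD}, \S 1.4). Your write-up is simply a more detailed execution of that same strategy---deducing the $t$-exactness of $i_*$, $j_!$, $j_*$ from the adjunctions, then taking perverse cohomology of the recollement triangles---so there is nothing to correct or contrast.
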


Since $j^*$ is a quotient functor of triangulated categories, the
composition of the adjunction morphisms $j_! j^* \to \id \to j_* j^*$
comes from a unique morphism of functors $j_! \to j_*$. Applying
$j^*$, we get the identity automorphism of the identity functor.

Similarly, since the functor $\p j^*$ is a quotient functor of abelian
categories, the composition of the adjunction morphisms $\p j_! \p j^*
\to \id \to \p j_* \p j^*$ comes from a unique morphism of functors
$\p j_! \to \p j_*$. Applying $\p j^*$, we get the identity automorphism
of the identity functor.

Let $\p j_{!*}$ be the image of $\p j_!$ in $\p j_*$. We have a factorization
\begin{equation}
j_! \longto \p j_! \longto \p j_{!*} \longto \p j_* \longto j_*
\end{equation}

The following characterization of the functors $\p j_!$, $\p j_{!*}$
and $\p j_*$ will be very useful.

\begin{proposition}
\label{prop:characterization}
We have
\begin{alignat}{5}
\p j_! &\ =\ & \t^F_{\geqslant 0}\ j_! &\ =\ & \t^F_{\leqslant -2}\ j_*\\
\p j_{!*} &\ =\ & \t^F_{\geqslant 1}\ j_! &\ =\ & \t^F_{\leqslant -1}\ j_*\\
\p j_* &\ =\ & \t^F_{\geqslant 2}\ j_! &\ =\ & \t^F_{\leqslant 0}\ j_*
\end{alignat}
\end{proposition}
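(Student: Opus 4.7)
The plan is to compute $\p j_!$ and $\p j_*$ on $\CC_U$ using the decomposition of truncation functors from \eqref{eq:tFtU}, and then to match both sides of the three equalities using the uniqueness-of-extension proposition stated immediately before the present one.

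For $B \in \CC_U$, I would first observe that $j_! B$ lies in $\DC^{\leqslant 0}$: indeed $j^* j_! B = B \in \DC_U^{\leqslant 0}$ and $i^* j_! B = 0 \in \DC_F^{\leqslant 0}$. Hence $\p j_! B = \t_{\geqslant 0} j_! B = \t^F_{\geqslant 0} \t^U_{\geqslant 0} j_! B$ by \eqref{eq:tFtU}. The defining triangle of $\t^U_{\geqslant 0}$ reads $j_! \t_{<0} j^* j_! B \to j_! B \to \t^U_{\geqslant 0} j_! B$; but $j^* j_! B = B \in \DC_U^{\geqslant 0}$ forces $\t_{<0} B = 0$, so $\t^U_{\geqslant 0} j_! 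B = j_! B$ and $\p j_! B = \t^F_{\geqslant 0} j_! B$. A dual computation gives $\p j_* B = \t^F_{\leqslant 0} j_* B$. Next, applying the preceding unique-extension result with $Y = B$ and $n \in \{-1, 0, 1\}$ gives $\t^F_{\geqslant n} j_! B = \t^F_{\leqslant n - 2} j_* B$ for $n \in \{0, 1, 2\}$. Combined with the direct computation, this proves the first and third lines of the proposition and produces a natural candidate $j_{!*} B := \t^F_{\geqslant 1} j_! B = \t^F_{\leqslant -1} j_* B$ for $\p j_{!*} B$.

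It remains to prove the middle equality $\p j_{!*} B = j_{!*} B$, which is the main obstacle. First, $j_{!*} B \in \CC$ because $i^* j_{!*} B \in \DC_F^{\leqslant -1} \subset \DC_F^{\leqslant 0}$ and $i^! j_{!*} B \in \DC_F^{\geqslant 1} \subset \DC_F^{\geqslant 0}$. The natural transformations $\t^F_{\geqslant 0} \to \t^F_{\geqslant 1} \to \t^F_{\geqslant 2}$ produce a factorization $\p j_! B \to j_{!*} B \to \p j_* B$ whose image under $j^*$ is $\id_B$ at each step; the adjunction $(\p j_!, \p j^*)$ together with $\p j^* \p j_* = \id$ then gives $\Hom_\CC(\p j_! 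B, \p j_* B) \cong \End_{\CC_U}(B)$, and this forces the composite to coincide with the canonical morphism $\p j_! \to \p j_*$ constructed in the paper, since both correspond to $\id_B$. To check that this factorization realizes the epi-mono decomposition in $\CC$, I would apply the octahedron axiom to the composition $i_* \t_{<n} i^! X \to i_* \t_{<n+1} i^! X \to X$ to produce a distinguished triangle
\[
i_* H^n i^! X [-n] \longto \t^F_{\geqslant n} X \longto \t^F_{\geqslant n+1} X \rightsquigarrow
\]
For $X = j_! B$ and $n \in \{0, 1\}$, all three objects lie in $\CC$ (by $t$-exactness of $i_*$), yielding short exact sequences in $\CC$ that show $\p j_! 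B \onto j_{!*} B$ (for $n = 0$) and $j_{!*} B \injto \p j_* B$ (for $n = 1$). Hence $j_{!*} B$ is the image of $\p j_! B \to \p j_* B$ in $\CC$, that is, $\p j_{!*} B$.
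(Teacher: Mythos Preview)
Your argument is essentially correct and follows the route implicit in \cite{BBD} (the paper itself states this proposition without proof, as part of the material recalled from \cite{BBD}). There is one small slip in the last paragraph that you should fix.

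For $n = 1$, the triangle you write is
\[
i_* H^1 i^! j_! B\,[-1] \longto j_{!*} B \longto \p j_* B \rightsquigarrow
\]
and the leftmost term lies in $\CC[-1]$, not in $\CC$; so the claim ``all three objects lie in $\CC$'' is false for $n=1$. The conclusion you want is still correct: rotate the triangle to
\[
j_{!*} B \longto \p j_* B \longto i_* H^1 i^! j_! B \rightsquigarrow
\]
where now all three terms are in $\CC$, hence you get a short exact sequence in $\CC$ showing that $j_{!*} B \injto \p j_* B$ is a monomorphism. (Alternatively, and perhaps more naturally, you can run the dual argument on the $\t^F_{\leqslant n}$ side applied to $j_* B$, using the triangles \eqref{eq:tri ptF}: the triangle $\t^F_{\leqslant -1} j_* B \to \t^F_{\leqslant 0} j_* B \to i_* H^0 i^* j_* B$ has all terms in $\CC$ directly.) With this correction, your epi--mono factorization of $\p j_! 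B \to \p j_* B$ through $j_{!*} B$ is complete, and the identification $\p j_{!*} B = j_{!*} B$ follows.
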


So \eqref{eq:chain ptF} and \eqref{eq:tri ptF} now
read: we have a chain of morphisms:
\[
\p j_! \longto \p j_{!*} \longto \p j_*
\]
and distinguished triangles:
\begin{gather}
\label{tri:! !*}
\p j_! \longto \p j_{!*} \longto i_* H^{-1} i^* j_* [1] \rightsquigarrow\\
\label{tri:!* *}
\p j_{!*} \longto \p j_* \longto i_* H^0 i^* j_* \rightsquigarrow
\end{gather}

In other words,
for $A$ in $\CC$, the kernel and cokernel of $\p j_! A \to \p j_* A$
are in $\p i_* \CC_F$, and we have the following Yoneda splice of
two short exact sequences:
%{\scriptsize
\begin{equation*}
\xymatrix@C=.5cm @R=.1cm{
0\ar[r] &
i_* H^{-1} i^* j_* A \ar[r] &
\p j_! A \ar[dr] \ar[rr] &&
\p j_* A \ar[r] &
i_* H^0 i^* j_* A \ar[r] &
0\\
&&& \p j_{!*} A \ar[ur] \ar[dr]\\
&& 0 \ar[ur] && 0
}
\end{equation*}
%}

\begin{corollary}
For $A$ in $\CC_U$, $\p j_{!*} A$ is the unique extension $X$ of $A$ in
$\DC$ such that $i^* X$ is in $\DC_F^{\leqslant -1}$ and $i^! X$ is in
$\DC_F^{\geqslant 1}$. Thus it is the unique extension of $A$ in $\CC$
with no non-trivial subobject or quotient in $\p i_* \CC_F$.

Similarly, $\p j_! A$ (resp. $\p j_* A$) is the unique extension $X$
of $A$ in $\DC$ such that $i^* X$ is in $\DC_F^{\leqslant -2}$
(resp. $\DC_F^{\leqslant 0}$) and $i^! X$ is in $\DC_F^{\geqslant 0}$
(resp. $\DC_F^{\geqslant 2}$). In particular, $\p j_! A$ (resp. $\p
j_* A$) has no non-trivial quotient (resp. subobject) in $\p i_* \CC_F$.
\end{corollary}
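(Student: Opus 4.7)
The plan is to deduce each assertion directly from two earlier results: Proposition \ref{prop:characterization}, which identifies $\p j_!$, $\p j_{!*}$ and $\p j_*$ with the appropriate truncations $\t^F_{\leqslant n-1} j_*$ and $\t^F_{\geqslant n+1} j_!$, and the proposition immediately preceding it, which asserts that for each integer $n$ there is a unique extension $X$ of $A$ with $i^* X \in \DC_F^{\leqslant n-1}$ and $i^! X \in \DC_F^{\geqslant n+1}$, namely $\t^F_{\leqslant n-1} j_* A \simeq \t^F_{\geqslant n+1} j_! A$.

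First I would treat the three extensions in parallel. Taking $n = 0$ (respectively $n = -1$, $n = 1$) in the extension proposition, the unique extension $X$ of $A$ characterized by $i^* X \in \DC_F^{\leqslant -1}$ and $i^! X \in \DC_F^{\geqslant 1}$ (respectively $i^* X \in \DC_F^{\leqslant -2}$, $i^! X \in \DC_F^{\geqslant 0}$; and $i^* X \in \DC_F^{\leqslant 0}$, $i^! X \in \DC_F^{\geqslant 2}$) is $\t^F_{\leqslant -1} j_* A$ (respectively $\t^F_{\leqslant -2} j_* A$ and $\t^F_{\leqslant 0} j_* A$). By Proposition \ref{prop:characterization}, these are precisely $\p j_{!*} A$, $\p j_! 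A$ and $\p j_* A$, which gives the first assertion in each case. In particular, $\p j_{!*} A$ lies in the heart $\CC$ (as do the other two), since the vanishing conditions on $i^*$ and $i^!$ combined with $j^* X \simeq A \in \CC_U$ place $X$ in $\DC^{\leqslant 0} \cap \DC^{\geqslant 0}$ via \eqref{eq:recollement def <= 0}--\eqref{eq:recollement def >= 0}.

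The second sentence of each paragraph will be derived by translating the $t$-structural vanishing conditions into the vanishing of $\p i^*$ and $\p i^!$. For any $X$ in $\CC$, right $t$-exactness of $i^*$ gives $i^* X \in \DC_F^{\leqslant 0}$, so $\p i^* X = H^0 i^* X$; hence $\p i^* X = 0$ is equivalent to $i^* X \in \DC_F^{\leqslant -1}$. Dually, $\p i^! X = 0$ is equivalent to $i^! X \in \DC_F^{\geqslant 1}$. Combining this with Proposition \ref{prop:properties 6 functors} \eqref{it:thick subcat}, which says that $\p i_* \p i^* X$ (respectively $\p i_* \p i^! X$) is the largest quotient (respectively subobject) of $X$ in $\p i_* \CC_F$, one gets the desired characterization: $\p j_{!*} A$ has neither non-trivial quotient nor non-trivial subobject in $\p i_* \CC_F$, while $\p j_! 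A$ only has no non-trivial quotient and $\p j_* A$ only has no non-trivial subobject in $\p i_* \CC_F$.

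There is no real obstacle here; the proof is a bookkeeping exercise that merely transports the statements of the two cited propositions across the dictionary between $t$-structure truncations and vanishing of $\p i^*$, $\p i^!$. The only point to watch is that the uniqueness clause in the ``Thus'' version (uniqueness of $X$ in $\CC$ with no non-trivial subobject or quotient in $\p i_* \CC_F$) follows from the uniqueness in $\DC$ of the first assertion, since the conditions $\p i^* X = 0 = \p i^! X$ for $X \in \CC$ are equivalent to $i^* X \in \DC_F^{\leqslant -1}$ and $i^! X \in \DC_F^{\geqslant 1}$.
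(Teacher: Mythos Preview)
Your proposal is correct and is precisely the intended argument: the paper states this as a corollary without proof, because it follows immediately from combining the uniqueness proposition for $\t^F_{\leqslant n-1} j_* \simeq \t^F_{\geqslant n+1} j_!$ with Proposition~\ref{prop:characterization}, together with the dictionary (via Proposition~\ref{prop:properties 6 functors}~\eqref{it:thick subcat}) between $\p i^* X = 0$, $\p i^! X = 0$ and the absence of non-trivial quotients or subobjects in $\p i_* \CC_F$. Your handling of the uniqueness in $\CC$ versus in $\DC$ is also the right observation.
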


Building on the preceding results, it is now easy to get the following
description of the simple modules in $\CC$.

\begin{proposition}
The simple objects in $\CC$ are the $\p i_* S$, with $S$ simple in
$\CC_F$, and the $\p j_{!*} S$, for $S$ simple in $\CC_U$.
\end{proposition}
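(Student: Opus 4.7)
The plan is to handle the two constructions $\p i_* S$ and $\p j_{!*} S$ in turn and then show these exhaust the simples.

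First, for $S$ simple in $\CC_F$, I would verify that $\p i_* S$ is simple by exploiting Proposition \ref{prop:properties 6 functors} \eqref{it:thick subcat}: the essential image of $\p i_*$ is a thick (and in particular sub-closed) subcategory of $\CC$, and $\p i_*$ is exact and fully faithful. So any nonzero subobject of $\p i_* S$ lies in $\p i_* \CC_F$ and, by full faithfulness of $\p i_*$, corresponds to a nonzero subobject of $S$; since $S$ is simple this must be all of $\p i_* S$.

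Next, for $S$ simple in $\CC_U$, I would show $\p j_{!*} S$ is simple using the Corollary following Proposition \ref{prop:characterization}, which says $\p j_{!*} S$ has no non-trivial subobject or quotient in $\p i_* \CC_F$. Let $A \hookrightarrow \p j_{!*} S$ be a nonzero subobject. Since $\p j^*$ is exact, $\p j^* A \hookrightarrow \p j^* \p j_{!*} S = S$, so $\p j^* A$ is $0$ or $S$. If $\p j^* A = 0$ then $A$ belongs to $\p i_* \CC_F$ (by Proposition \ref{prop:properties 6 functors} \eqref{it:quotient}), contradicting the sub-free property of $\p j_{!*} S$ unless $A=0$. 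If $\p j^* A = S$, then $B := \p j_{!*} S / A$ satisfies $\p j^* B = 0$, so $B \in \p i_* \CC_F$; by the quotient-free property, $B = 0$ and $A = \p j_{!*} S$.

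Finally, I would show every simple object $A$ of $\CC$ is of one of these two forms. Using the surjection $A \twoheadrightarrow \p i_* \p i^* A$ from the exact sequence \eqref{eq:ex seq a}, simplicity of $A$ forces either $\p i_* \p i^* A = A$ (in which case $A = \p i_* S$ with $S = \p i^* A$; and the argument used for case 1 shows $S$ must be simple), or $\p i_* \p i^* A = 0$. In the latter case $A$ admits no nonzero quotient in $\p i_* \CC_F$; similarly, using \eqref{eq:ex seq b} and simplicity, $A$ admits no nonzero subobject in $\p i_* \CC_F$ (in particular $\p j^* A \ne 0$). By the Corollary to Proposition \ref{prop:characterization}, these two properties characterize $\p j_{!*} \p j^* A$ as the unique such extension of $\p j^* A$, so $A \simeq \p j_{!*} S$ with $S = \p j^* A$.

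The main technical point, and the step I expect to need the most care, is showing that $S = \p j^* A$ is itself simple. Given any nonzero proper subobject $T \subset S$, I would lift it to a subobject of $A$ by taking the image $B$ of the composite $\p j_! T \to \p j_! \p j^* A \to A$ (using right exactness of $\p j_!$ and the adjunction counit); then exactness of $\p j^*$ and the identity $\p j^* \p j_! = \Id$ give $\p j^* B = T$, so $0 \ne B \ne A$, contradicting simplicity of $A$. This yields the desired classification.
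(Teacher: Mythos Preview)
Your proof is correct and is precisely the argument the paper has in mind: the paper does not write out a proof but simply remarks that, building on the preceding results (the characterizing Corollary for $\p j_{!*}$ and the properties in Proposition \ref{prop:properties 6 functors}), the description is ``easy to get''. Your write-up is a faithful unpacking of that remark, including the one nontrivial point (simplicity of $\p j^* A$) which you handle correctly via the image of $\p j_! T \to A$.
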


Let $\SC$ (resp. $\SC_U$, $\SC_F$) denote the set of (isomorphisms
classes of) simple objects in $\CC$ (resp. $\CC_U$, $\CC_F$). So we have
$\SC = \p j_{!*} \SC_U \cup \p i_* \SC_F$.  Let us assume that $\CC$,
$\CC_U$ and $\CC_F$ are noetherian and artinian, so that the multiplicities of the
simple objects and the notion of composition length are well-defined.
Thus, if $B$ is an object in $\CC$, then we have the following
relation in the Grothendieck group $K_0(\CC)$:
\begin{equation}
\label{eq:def mult}
[B] = \sum_{T \in\SC} [B : T] \cdot [T]
\end{equation}

We will now show that $\p j_{!*}$ preserves multiplicities.

\begin{proposition}
\label{prop:IC mult}
If $B$ is an object in $\CC$, then we have
\begin{equation}\label{eq:j^*}
[B : \p j_{!*} S] = [j^* B : S]
\end{equation}
for all simple objects $S$ in $\CC_U$. 
In particular, if $A$ is an object in $\CC_U$, then we have
\begin{equation}\label{eq:j_!*}
[\p j_! A : \p j_{!*} S] = [\p j_{!*}A : \p j_{!*}S]
= [\p j_* A : \p j_{!*} S] = [A:S]
\end{equation}
\end{proposition}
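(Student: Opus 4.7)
The plan is to exploit the fact that $\p j^*$ is an exact quotient functor of abelian categories (Proposition \ref{prop:properties 6 functors}\eqref{it:adj p j^*} and \eqref{it:quotient}), which allows composition series to descend from $\CC$ to $\CC_U$ in a controlled way. Since $j^*$ is $t$-exact, $\p j^*$ is just the restriction of $j^*$ to the hearts, so $[j^* B : S] = [\p j^* B : S]$ for any $B \in \CC$.

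First, I would show that $\p j^*$ acts on the simple objects of $\CC$ as follows: for $T \in \SC_F$ one has $\p j^* (\p i_* T) = 0$ by Proposition \ref{prop:properties 6 functors}\eqref{it:comp 6f}; for $S' \in \SC_U$ one has $\p j^* (\p j_{!*} S') \simeq S'$, because $\p j^* \p j_! \simeq \id$ by Proposition \ref{prop:properties 6 functors}\eqref{it:fully faithful}, and $\p j_{!*} S'$ is a quotient of $\p j_! S'$ in $\CC$ with kernel in $\p i_* \CC_F$ (triangle \eqref{tri:! !*}), so applying the exact functor $\p j^*$ kills that kernel.

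Next, take any Jordan--Hölder series $0 = B_0 \subset B_1 \subset \cdots \subset B_n = B$ in $\CC$ with simple subquotients $B_k/B_{k-1} \in \SC = \p i_* \SC_F \sqcup \p j_{!*} \SC_U$. Applying the exact functor $\p j^*$ yields a filtration of $\p j^* B$ in $\CC_U$ whose subquotients are either $0$ (for factors of the form $\p i_* T$) or simple of the form $S'$ (for factors of the form $\p j_{!*} S'$). Counting multiplicities of a fixed simple $S \in \SC_U$ on both sides gives the formula $[B : \p j_{!*} S] = [\p j^* B : S] = [j^* B : S]$, which is \eqref{eq:j^*}.

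Finally, for \eqref{eq:j_!*} apply this identity to $B = \p j_! A$, $\p j_{!*} A$ and $\p j_* A$ successively, noting that in each case $j^* B \simeq A$: this holds for $\p j_!$ and $\p j_*$ by Proposition \ref{prop:properties 6 functors}\eqref{it:fully faithful}, and for $\p j_{!*}$ it follows because $\p j_{!*} A$ sits between $\p j_! A$ and $\p j_* A$ via the factorization $\p j_! \to \p j_{!*} \to \p j_*$, so the exactness of $\p j^*$ forces $\p j^* \p j_{!*} A \simeq A$. The main (minor) point to verify carefully is the behaviour of $\p j^*$ on the simples $\p j_{!*} S'$; everything else is bookkeeping via exactness and the triangles \eqref{tri:! !*}--\eqref{tri:!* *}.
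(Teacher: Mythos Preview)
Your proposal is correct and follows essentially the same approach as the paper: both exploit the exactness of $\p j^*$ and its effect on simple objects (killing $\p i_*\SC_F$ and sending $\p j_{!*}S' \mapsto S'$). The paper phrases the counting step as a Grothendieck group computation rather than an explicit Jordan--H\"older filtration, but this is a cosmetic difference.
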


\begin{proof}
The functor $j^*$ is exact, and sends a simple object $T$ on a simple
a simple object if $T\in \p j_{!*}\SC_U$, or on zero if $T \in \p i_* \SC_F$.
Moreover, it sends non-isomorphic simple objects in $\p j_{!*}\SC_U$ on
non-isomorphic simple objects in $\SC_U$.
Thus, applying $j^*$ to the relation \eqref{eq:def mult}, we get
\[
[j^* B] 
= \sum_{S \in \SC_U} [j^* B : S] \cdot [S]
= \sum_{S \in \SC_U} [B : \p j_{!*} S] \cdot [S]
\]
hence \eqref{eq:j^*}, and \eqref{eq:j_!*} follows.
\end{proof}

\begin{proposition}
\label{prop:inj surj}
The functor $\p j_{!*}$ preserves monomorphisms and epimorphisms.
\end{proposition}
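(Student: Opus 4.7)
The plan is to exploit the factorization $\p j_! \to \p j_{!*} \to \p j_*$, together with the exactness properties of $\p j_!$ and $\p j_*$ established in Proposition \ref{prop:properties 6 functors}\eqref{it:adj p j^*}: the functor $\p j_!$ is right exact (as a left adjoint) and the functor $\p j_*$ is left exact (as a right adjoint). The definition of $\p j_{!*}$ as the image of the natural transformation $\p j_! \to \p j_*$ automatically makes it functorial; concretely, for every object $A \in \CC_U$ we have a canonical factorization
\[
\p j_! A \twoheadrightarrow \p j_{!*} A \hookrightarrow \p j_* A,
\]
and the assignment $f \mapsto \p j_{!*}(f)$ fits into commutative squares with $\p j_!(f)$ and $\p j_*(f)$ by naturality of this factorization.

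To treat monomorphisms, I would start with a mono $f : A \hookrightarrow B$ in $\CC_U$. Left-exactness of $\p j_*$ makes $\p j_*(f) : \p j_* A \hookrightarrow \p j_* B$ a monomorphism. Composing with the canonical mono $\p j_{!*} A \hookrightarrow \p j_* A$ yields a monomorphism $\p j_{!*} A \hookrightarrow \p j_* B$, which by naturality equals the composition
\[
\p j_{!*} A \xrightarrow{\p j_{!*}(f)} \p j_{!*} B \hookrightarrow \p j_* B.
\]
Since a composition $g \circ h$ can be mono only if $h$ is mono, we conclude that $\p j_{!*}(f)$ is a monomorphism.

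The epimorphism case is strictly dual. For an epi $f : A \twoheadrightarrow B$ in $\CC_U$, right-exactness of $\p j_!$ makes $\p j_!(f) : \p j_! A \twoheadrightarrow \p j_! B$ an epimorphism; composing with the canonical epi $\p j_! B \twoheadrightarrow \p j_{!*} B$ yields an epimorphism $\p j_! A \twoheadrightarrow \p j_{!*} B$, which by naturality factors as
\[
\p j_! A \twoheadrightarrow \p j_{!*} A \xrightarrow{\p j_{!*}(f)} \p j_{!*} B.
\]
Since $g \circ h$ being epi forces $g$ to be epi, $\p j_{!*}(f)$ is an epimorphism.

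There is no real obstacle: the argument is purely formal once one has the factorization $\p j_! \twoheadrightarrow \p j_{!*} \hookrightarrow \p j_*$ and the exactness half-properties of the two outer functors. The only thing to double-check when writing the proof is the naturality of the two commutative squares
\[
\xymatrix@R=3mm{
\p j_! A \ar@{->>}[r] \ar[d]_{\p j_!(f)} & \p j_{!*} A \ar[d]^{\p j_{!*}(f)} \ar@{^{(}->}[r] & \p j_* A \ar[d]^{\p j_*(f)} \\
\p j_! B \ar@{->>}[r] & \p j_{!*} B \ar@{^{(}->}[r] & \p j_* B
}
\]
which is immediate from the construction of $\p j_{!*}$ as image of the natural transformation $\p j_! \to \p j_*$.
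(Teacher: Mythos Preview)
Your proof is correct and takes a genuinely different route from the paper's. The paper argues via the kernel: if $u:A\hookrightarrow B$ is mono, the kernel $K$ of $\p j_{!*}(u)$ satisfies $\p j^* K = 0$ (since $\p j^*$ is exact and $\p j^* \p j_{!*}(u) = u$ is mono), so $K\in \p i_*\CC_F$; but $\p j_{!*}A$ has no non-trivial subobject in $\p i_*\CC_F$, hence $K=0$. The epimorphism case is dual via cokernels.

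Your argument instead uses only the image factorization $\p j_! \twoheadrightarrow \p j_{!*} \hookrightarrow \p j_*$ together with left exactness of $\p j_*$ and right exactness of $\p j_!$; it is a purely formal ``image of a natural transformation from a right exact functor to a left exact functor preserves monos and epis'' statement. This is arguably cleaner: it avoids invoking the characterization of $\p j_{!*}A$ as having no subobjects or quotients supported on the closed part, and it makes transparent that the result holds in any abelian category whenever one has such a factorization. The paper's approach, on the other hand, ties the statement more directly to the recollement picture and the support condition that will be used repeatedly afterwards (e.g.\ in Proposition~\ref{prop:top socle}).
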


\begin{proof}
Let $u:A\to B$ be a monomorphism in $\CC_U$.
Let $K$ be the kernel of the morphism
$\p j_{!*}u : \p j_{!*}A \to \p j_{!*}B$ in $\CC$.
Since this morphism becomes a monomorphism after applying
$\p j^*$ (restriction to $U$), $K$ is in $\p i_* \CC_F$. But $K$
is a subobject of $\p j_{!*}A$, which has no non-trivial
subobject in $\p i_* \CC_F$. Hence $K = 0$ and $\p j_{!*}u$ is
a monomorphism.

Dually, let $v:A\to B$ be an epimorphism in $\CC_U$.
Let $C$ be the cokernel of the morphism
$\p j_{!*}v : \p j_{!*}A \to \p j_{!*}B$ in $\CC$.
Since this morphism becomes an epimorphism after applying
$\p j^*$ (restriction to $U$), $C$ is in $\p i_* \CC_F$. But $C$
is a quotient of $\p j_{!*}B$, which has no non-trivial
quotient in $\p i_* \CC_F$. Hence $C = 0$ and $\p j_{!*}v$ is
an epimorphism.
\end{proof}

\begin{proposition}
\label{prop:top socle}
Let $A$ be an object of $\CC_U$. Then we have
\begin{gather*}
\Soc \p j_{!*} A
\simeq \Soc \p j_* A
\simeq \p j_{!*} \Soc A\\
\Top \p j_! A
\simeq \Top \p j_{!*} A
\simeq \p j_{!*} \Top A
\end{gather*}
\end{proposition}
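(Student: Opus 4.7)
The plan is to prove the socle chain of isomorphisms first, and then deduce the top chain from the formal $!/*$ duality of the recollement (passing to opposite categories exchanges $\p j_!$ with $\p j_*$ while fixing $\p j_{!*}$, and turns socles into tops).

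For the socle, the starting observation is that $\p j_* A$ has no nonzero subobject in $\p i_* \CC_F$ (corollary preceding the statement), and since $\p j_{!*} A \hookrightarrow \p j_* A$ the same holds for $\p j_{!*} A$. Combined with the classification of simple objects of $\CC$ as either $\p i_* T$ or $\p j_{!*} T$, this forces both $\Soc \p j_{!*} A$ and $\Soc \p j_* A$ to be direct sums of objects $\p j_{!*} S_i$ with $S_i$ simple in $\CC_U$. I then establish matching inclusions in both directions. For the first, apply $\p j_{!*}$ (which preserves monomorphisms by Proposition~\ref{prop:inj surj} and carries simples to simples) to the inclusion $\Soc A \hookrightarrow A$, and compose with $\p j_{!*} A \hookrightarrow \p j_* A$; this exhibits $\p j_{!*}\Soc A$ as a semisimple subobject of both $\p j_{!*} A$ and $\p j_* A$, yielding $\p j_{!*}\Soc A \hookrightarrow \Soc \p j_{!*} A \hookrightarrow \Soc \p j_* A$. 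For the reverse, write $\Soc \p j_* A = \bigoplus_i \p j_{!*} S_i$, apply the exact functor $\p j^*$ (with $\p j^* \p j_{!*} = \id$) to $\Soc \p j_* A \hookrightarrow \p j_* A$ to obtain a semisimple subobject $\bigoplus_i S_i \hookrightarrow A$ which is necessarily contained in $\Soc A$, and apply $\p j_{!*}$ once more to obtain $\Soc \p j_* A \hookrightarrow \p j_{!*}\Soc A$. In the noetherian-artinian context of this subsection, mutual inclusions between finite-length objects are isomorphisms by a Jordan--Hölder length count, closing the socle chain.

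The top chain is obtained by running the dual argument in the opposite categories, where the data of Assumption~\ref{ass:recollement} still form a recollement (with $\p j_!$ and $\p j_*$ exchanged and $\p j_{!*}$ preserved) and socles become tops; alternatively, one can rerun the argument directly, using that $\p j_! A$ and $\p j_{!*} A$ have no nonzero quotient in $\p i_* \CC_F$, that $\p j_{!*}$ preserves epimorphisms, and that $\p j^*$ is exact, to obtain mutual surjections $\Top \p j_! A \twoheadrightarrow \Top \p j_{!*} A \twoheadrightarrow \p j_{!*}\Top A \twoheadrightarrow \Top \p j_! A$ and conclude by length count. I expect the only real obstacle to be bookkeeping: each comparison must be realized through the canonical arrow $\p j_{!*} A \hookrightarrow \p j_* A$ (respectively $\p j_! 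A \twoheadrightarrow \p j_{!*} A$ in the dual), and the direction of every mono and epi must be tracked carefully; beyond that there is no deeper conceptual difficulty.
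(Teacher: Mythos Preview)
Your proof is correct and follows essentially the same route as the paper: both arguments use that $\p j_{!*}$ preserves monomorphisms (Proposition~\ref{prop:inj surj}), that $\p j^*$ is exact with $\p j^*\p j_{!*}\simeq\id$, and that $\Soc\p j_*A$ has no simple constituent in $\p i_*\CC_F$, to produce a cycle of monomorphisms among $\p j_{!*}\Soc A$, $\Soc\p j_{!*}A$, and $\Soc\p j_*A$. The only cosmetic difference is that the paper tracks these three as subobjects of $\p j_*A$ and concludes equality directly from the inclusion cycle, whereas you close the cycle with a length count; since the finite-length hypothesis is already in force in this subsection, both conclusions are valid.
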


\begin{proof}
By definition, $\p j_{!*} A$ is a subobject of $\p j_* A$.
Taking socles, we get $\Soc \p j_{!*} A \subset \Soc \p j_* A$
as subobjects of $\p j_* A$.

By applying the exact functor $\p j^*$ to the monomorphism
$\Soc \p j_* A \subset \p j_* A$, we get a monomorphism
$\p j^* \Soc \p j_* A \subset A$.
But $\p j^* \Soc \p j_* A$ is semisimple, so we get
$\p j^* \Soc \p j_* A \subset \Soc A$ as subobjects of $A$.
Thus, by Proposition \ref{prop:inj surj},
we have $\p j_{!*} \p j^* \Soc \p j_* A \subset \p j_{!*} \Soc A$
as subobjects of $\p j_* A$.
Now, we have $\p j_{!*} \p j^* \Soc \p j_* A = \Soc \p j_* A$
because $\Soc \p j_* A$ is a semisimple object with no simple
constituent in $\p i_* \CC_F$. Hence $\Soc \p j_* A \subset \p j_{!*} \Soc A$
as subobjects of $\p j_* A$.

By Proposition \ref{prop:inj surj}, if we apply the functor $\p
j_{!*}$ to the monomorphism $\Soc A \subset A$, we get a monomorphism
$\p j_{!*}\Soc A \subset \p j_{!*} A$.
But  $\p j_{!*}\Soc A$ is semisimple, so we get
$\p j_{!*}\Soc A \subset \Soc \p j_{!*} A$
as subobjects of $\p j_* A$.

This proves the first relation, and the second one is dual.
\end{proof}

\begin{proposition}
\label{prop:j_!* fully faithful}
The functor $\p j_{!*}$ is fully faithful.
\end{proposition}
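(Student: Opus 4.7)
The plan is to exhibit an explicit left inverse to the map $\Hom_{\CC_U}(A,B) \to \Hom_\CC(\p j_{!*} A, \p j_{!*} B)$ induced by $\p j_{!*}$, and then show this left inverse is also injective. The natural isomorphisms $\p j^* \p j_! \simeq \Id \simeq \p j^* \p j_*$ from Proposition \ref{prop:properties 6 functors} \eqref{it:fully faithful}, together with the factorization $\p j_! \to \p j_{!*} \to \p j_*$ and the exactness of $\p j^*$, force the identity $\p j^* \p j_{!*} \simeq \Id_{\CC_U}$. By naturality, the composition
\[
\Hom_{\CC_U}(A,B) \xrightarrow{\ \p j_{!*}\ } \Hom_\CC(\p j_{!*} A, \p j_{!*} B) \xrightarrow{\ \p j^*\ } \Hom_{\CC_U}(A,B)
\]
is the identity of $\Hom_{\CC_U}(A,B)$. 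In particular, the first arrow is injective (which is faithfulness of $\p j_{!*}$). To obtain fullness, it then suffices to prove that the second arrow $\p j^*$ is injective as well, since then both arrows will be bijective.

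So the key step is the following. Suppose $f : \p j_{!*} A \to \p j_{!*} B$ satisfies $\p j^* f = 0$. Since $\p j^*$ is exact, $\p j^*(\im f) = \im(\p j^* f) = 0$. By Proposition \ref{prop:properties 6 functors} \eqref{it:quotient}, an object of $\CC$ killed by $\p j^*$ lies in $\p i_* \CC_F$, so $\im f \in \p i_* \CC_F$. But $\im f$ is a subobject of $\p j_{!*} B$, and by the corollary characterizing $\p j_{!*} B$ as the unique extension of $B$ with no non-trivial subobject or quotient in $\p i_* \CC_F$, we conclude $\im f = 0$, hence $f = 0$.

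The main ingredient is therefore precisely the characterization that $\p j_{!*} B$ has no non-trivial subobject in $\p i_* \CC_F$, which was already established earlier; the rest of the argument is formal and requires no further computation. There is no real obstacle once that structural fact is in hand: full faithfulness of $\p j_{!*}$ is, in effect, the intrinsic translation of the fact that $\p j^*$ identifies $\CC_U$ with the Serre quotient $\CC / \p i_* \CC_F$ and $\p j_{!*}$ is a canonical section on objects of this quotient. One may also note that this argument is strictly parallel to the faithfulness and fullness arguments implicit in Proposition \ref{prop:inj surj}, where the same key property of $\p j_{!*} B$ (no non-trivial sub- or quotient objects in $\p i_* \CC_F$) was used to control kernels and cokernels.
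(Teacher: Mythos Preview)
Your proof is correct and takes a genuinely different route from the paper's. The paper proceeds by a two-step chain of isomorphisms on Hom-sets: first it uses the short exact sequence $0 \to K \to \p j_! A \to \p j_{!*} A \to 0$ (with $K \in \p i_*\CC_F$) together with the vanishing $\Hom_\CC(K,\p j_{!*}B)=0$ to identify $\Hom_\CC(\p j_{!*}A,\p j_{!*}B)\simeq\Hom_\CC(\p j_! A,\p j_{!*}B)$; then it uses the short exact sequence $0 \to \p j_{!*}B \to \p j_* B \to C \to 0$ (with $C\in\p i_*\CC_F$) together with $\Hom_\CC(\p j_! A,C)=0$ to pass to $\Hom_\CC(\p j_! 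A,\p j_* B)$, which by adjunction and $\p j^*\p j_*\simeq\Id$ is $\Hom_{\CC_U}(A,B)$. Your argument bypasses this chain entirely: you observe directly that $\p j^*$ is a retraction of $\p j_{!*}$ on Hom-sets, and then show $\p j^*$ is injective there by noting that any $f$ with $\p j^* f = 0$ has image in $\p i_*\CC_F$, hence zero image in $\p j_{!*}B$. Both arguments ultimately rest on the same structural fact (no nontrivial subobject of $\p j_{!*}B$ in $\p i_*\CC_F$), but yours is shorter and more transparently expresses $\p j_{!*}$ as a section of the quotient functor $\p j^*$; the paper's version has the mild advantage of exhibiting the isomorphism $\Hom_\CC(\p j_{!*}A,\p j_{!*}B)\simeq\Hom_{\CC_U}(A,B)$ explicitly through adjunctions, which can be useful if one wants to track the map rather than just its bijectivity.
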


\begin{proof}
Let $A$ and $B$ be two objects in $\CC_U$.
Applying the left exact functor $\Hom_\CC(-,\p j_{!*} B)$
to the short exact sequence
\[
0 \longto
\p i_* \p i^! A \longto \p j_! A \longto \p j_{!*} A
\longto 0
\]
we get an exact sequence
\[
0 \longto \Hom_\CC(\p j_{!*} A, \p j_{!*} B)
\longto \Hom_\CC(\p j_! A, \p j_{!*} B)
\longto \Hom_\CC(\p i_* \p i^! A, \p j_{!*} B)
\]
Since $\p j_{!*} B$ has no non-trivial subobject in $\p i_* \CC_F$, we
deduce that 
\[
\Hom_\CC(\p i_* \p i^! A, \p j_{!*} B) = 0
\]
and thus we have an isomorphism
\[
\Hom_\CC(\p j_{!*} A, \p j_{!*} B)
\elem{\sim} \Hom_\CC(\p j_! A, \p j_{!*} B)
\]

Similarly, applying the left exact functor $\Hom_\CC(\p j_! A,-)$
to the short exact sequence
\[
0 \longto \p j_{!*} B \longto \p j_* B \longto \p i_* \p i^* B 
\longto 0
\]
and using the relation $\Hom_\CC(\p j_! A, \p i_* \p i^* B) = 0$,
we get an isomorphism 
\[
\Hom_\CC(\p j_! A, \p j_{!*} B) \elem{\sim}
\Hom_\CC(\p j_! A, \p j_* B)
\]
Now using 
Proposition \ref{prop:properties 6 functors} \eqref{it:adj p j^*}
and \eqref{it:fully faithful},
%adjunction $(\p j_!, \p j^*)$ (or
%$(\p j^*, \p j_*)$) and 
%the fact that $\p j_!$
%(or $\p j_*$) is fully faithful,
we see that the latter
Hom space is isomorphic to $\Hom_{\CC_U}(A,B)$.
Hence $\p j_{!*}$ is fully faithful.
\end{proof}

\subsection{Torsion theories and recollement}
\label{subsec:tt recollement}

We will see now how to glue torsion theories in the recollement procedure.

\begin{proposition}
\label{prop:tt recollement}
Suppose we are in a recollement situation as in Subsection
\ref{subsec:recollement}, and that we are given
torsion theories $(\TC_F, \FC_F)$ and
$(\TC_U, \FC_U)$ of $\CC_F$ and $\CC_U$. Then we can define a torsion theory
$(\TC, \FC)$ on $\CC$ by
\begin{gather}
\TC = \{ T \in \CC \mid \p i^* T \in \TC_F \text{ and } \p j^* T \in \TC_U \}\\
\FC = \{ L \in \CC \mid \p i^! L \in \FC_F \text{ and } \p j^* L \in \FC_U \}
\end{gather}
\end{proposition}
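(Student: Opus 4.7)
The plan is to verify the two axioms of Definition~\ref{def:tt} by leveraging the six adjoint functors from Proposition~\ref{prop:properties 6 functors} together with the given torsion theories on $\CC_U$ and $\CC_F$.

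For axiom~(i), I would take $f : T \to F$ with $T \in \TC$, $F \in \FC$ and reduce to the two given axioms in turn. Since $\p j^*$ is exact, $\p j^* f$ lies in $\Hom_{\CC_U}(\p j^* T, \p j^* F)$, which vanishes because $\p j^* T \in \TC_U$ and $\p j^* F \in \FC_U$. By the $(\p j^*, \p j_*)$ adjunction this forces the composite $T \to F \to \p j_* \p j^* F$ to be zero, so exact sequence~\eqref{eq:ex seq b} implies that $f$ factors uniquely through the inclusion $\p i_* \p i^! F \hookrightarrow F$ as some $\tilde f : T \to \p i_* \p i^! F$. The $(\p i^*, \p i_*)$ adjunction rewrites $\tilde f$ as a morphism $\p i^* T \to \p i^! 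F$, which vanishes since $\p i^* T \in \TC_F$ and $\p i^! F \in \FC_F$; hence $f = 0$.

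For axiom~(ii), given $A \in \CC$, the plan is to construct the required short exact sequence in two stages, first correcting on the open part, then on the closed part. Decompose $\p j^* A$ via $(\TC_U, \FC_U)$ as $0 \to T_U \to \p j^* A \to L_U \to 0$, and set $A_U := \ker(A \to \p j_* L_U)$ with the map $A \to \p j_* \p j^* A \to \p j_* L_U$. Then $\p j^* A_U = T_U$ by exactness of $\p j^*$ together with $\p j^* \p j_* = \id$, while the quotient $B := A/A_U$ embeds into $\p j_* L_U$. From $\p j^* B = L_U$, left-exactness of $\p i^!$, and the vanishing $\p i^! \p j_* = 0$ of Proposition~\ref{prop:properties 6 functors}(iii), one gets $\p i^! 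B = 0$, so $B$ already lies in $\FC$. Next, decompose $\p i^* A_U$ via $(\TC_F, \FC_F)$ as $0 \to T_F \to \p i^* A_U \to L_F \to 0$, and let $T := \ker(A_U \to \p i_* L_F)$, where the map is the surjection $A_U \twoheadrightarrow \p i_* \p i^* A_U$ from~\eqref{eq:ex seq a} composed with $\p i_*$ of $\p i^* A_U \to L_F$. The long exact sequence of $i^*$ applied to $0 \to T \to A_U \to \p i_* L_F \to 0$ collapses, since $i^* \p i_* L_F = L_F$ in degree zero (as $i^* i_* = \id$ at the derived level), to the short exact sequence $0 \to \p i^* T \to \p i^* A_U \to L_F \to 0$, yielding $\p i^* T = T_F \in \TC_F$. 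Combined with $\p j^* T = T_U \in \TC_U$ (as $A_U/T \in \p i_* \CC_F$ is annihilated by $\p j^*$), this shows $T \in \TC$.

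Finally, setting $L := A/T$ one obtains $0 \to \p i_* L_F \to L \to B \to 0$. Applying $\p j^*$ gives $\p j^* L = \p j^* B = L_U \in \FC_U$, and the long exact sequence of $i^!$, using $i^! \p i_* L_F = L_F$ in degree zero (by $i^! i_* = \id$), the vanishing of $H^{-1} i^! B$ (since $B$ lies in the heart and $i^!$ is left $t$-exact), and the already-established $\p i^! B = 0$, collapses to $\p i^! L = L_F \in \FC_F$, so $L \in \FC$. The hardest conceptual point is the asymmetry between $\TC$, which is controlled by the right-exact $\p i^*$, and $\FC$, which is controlled by the left-exact $\p i^!$; this forces precisely the above ordering of the two corrections, since one must first carve out an $F$-trivial quotient via $\p j_* L_U$ in order that the subsequent $F$-side adjustment on the subobject cannot destroy the closure properties already secured for $B$.
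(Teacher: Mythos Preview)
Your proof is correct. For axiom~(i) you argue dually to the paper's Lemma~\ref{lemma:Hom(T,L) = 0}: instead of presenting $T$ via $\p j_! \p j^* T \to T \to \p i_* \p i^* T \to 0$ and applying $\Hom(-,L)$, you copresent $L$ via $0\to \p i_*\p i^! L \to L \to \p j_*\p j^* L$ and apply $\Hom(T,-)$; the two arguments are formally dual.

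For axiom~(ii) the high-level strategy is the same---first correct on $U$, then on $F$---and your $A_U$ coincides with the object the paper calls $B$. The execution, however, is genuinely different and in fact more economical. The paper introduces an auxiliary subobject $S = \im(\p j_! (\p j^* A)_\tors \to A)$, proves that $M := A_U/S$ lies in $\p i_*\CC_F$, and only then applies the torsion theory on $\CC_F$ to $M$; membership of the resulting pieces in $\TC$ and $\FC$ is then deduced from the closure properties established in Lemmas~\ref{lemma:stab rec tt} and~\ref{lemma:direct images rec tt}. You bypass all of this: the observation that $A/A_U \hookrightarrow \p j_* L_U$ forces $\p i^!(A/A_U)=0$ immediately places the quotient in $\FC$, and working with $\p i^* A_U$ directly (rather than with the quotient $M$) lets you read off $\p i^* T$ and $\p i^! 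L$ from the long exact sequences of $i^*$ and $i^!$, using only $i^* i_* = i^! i_* = \id$. What the paper's route buys is modularity---the stability lemmas are stated separately and reused later---whereas your route buys a self-contained argument that needs no preparatory lemmas at all.
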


Let us begin by some lemmas.

\begin{lemma}
\label{lemma:stab rec tt}
The subcategory $\TC$ (resp. $\FC$) of $\CC$
is closed under quotients and extensions
(resp. under subobjects and extensions).
\end{lemma}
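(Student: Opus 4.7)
The plan is to reduce each of the four claims (closure of $\TC$ under quotients and extensions, closure of $\FC$ under subobjects and extensions) to the corresponding closure property of $(\TC_F,\FC_F)$ and $(\TC_U,\FC_U)$ in $\CC_F$ and $\CC_U$, using the exactness properties of the six functors from Proposition \ref{prop:properties 6 functors}: the functor $\p j^*$ is exact, $\p i^*$ is right exact, and $\p i^!$ is left exact.

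For closure of $\TC$ under quotients, I would take an epimorphism $T \onto T'$ in $\CC$ with $T \in \TC$. Applying the exact functor $\p j^*$ and the right exact functor $\p i^*$ yields epimorphisms $\p j^* T \onto \p j^* T'$ in $\CC_U$ and $\p i^* T \onto \p i^* T'$ in $\CC_F$; since torsion classes are closed under quotients (Proposition \ref{prop:properties tt}\eqref{it:stab tt}), we get $\p j^* T' \in \TC_U$ and $\p i^* T' \in \TC_F$, hence $T' \in \TC$. Dually, for closure of $\FC$ under subobjects, a monomorphism $L' \injto L$ with $L \in \FC$ gives monomorphisms $\p j^* L' \injto \p j^* L$ (by exactness) and $\p i^! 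L' \injto \p i^! L$ (by left exactness), and the closure of torsion-free classes under subobjects gives $L' \in \FC$.

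Closure under extensions is the point where the one-sided exactness of $\p i^*$ and $\p i^!$ requires a small extra argument, and this is where I expect the only real (but still mild) obstacle. Given a short exact sequence $0 \to T' \to T \to T'' \to 0$ with $T',T'' \in \TC$, the functor $\p j^*$ being exact yields a short exact sequence in $\CC_U$ with both ends in $\TC_U$, so $\p j^* T \in \TC_U$ by extension-closure. The right exact $\p i^*$ only gives an exact sequence $\p i^* T' \to \p i^* T \to \p i^* T'' \to 0$; setting $I := \im(\p i^* T' \to \p i^* T)$, the object $I$ is a quotient of $\p i^* T' \in \TC_F$, hence lies in $\TC_F$, and we have a short exact sequence $0 \to I \to \p i^* T \to \p i^* T'' \to 0$ with both ends in $\TC_F$, which by extension-closure puts $\p i^* T$ in $\TC_F$, so $T \in \TC$. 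The argument for $\FC$ is formally dual, replacing $\p i^*$ by the left exact $\p i^!$ and using closure of $\FC_F$ under subobjects to handle the image term.

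Putting these four arguments together establishes the lemma. No other ingredients are needed: only the exactness properties recalled in Proposition \ref{prop:properties 6 functors} and the general stability properties of the two halves of a torsion theory from Proposition \ref{prop:properties tt}\eqref{it:stab tt}.
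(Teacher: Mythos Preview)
Your proposal is correct and follows essentially the same approach as the paper: both arguments use the exactness of $\p j^*$, the right exactness of $\p i^*$, and the left exactness of $\p i^!$ together with Proposition~\ref{prop:properties tt}\eqref{it:stab tt}, and both handle the extension case for $\TC$ by observing that $\p i^*$ applied to the short exact sequence exhibits $\p i^* T$ as an extension of $\p i^* T''$ by a quotient of $\p i^* T'$ (your image $I$). The paper simply phrases all cases in terms of a single short exact sequence $0 \to S \to A \to Q \to 0$, but the content is identical.
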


\begin{proof}
Let us consider a short exact sequence in $\CC$
\[
0 \longto S \longto A \longto Q \longto 0
\]

Applying the functors $\p i^*$, $\p j^*$ and $\p i^!$, we get three
exact sequences:
\begin{gather}
\label{ses:p i^*}
\p i^* S \longto \p i^* A \longto \p i^* Q \longto 0
\\
\label{ses:p j^*}
0 \longto \p j^* S \longto \p j^* A \longto \p j^* Q \longto 0
\\
\label{ses:p i^!}
0 \longto \p i^! S \longto \p i^! A \longto \p i^! Q
\end{gather}

Let us first assume that $A$ is in $\TC$, and let us show that $Q$ is
also in $\TC$. We have to show that $\p i^*Q$ is in $\TC_F$ and that
$\p j^* Q$ is in $\TC_U$. This follows from Proposition
\ref{prop:properties tt}, since $\p i^*Q$ is a quotient of
$\p i^*A$ and $\p j^* Q$ is quotient of $\p j^* A$.

Secondly, suppose that $S$ and $Q$ are in $\TC$, and let us show that
$A$ is also in $\TC$. We have to show that $\p i^*A$ is in $\TC_F$ and that
$\p j^* A$ is in $\TC_U$. This follows also from Proposition
\ref{prop:properties tt}, since $\p i^* A$ is an extension
of $\p i^* Q$ by a quotient of $\p i^* S$, and 
$\p j^* A$ is an extension of $\p j^* Q$ by $\p j^* S$.

The proofs for the statements about $\FC$ are dual.

\end{proof}

\begin{lemma}
\label{lemma:direct images rec tt}
We have
\begin{gather*}
\p i_*(\TC_F) \subset \TC\qquad
\p j_!(\TC_U)\subset \TC\qquad
\p j_{!*}(\TC_U) \subset \TC\\
\p i_*(\FC_F) \subset \FC\qquad
\p j_*(\FC_U) \subset \FC\qquad
\p j_{!*}(\FC_U) \subset \FC
\end{gather*}
\end{lemma}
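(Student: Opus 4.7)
The plan is to verify the six inclusions directly from the definitions of $\TC$ and $\FC$, using two ingredients: the vanishing/fully faithfulness relations of Proposition \ref{prop:properties 6 functors} (which compute $\p i^*$, $\p i^!$, $\p j^*$ applied to each of $\p i_*$, $\p j_!$, $\p j_*$), and Lemma \ref{lemma:stab rec tt} (closure of $\TC$ under quotients and $\FC$ under subobjects) to handle the $\p j_{!*}$ cases via the factorization $\p j_! \onto \p j_{!*} \injto \p j_*$.

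Concretely, for $T \in \TC_F$, I would compute $\p i^* \p i_* T \simeq T \in \TC_F$ and $\p j^* \p i_* T = 0 \in \TC_U$ (the zero object lies in any torsion class), giving $\p i_* T \in \TC$; the argument for $\p i_*(\FC_F) \subset \FC$ is identical, using $\p i^! \p i_* \simeq \Id$ and $\p j^* \p i_* = 0$. For $U \in \TC_U$, the fully faithfulness $\p j^* \p j_! \simeq \Id$ combined with $\p i^* \p j_! = 0$ places $\p j_! U$ in $\TC$; dually, $\p j^* \p j_* \simeq \Id$ and $\p i^! \p j_* = 0$ place $\p j_* U \in \FC$ whenever $U \in \FC_U$.

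For the two statements involving $\p j_{!*}$, I would invoke the factorization from Proposition \ref{prop:characterization} (and the ensuing discussion): $\p j_{!*} U$ is a quotient of $\p j_! U$ and a subobject of $\p j_* U$. So if $U \in \TC_U$ then $\p j_! U \in \TC$ by the previous step, and $\p j_{!*} U \in \TC$ by closure of $\TC$ under quotients (Lemma \ref{lemma:stab rec tt}). Dually, if $U \in \FC_U$ then $\p j_* U \in \FC$, and $\p j_{!*} U \in \FC$ by closure of $\FC$ under subobjects.

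There is no real obstacle here; the entire statement is a formal consequence of the previous lemma and the standard recollement identities. The only thing to be careful about is citing the correct vanishing statement ($\p j^* \p i_* = 0$, $\p i^* \p j_! = 0$, $\p i^! \p j_* = 0$) for each of the six containments, and noting that $0$ belongs to both $\TC_\bullet$ and $\FC_\bullet$ so the ``other'' half of each membership condition is automatic.
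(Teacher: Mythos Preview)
Your proposal is correct and follows exactly the same approach as the paper: the paper's proof is a one-line citation of Proposition~\ref{prop:properties 6 functors}~\eqref{it:comp 6f} and~\eqref{it:fully faithful}, the definition of $(\TC,\FC)$, the definition of $\p j_{!*}$, and Lemma~\ref{lemma:stab rec tt}, which is precisely the argument you spell out in detail.
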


\begin{proof}
This follows from Proposition \ref{prop:properties 6 functors}
\eqref{it:comp 6f} and \eqref{it:fully faithful},
the definition of $(\TC, \FC)$, the definition of $\p j_{!*}$, and
Lemma \ref{lemma:stab rec tt}.
\end{proof}

\begin{lemma}
\label{lemma:Hom(T,L) = 0}
If $T \in \TC$ and $L \in \FC$, then we have
$\Hom_\CC(T,L) = 0$.
\end{lemma}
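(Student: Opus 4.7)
The plan is to exploit the two canonical exact sequences from Proposition \ref{prop:properties 6 functors} \eqref{it:ex seq} to reduce $\Hom_\CC(T,L)$ to vanishing pieces coming from the torsion theories on $\CC_U$ and $\CC_F$ separately. By hypothesis, $\p i^* T \in \TC_F$, $\p j^* T \in \TC_U$, $\p i^! L \in \FC_F$, and $\p j^* L \in \FC_U$.

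First I would split the four-term exact sequence \eqref{eq:ex seq a} for $T$ into two short exact sequences by introducing the image $K$ of $\p j_! \p j^* T \to T$, namely
\[
0 \longto \p i_* H^{-1} i^* T \longto \p j_! \p j^* T \longto K \longto 0
\quad\text{and}\quad
0 \longto K \longto T \longto \p i_* \p i^* T \longto 0.
\]
Next, applying the left-exact functor $\Hom_\CC(-,L)$ to the second short exact sequence produces the exact sequence
\[
0 \longto \Hom_\CC(\p i_* \p i^* T, L) \longto \Hom_\CC(T,L) \longto \Hom_\CC(K,L).
\]
The leftmost term vanishes: by the adjunction $(\p i^*, \p i_*) = (\p i_*, \p i^!)$ of Proposition \ref{prop:properties 6 functors} \eqref{it:adj p i_*} (using $\p i_* = \p i_!$, since $\p i_*$ is exact), one has $\Hom_\CC(\p i_* \p i^* T, L) \simeq \Hom_{\CC_F}(\p i^* T, \p i^! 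L)$, and this is zero by the defining property of the torsion theory $(\TC_F,\FC_F)$. Hence $\Hom_\CC(T,L) \hookrightarrow \Hom_\CC(K,L)$.

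Applying $\Hom_\CC(-,L)$ to the first short exact sequence likewise yields $\Hom_\CC(K,L) \hookrightarrow \Hom_\CC(\p j_! \p j^* T, L)$, and the adjunction $(\p j_!, \p j^*)$ of Proposition \ref{prop:properties 6 functors} \eqref{it:adj p j^*} gives $\Hom_\CC(\p j_! \p j^* T, L) \simeq \Hom_{\CC_U}(\p j^* T, \p j^* L) = 0$ by the defining property of $(\TC_U,\FC_U)$. Combining these injections forces $\Hom_\CC(T,L) = 0$. No serious obstacle is expected; the only point to be careful about is the correct bookkeeping of adjunctions, in particular that $\Hom_\CC(\p i_* -, -) \simeq \Hom_{\CC_F}(-,\p i^! -)$ on the abelian level, which is exactly Proposition \ref{prop:properties 6 functors} \eqref{it:adj p i_*}.
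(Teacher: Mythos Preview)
Your proof is correct and follows essentially the same approach as the paper: apply $\Hom_\CC(-,L)$ to the exact sequence \eqref{eq:ex seq a}, then use the adjunctions $(\p i_*,\p i^!)$ and $(\p j_!,\p j^*)$ together with the torsion-theory vanishing on $\CC_F$ and $\CC_U$. The only difference is cosmetic: the paper works directly with the right-exact three-term piece $\p j_! \p j^* T \to T \to \p i_* \p i^* T \to 0$, so there is no need to introduce the intermediate image $K$ or the leftmost term $\p i_* H^{-1} i^* T$ at all; your splitting into two short exact sequences is harmless but unnecessary. (Also, your parenthetical ``using $\p i_* = \p i_!$, since $\p i_*$ is exact'' is not needed---the adjunction $\Hom_\CC(\p i_* A, B)\simeq \Hom_{\CC_F}(A,\p i^! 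B)$ is already part of Proposition~\ref{prop:properties 6 functors}~\eqref{it:adj p i_*}.)
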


\begin{proof}
By Proposition
\ref{prop:properties 6 functors} \eqref{it:ex seq},
we have an exact sequence \eqref{eq:ex seq a}
\[
\p j_! \p j^* T \longto T \longto \p i_* \p i^* T \longto 0
\]
Applying the functor $\Hom_{\CC}(-,L)$, which is left exact,
we get an exact sequence
\[
0 \longto \Hom_{\CC}(\p i_* \p i^* T, L)
\longto \Hom_{\CC}(T, L) 
\longto \Hom_{\CC}(\p j_! \p j^* T, L) 
\]
By the adjunctions of Proposition
\ref{prop:properties 6 functors} \eqref{it:adj p i_*} and
\eqref{it:adj p j^*}, this
becomes
\[
0 \longto \Hom_{\CC_F}(\p i^* T, \p i^! L)
\longto \Hom_{\CC}(T, L) 
\longto \Hom_{\CC_U}(\p j^* T, \p j^* L) 
\]
Now, we have  $\Hom_{\CC_F}(\p i^* T, \p i^! L) = 0$ because
$\p i^* T \in \TC_F$ and $\p i^! L \in \FC_F$,
and similarly $\Hom_{\CC_U}(\p j^* T, \p j^* L) = 0$ because
$\p j^* T \in \TC_U$ and $\p j^* L \in \FC_U$. Thus
$\Hom_{\CC}(T, L) = 0$.
\end{proof}

We are now ready to prove the Proposition. To check the second axiom for
torsion theories, the idea is the following: given an object $A$, of
$\CC$, we construct a filtration $0 \subset S \subset B \subset A$
where $S$ is in $\TC$, $A/B$ is in $\FC$, and $M := B/S$ is in 
$\p i_* \CC_F$. Then we use the torsion theory on $\CC_F$ to cut $M$
into a torsion part and a torsion-free part. Taking the inverse image
in $B$ of the torsion part of $M$, we get the torsion subobject of
$A$. Now let us give the details.

\begin{proof}[Proof of Proposition \ref{prop:tt recollement}]
The first axiom for torsion theories has been checked in Lemma
\ref{lemma:Hom(T,L) = 0}.

Secondly, given $A$ in $\CC$, we have to find $T$ in $\TC$
and $L$ in $\FC$ such that we have a short exact sequence
$0 \to T \to A \to L \to 0$.

Since $(\TC_U,\FC_U)$ is a torsion theory on $\CC_U$, we have a short
exact sequence
\begin{equation}
\label{eq:ses tt U}
0
\longto (\p j^* A)_\tors
\longto \p j^* A
\longto (\p j^* A)_\free
\longto 0
\end{equation}

By adjunction, we have morphisms
\[
\p j_! (\p j^* A)_\tors
\elem{f} A
\elem{g} \p j_* (\p j^* A)_\free
\]
and the morphisms of \eqref{eq:ses tt U} are $\p j^* f$ and $\p j^* g$.
Let $S$ and $Q$ denote the images of $f$ and $g$. We have canonical
factorizations
%\begin{gather*}
%\p j_! (\p j^* A)_\tors \stackrel{q_S}{\onto} S \stackrel{i_S}{\injto} A
%\stackrel{q_Q}{\onto} Q \stackrel{i_Q}{\injto} \p j_* (\p j^* A)_\free
%\end{gather*}
\[
\xymatrix@!=.8cm{
\p j_! (\p j^* A)_\tors \ar[rr]^f \ar[rr] \ar@{->>}[dr]_{q_S}
&& A \ar[rr]^g \ar@{->>}[dr]_{q_Q}
&& \p j_* (\p j^* A)_\free
\\
& S \ar@{^(->}[ur]_{i_S}
&& Q \ar@{^(->}[ur]_{i_Q}
}
\]

By Lemma \ref{lemma:direct images rec tt}, since $(\p j^* A)_\tors$
is in $\TC_U$, the object $\p j_! (\p j^* A)_\tors$ is in $\TC$, so by
Lemma \ref{lemma:stab rec tt}, its quotient $S$ is also in $\TC$.
Similarly, $\p j_* (\p j^* A)_\free$ is in $\FC$ so its subobject $Q$
is also in $\FC$. By Lemma \ref{lemma:Hom(T,L) = 0}, it follows that
$\Hom_\CC(S,Q) = 0$. Thus $q_Q  i_S = 0$, and $i_S$ factors through
the kernel $b:B\injto A$ of $q_Q : A \onto Q$ as
$i_S = b \iota$, for some monomorphism $\iota : S\injto B$, and we can
identify $S$ with a subobject of $B$.
Now let $M = B/S$, and let $\pi : B \onto M$ be the canonical quotient
morphism.

The morphism $\p j^* f$ is a monomorphism, hence
$\p j^* q_S$, which is an epimorphism since $\p j^*$ is exact, is
actually an isomorphism. Similarly, $\p j^* i_Q$ is an isomorphism.
Thus $\p j^* b$ is the kernel of $\p j^* g$, and $\p j^* \iota$ is an
isomorphism as well. Applying $\p j^*$ to the short exact sequence
\[
0 \longto S \longto B \longto M \longto 0
\]
gives an exact sequence
\[
0 \longto \p j^* S \longto \p j^* B \longto \p j^* M \longto 0
\]
where the first morphism is an isomorphism, hence
$\p j^* M = 0$, and $M$ is in $\p i_* \CC_F$.
We have a short exact sequence
\[
0 \longto (\p i^* M)_\tors \longto \p i^* M \longto (\p i^* M)_\free
\longto 0
\]
Applying the exact functor $\p i_*$, we get a short exact sequence
\[
0 \longto \p i_* (\p i^* M)_\tors \longto M \longto \p i_* (\p i^* M)_\free
\longto 0
\]
and, by Lemma \ref{lemma:direct images rec tt},
$\p i_* (\p i^* M)_\tors$ is in $\TC$ and
$\p i_* (\p i^* M)_\free$ is in $\FC$.

Let $T$ denote the inverse image $\pi^{-1}(\p i_* (\p i^* M)_\tors)$
in $B$ (recall that $\pi : B \to M = B/S$ is the quotient morphism),
and let $L = A/T$.

We have a filtration $0 \subset S \subset T \subset B \subset A$ of
$A$, and the following short exact sequences:
\[
0 \longto S \longto T \longto \p i_* (\p i^* M)_\tors \longto 0
\]
which shows that $T$ is in $\TC$ by Lemma \ref{lemma:stab rec tt}, and
\[
0 \longto \p i_* (\p i^* M)_\free \longto L \longto Q \longto 0
\]
which shows that $L$ is in $\FC$ (by the same lemma), and
\[
0 \longto T \longto A \longto L \longto 0
\]
which completes the proof.
\end{proof}

Using these torsion theories on $\CC$, $\CC_F$ and $\CC_U$, one can
define, as in Subsection \ref{subsec:tt ts}, new $t$-structures on
$\DC$, $\DC_F$ and $\DC_U$, denoted with the
superscript $p_+$. Then the $t$-structure for $p_+$ on $\DC$ is obtained by
recollement from the $t$-structures for $p_+$ on $\DC_F$ and $\DC_U$.

Moreover, we have six interesting functors from $\CC_U \cap \CC_U^+$ to $\DC$
\begin{alignat}{5}
\p j_!
&\quad = \quad& \p\t^F_{\leqslant -2}\ j_* 
&\quad = \quad& \p\t^F_{\geqslant 0}\ j_!
\\
\pp j_!
&\quad = \quad& \p\t^F_{\leqslant -2_+}\ j_*
&\quad = \quad& \p\t^F_{\geqslant 0_+}\ j_!
\\ 
\p j_{!*}
&\quad = \quad& \p\t^F_{\leqslant -1}\ j_*
&\quad = \quad& \p\t^F_{\geqslant 1}\ j_!
\\ 
\pp j_{!*}
&\quad = \quad& \p\t^F_{\leqslant -1_+}\ j_*
&\quad = \quad& \p\t^F_{\geqslant 1_+}\ j_!
\\ 
\p j_*
&\quad = \quad& \p\t^F_{\leqslant 0}\ j_*
&\quad = \quad& \p\t^F_{\geqslant 2}\ j_!
\\ 
\pp j_*
&\quad = \quad& \p\t^F_{\leqslant 0_+}\ j_*
&\quad = \quad& \p\t^F_{\geqslant 2_+}\ j_!
\end{alignat}

The first of these functors has image in $\CC$, the last one in $\CC^+$,
and the other four in $\CC \cap \CC^+$.

We have a chain of morphisms
\[
\p j_! \longto \pp j_! \longto \p j_{!*} \longto \pp j_{!*}
\longto \p j_* \longto \pp j_*
\]
and distinguished triangles:
\begin{alignat}{6}
\label{tri:rec tt first}
\p j_! &\quad\longto\quad& \pp j_! &\quad\longto\quad& \p i_* \p H^{-1}_\tors i^* j_* [1] &\rtordu
\\
\pp j_! &\quad\longto\quad& \p j_{!*} &\quad\longto\quad& \p i_* \p H^{-1}_\free i^* j_* [1]&\rtordu
\\
\p j_{!*} &\quad\longto\quad& \pp j_{!*} &\quad\longto\quad& \p i_* \p H^0_\tors i^* j_* &\rtordu
\\
\pp j_{!*} &\quad\longto\quad& \p j_* &\quad\longto\quad& \p i_* \p H^0_\free i^* j_* &\rtordu
\\
\label{tri:rec tt last}
\p j_* &\quad\longto\quad& \pp j_* &\quad\longto\quad& \p i_* \p H^1_\tors i^* j_* [-1] &\rtordu
\end{alignat}
summarized by:
{\small
\[
\begin{array}{|*{6}{c|}}
\hline
\cdots
&{\SS \p i_* \p H^{-1}_\tors i^* j_*}
&{\SS \p i_* \p H^{-1}_\free i^* j_*}
&{\SS \p i_* \p H^0_\tors i^* j_*}
&{\SS \p i_* \p H^0_\free i^* j_*}
&{\SS \p i_* \p H^1_\tors i^* j_*}
\\
\hline
\multicolumn{1}{|r|}{\SS \p j_!}
&\multicolumn{1}{r|}{\SS \pp j_!}
&\multicolumn{1}{r|}{\SS \p j_{!*}}
&\multicolumn{1}{r|}{\SS \pp j_{!*}}
&\multicolumn{1}{r|}{\SS \p j_*}
&\multicolumn{1}{r|}{\SS \pp j_*}
\end{array}
\]
}

\subsection{Perverse $t$-structures}
\label{subsec:perv}

Let us go back to the setting of \ref{subsec:context}. We want to
define the $t$-structure defining the $\EM$-perverse sheaves on $X$
for the middle perversity $p$ (and, in case $\EM = \OM$, also for the
perversity $p_+$), following \cite{BBD}. Let us start with the case $\EM = \FM$. We will
consider pairs $(\XG,\LG)$ satisfying the following conditions:
\begin{assumption}
\label{ass:strata}
\mbox{}
\begin{enumerate}[(i)]
\item\label{XG}
$\XG$ is a partition of $X$ into finitely many locally closed smooth
  pieces, called strata, and the closure of a stratum is a union of strata.

\item\label{LG}
$\LG$ consists in the following data: for each stratum $S$ in $\XG$, 
a finite set $\LG(S)$ of isomorphism classes of irreducible locally constant
sheaves of $\FM$-modules over $S$.

\item\label{cons} For each $S$ in $\XG$ and for each $\FC$ in $\LC(S)$, if $j$
denotes the inclusion of $S$ into $X$, then the $R^nj_* \FC$ are
$(\XG,\LG)$-constructible, with the definition below.
\end{enumerate}
\end{assumption}

A sheaf of $\FM$-modules is $(\XG,\LG)$-constructible if and only if
its restriction to each stratum $S$ in $\XG$ is locally constant and
a finite iterated extension of irreducible locally constant sheaves
whose isomorphism class is in $\LG(S)$. 
We denote by $D^b_{\XG,\LG}(X,\FM)$ the full subcategory of $D^b(X,\FM)$
consisting of the $(\XG,\LG)$-constructible complexes, that is, whose
cohomology sheaves are $(\XG,\LG)$-constructible.

We say that $(\XG',\LG')$ refines $(\XG,\LG)$ if each stratum $S$ in $\XG$
is a union of strata in $\XG'$, and all the $\FC$ in $\LG(S)$ are
$(\XG',\LG')$-constructible, that is, $(\XG'_{|S},\LG_{\left|\XG'_{|S}\right.})$-constructible.

The condition (\ref{cons})
ensures that for $U \stackrel{j}{\injto} V \subset X$ locally closed
and unions of strata, the functors $j_*$, $j_!$ (resp. $j^*$, $j^!$) send
$D^b_{\XG,\LG}(U,\FM)$ into $D^b_{\XG,\LG}(V,\FM)$ (resp.
$D^b_{\XG,\LG}(V,\FM)$ into $D^b_{\XG,\LG}(U,\FM)$).
It follows from the constructibility theorem for $j_*$ (SGA 4$\frac{1}{2}$)
that any pair $(\XG',\LG')$ satisfying (\ref{XG}) and (\ref{LG}) can be refined
into a pair $(\XG,\LG)$ satisfying (\ref{XG}), (\ref{LG}) and
(\ref{cons}) (see \cite[\S 2.2.10]{BBD}).

So let us fix a pair $(\XG,\LG)$ as above. Then we define the full subcategories
$\p D_{\XG,\LG}^{\leqslant 0}(X,\FM)$ and $\p D_{\XG,\LG}^{\geqslant 0}(X,\FM)$
of $D^b_{\XG,\LG}(X,\FM)$ by
\begin{gather*}
A\in \p D^{\leqslant 0}_{\XG,\LG}(X,\EM) \Iff
\forall S \in \XG,\ \forall n > - \dim S,\ 
\HC^n i_S^* A = 0
\\
A\in \p D^{\geqslant 0}_{\XG,\LG}(X,\EM) \Iff
\forall S \in \XG,\ \forall n < - \dim S,\ 
\HC^n i_S^! A = 0
\end{gather*}
for any $A$ in $D^b_{\XG,\LG}(X,\FM)$, where $i_S$ is the inclusion of the stratum $S$.

One can show by induction on the number of strata that
this defines a $t$-structure on $D^b_{\XG,\LG}(X,\FM)$, by repeated
applications of Theorem \ref{th:recollement}. On a stratum, we
consider the natural $t$-structure shifted by $\dim S$, and we glue
these $t$-structures successively.

The $t$-structure on $D^b_{\XG',\LG'}(X,\FM)$ for a finer pair
$(\XG,\LG)$ induces the same $t$-structure on $D^b_{\XG,\LG}(X,\FM)$,
so passing to the limit we obtain a $t$-structure on $D^b_c(X,\FM)$,
which is described by the conditions \eqref{def:p <= 0} and
\eqref{def:p >= 0} of Subsection \ref{subsec:context}.

Over $\OM / \varpi^n$, we proceed similarly. An object $K$ of
$D^b_c(X,\OM/\varpi^n)$ is $(\XG,\LG)$-constructible if all the
$\varpi^i \HC^j K / \varpi^{i + 1} \HC^j K $ are $(\XG,\LG)$-constructible
as $\FM$-sheaves.

Over $\OM$, since our field $k$ is finite or algebraically closed, we
can use Deligne's definition of $D^b_c(X,\OM)$ as the projective
$2$-limit of the triangulated categories $D^b_c(X,\OM/\varpi^n)$. The
assumption insures that it is triangulated. We have triangulated functors
$\OM/\varpi^n \otimes^\LM_\OM (-) : D^b_c(X,\OM) \to D^b_c(X,\OM/\varpi^n)$,
and in particular $\FM \otimes^\LM_\OM (-)$. We will often omit from
the notation $\otimes^\LM_\OM$ and simply write $\FM(-)$. The functor
$\HC^i : D^b_c(X,\OM) \to \Sh(X,\OM)$ is defined by sending an object
$K$ to the projective system of the $\HC^i(\OM/\varpi^n \otimes^\LM_\OM K)$.
We have exact sequences:
{\small
\begin{equation*}
0 \longto \OM/\varpi^n \otimes_\OM \HC^i(K)
\longto \HC^i(\OM/\varpi^n \otimes^\LM_\OM K)
\longto \Tor_1^\OM(\OM/\varpi^n,\HC^{i + 1}(K))
\longto 0
\end{equation*}
}

Let $D^b_{\XG,\LG}(X,\OM)$ be the full subcategory of $D^b_c(X,\FM)$
consisting of the objects $K$ such that for some (or any) $n$,
$\OM/\varpi^n \otimes^\LM_\OM K$ is in $D^b_{\XG,\LG}(X,\OM/\varpi^n)$, or
equivalently, such that the $\F\otimes_\OM\HC^i K$ are $(\XG,\LG)$-constructible. 
We define the $t$-structure for the perversity $p$ on
$D^b_{\XG,\LG}(X,\OM)$ as above. Its heart is the abelian category
$\p \MC_{\XG,\LG}(X,\OM)$. Since it is $\OM$-linear, it is endowed
with a natural torsion theory, and we can define another
$t$-structure as in Subsection \ref{subsec:tt ts}, and we will say that it
is associated to the perversity $p_+$. By Subsection \ref{subsec:tt recollement},
it can also be obtained by recollement.
Passing to the limit, we get two $t$-structures on $D^b_c(X,\OM)$, for
the perversities $p$ and $p_+$, which can be characterized by the conditions
\eqref{def:p <= 0}, \eqref{def:p >= 0}, \eqref{def:p+ <= 0}
and \eqref{def:p+ >= 0} of Subsection \ref{subsec:context}.

An object $A$ of $D^b_c(X,\OM)$ is in $\p D^{\leqslant 0}(X,\OM)$
(resp. $\pp D^{\geqslant 0}(X,\OM)$)
if and only if $\FM A$ is in $\p D^{\leqslant 0}(X,\FM)$
(resp. $\pp D^{\geqslant 0}(X,\FM)$).

If $A$ is an object in $\p\MC(X,\OM)$, then
$\FM A$ is in $\p\MC(X,\FM)$ if and only if $A$ is
torsion-free (that is, if and only if $A$ is also $p_+$-perverse).
Then we have $\FM A = \Coker \varpi.1_A$
(the cokernel being taken in $\p\MC(X,\OM)$).

Similarly, if $A$ is an object in $\pp\MC(X,\OM)$, then
$\FM A$ is in $\p\MC(X,\FM)$ if and only if $A$ is
divisible (that is, if and only if $A$ is also $p$-perverse).
Then we have $\FM A = \Ker \varpi.1_A [1]$
(the kernel being taken in $\pp\MC(X,\OM)$).

To pass from $\OM$ to $\KM$, we simply apply $\KM \otimes_\OM
(-)$. Thus $D^b_c(X,\KM)$ is the category with the same objects as
$D^b_c(X,\OM)$, and morphisms
\[
\Hom_{D^b_c(X,\KM)}(A,B) = \KM \otimes_\OM \Hom_{D^b_c(X,\OM)}(A,B)
\]
We write $D^b_c(X,\KM) = \KM\otimes_\OM D^b_c(X,\OM)$. We also have
$\Sh(X,\KM) = \KM\otimes_\OM \Sh(X,\OM)$. Then we define the full
subcategory $D^b_{\XG,\LG}(X,\KM)$ of $D^b_c(X,\KM)$ as the image of 
$D^b_{\XG,\LG}(X,\OM)$. The $t$-structures $p$ and $p_+$ on
$D^b_{\XG,\LG}(X,\OM)$ give rise to a single $t$-structure $p$ on
$D^b_{\XG,\LG}(X,\KM)$, because torsion objects are killed by $\KM
\otimes_\OM (-)$. This perverse $t$-structure can be defined by
recollement. Passing to the limit, we get the perverse $t$-structure
on $D^b_c(X,\KM)$ defined by \eqref{def:p <= 0} and \eqref{def:p >= 0}. We
have $\p \MC(X,\KM) = \KM \otimes_\OM \p\MC(X,\OM)$.

\subsection{Modular reduction and truncation functors}\label{subsec:F recollement}

Modular reduction does not commute with truncation functors.
We will now study the failure of commutativity between these functors.
Recall that, to simplify the notation, we write $\FM (-)$ for
$\FM\otimes^\LM_\OM(-)$.

\begin{proposition}
\label{prop:Ft}
For $A \in D^b_c(X,\OM)$ and $n \in \ZM$, we have distinguished triangles:
\begin{alignat}{6}
\label{tri:Fi iF}
\F \t_{\leqslant n}\,A &\quad\longto\quad&
\t_{\leqslant n}\,\F A &\quad\longto\quad&
\HC^{-1}(\F \HC^{n+1}_\tors A) [-n] \rightsquigarrow
\\
\label{tri:iF Fi+}
\t_{\leqslant n}\,\F A &\quad\longto\quad&
\F  \t_{\leqslant n_+} A &\quad\longto\quad&
\HC^0 ( \F \HC^{n+1}_\tors A) [-n - 1] \rightsquigarrow
\\
\label{tri:Fi+ Fi+1}
\F  \t_{\leqslant n_+} A &\quad\longto\quad&
\F  \t_{\leqslant n+1}\,A &\quad\longto\quad&
\F \HC^{n+1}_\free A [-n-1] \rightsquigarrow
\end{alignat}

In particular,
\begin{alignat}{3}
\label{isom:Fi iF Fi+}
\HC^{n+1}_\tors A = 0 &\quad \Imp \quad&
\F \t_{\leqslant n}\,A \isom \t_{\leqslant n}\,\F A
\isom \F \t_{\leqslant n_+} A
\\
\label{isom:Fi+ Fi+1}
\HC^{n+1}_\free A = 0 &\quad \Imp \quad&
\F  \t_{\leqslant n_+} A \isom \F  \t_{\leqslant n+1}\,A
\qquad
\end{alignat}
\end{proposition}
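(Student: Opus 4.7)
The plan is to derive all three triangles from applying $\F$ to \eqref{eq:tri tors} and \eqref{eq:tri free}, combined with the universal coefficient sequence
\[
0 \longto \F\otimes_\OM\HC^iB \longto \HC^i(\F B) \longto \Tor_1^\OM(\F,\HC^{i+1}B) \longto 0
\]
recalled just before this proposition, and with the fact that $\F$ sends a torsion sheaf $T$ to a complex concentrated in degrees $-1,0$ (with $\HC^{-1}\F T=\Tor_1^\OM(\F,T)$ and $\HC^0\F T=\F\otimes_\OM T$), while sending a torsion-free sheaf to a sheaf in degree $0$ (torsion-free sheaves over the DVR $\OM$ being stalkwise flat).

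Triangle \eqref{tri:Fi+ Fi+1} will follow at once by applying the triangulated functor $\F$ to \eqref{eq:tri free}, since $\F\HC^{n+1}_\free A$ is concentrated in degree $0$.

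For \eqref{tri:Fi iF}, the first step is to observe that $\F\t_{\leqslant n}A$ still lies in $\DC^{\leqslant n}$: the torsion possibly present in $\HC^n\t_{\leqslant n}A$ contributes only a $\HC^{-1}$ at degree $n-1$ and a $\HC^0$ at degree $n$. By the universal property of $\t_{\leqslant n}$, the canonical morphism $\F\t_{\leqslant n}A \to \F A$ then factors uniquely through a morphism $\alpha\colon \F\t_{\leqslant n}A \to \t_{\leqslant n}\F A$. I would next apply the octahedral axiom to the composition $\F\t_{\leqslant n}A \elem{\alpha} \t_{\leqslant n}\F A \to \F A$: the three cones are $\Cone(\alpha)$, $\t_{\geqslant n+1}\F A$ and (since $\F$ is triangulated) $\F\t_{\geqslant n+1}A$, and the octahedron produces a distinguished triangle
\[
\Cone(\alpha) \longto \F\t_{\geqslant n+1}A \longto \t_{\geqslant n+1}\F A \rightsquigarrow.
\]
A cohomology computation via the universal coefficient sequence will show that the last arrow is an isomorphism on $\HC^i$ for $i\geqslant n+1$, while at degree $n$ its target vanishes and its source is $\Tor_1^\OM(\F,\HC^{n+1}A)=\HC^{-1}\F\HC^{n+1}_\tors A$. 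Therefore $\Cone(\alpha)\simeq \HC^{-1}\F\HC^{n+1}_\tors A[-n]$, yielding \eqref{tri:Fi iF}.

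For \eqref{tri:iF Fi+}, I will compute the cohomology of $\F\t_{\leqslant n_+}A$ directly. Since $\t_{\leqslant n_+}A$ has cohomology $\HC^iA$ for $i\leqslant n$, $\HC^{n+1}_\tors A$ in degree $n+1$, and zero above, the universal coefficient sequence gives $\F\t_{\leqslant n_+}A\in \DC^{\leqslant n+1}$ with $\HC^{n+1}=\F\otimes\HC^{n+1}_\tors A=\HC^0\F\HC^{n+1}_\tors A$. Applying $\F$ to the triangle $\t_{\leqslant n_+}A\to A\to \t_{\geqslant(n+1)_+}A\rightsquigarrow$ and noting that $\HC^{n+1}\t_{\geqslant(n+1)_+}A=\HC^{n+1}_\free A$ is flat (so its $\Tor_1$ with $\F$ vanishes) yields $\F\t_{\geqslant(n+1)_+}A\in\DC^{\geqslant n+1}$; hence the canonical map $\F\t_{\leqslant n_+}A\to \F A$ induces an isomorphism $\t_{\leqslant n}\F\t_{\leqslant n_+}A\isom \t_{\leqslant n}\F A$, and the standard truncation triangle of $\F\t_{\leqslant n_+}A$ is then exactly \eqref{tri:iF Fi+}. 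The implications \eqref{isom:Fi iF Fi+} and \eqref{isom:Fi+ Fi+1} are automatic since the third term of the corresponding triangle vanishes under the stated hypothesis. The main obstacle I expect is the octahedral identification of $\Cone(\alpha)$ in \eqref{tri:Fi iF}: one must check that the map $\F\t_{\geqslant n+1}A\to \t_{\geqslant n+1}\F A$ produced by the octahedron matches, under the universal coefficient identifications, the expected comparison, which reduces to naturality of the short exact sequence.
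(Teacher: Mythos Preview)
Your argument is correct, and in fact the worry you flag at the end is unnecessary: once you know $\Cone(\alpha)\in\DC^{\leqslant n}$ (which is immediate since both $\F\t_{\leqslant n}A$ and $\t_{\leqslant n}\F A$ lie in $\DC^{\leqslant n}$), the long exact sequence of the triangle $\Cone(\alpha)\to\F\t_{\geqslant n+1}A\to\t_{\geqslant n+1}\F A$ forces the cohomology of $\Cone(\alpha)$ to vanish outside degree $n$ and to equal $\Tor_1^\OM(\F,\HC^{n+1}A)$ there, without any need to compare the map against the universal coefficient identifications.

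The paper organizes the proof differently. For \eqref{tri:Fi+ Fi+1} it does exactly what you do. For \eqref{tri:Fi iF} and \eqref{tri:iF Fi+}, instead of treating them separately, it applies the dual octahedral axiom (TR4$'$) to the pair of triangles obtained from $\F$ applied to \eqref{eq:tri tors} and from the two-term filtration of $\F\HC^{n+1}_\tors A$ by its $\HC^{-1}$ and $\HC^0$; this single octahedron produces an intermediate object $B$ together with both of the desired triangles, with $B$ in the middle position. A second octahedron (TR4), applied to the composite $\t_{\leqslant n}\F\t_{\leqslant n_+}A\to\F\t_{\leqslant n_+}A\to\F A$, then identifies $B\simeq\t_{\leqslant n}\F A$. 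Your route is arguably more hands-on: you extract \eqref{tri:iF Fi+} directly as the truncation triangle of $\F\t_{\leqslant n_+}A$ (no octahedron needed), and you build \eqref{tri:Fi iF} from a different octahedron, namely the one attached to the factorization $\F\t_{\leqslant n}A\to\t_{\leqslant n}\F A\to\F A$. The paper's approach has the advantage of producing both triangles from one diagram and of making their compatibility transparent; yours is a bit more elementary and avoids TR4$'$.
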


\begin{proof}
We have a distinguished triangle \eqref{eq:tri free}
\[
\t_{\leqslant n_+} A \to \t_{\leqslant n + 1}\,A \to \HC^{n + 1}_\free A[-n - 1] \rtordu
\]
in $D^b_c(X,\OM)$. Applying $\FM(-)$, we get the
triangle (\ref{tri:Fi+ Fi+1}).
If $\HC^{n+1}_\free A$, this reduces to the isomorphism \eqref{isom:Fi+ Fi+1}.

We also have a distinguished triangle \eqref{eq:tri tors}
\[
\t_{\leqslant n}\,A \to \t_{\leqslant n_+} A \to \HC^{n + 1}_\tors A[-n - 1] \rtordu
\]
in $D^b_c(X,\OM)$. Applying $\FM(-)$, we get a distinguished triangle in
$D^b_c(X,\FM)$
\begin{equation}
\F \t_{\leqslant n}\,A \to \F \t_{\leqslant n_+} A \to
\F \HC^{n + 1}_\tors A[-n - 1] \rtordu
\end{equation}
On the other hand, we have a distinguished triangle
\begin{equation}
\Tor_1^\OM(\FM, \HC^{n + 1}_\tors A)[-n]
\to \F \HC^{n + 1}_\tors A[-n - 1]
\to \FM\otimes_\OM\HC^{n + 1}_\tors A[-n - 1]
\end{equation}

By the dual octahedron axiom of triangulated categories (the TR 4'
axiom, see \cite{BBD}), we have an octahedron diagram
{\small
\begin{equation}
\label{octa:Fi iF Fi+}
\tag{$\Omega$}
\octa
{\F \t_{\leqslant n}\,A}
{B}
{\Tor_1^\OM(\FM, \HC^{n + 1}_\tors A)[-n]}
{\F \t_{\leqslant n_+} A}
{\F \HC^{n + 1}_\tors A[-n - 1]}
{\FM\otimes_\OM\HC^{n + 1}_\tors A[-n - 1]}
{@!=0cm}
%{@R=.3cm @C=.3cm}
\end{equation}
}
for some $B$ in $D^b_c(X,\OM)$.

The triangle
$(\F \t_{\leqslant n}\, A,\  B,\ \Tor_1^\OM(\FM, \HC^{n + 1}_\tors A)[-n])$
shows that
$B$ lies in $D^{\leqslant n}_c(X,\FM)$, and then
the triangle
$(B,\ \F \t_{\leqslant n_+} A,\ \FM\otimes_\OM\HC^{n + 1}_\tors A[-n - 1])$
shows that $B$ is (uniquely) isomorphic to
$\t_{\leqslant n}\, \F \t_{\leqslant n_+} A$.
Let us now show that
$\t_{\leqslant n}\, \F \t_{\leqslant n_+} A \simeq \t_{\leqslant n}\, \F A$.

By the TR 4 axiom \cite{BBD}, we have an octahedron diagram
\[
\octa
{\t_{\leqslant n}\ \F \t_{\leqslant n_+} A}
{\F \t_{\leqslant n_+} A}
{\t_{\geqslant n + 1}\ \F \t_{\leqslant n_+} A}
{\F A}
{C}
{\F \t_{\geqslant (n + 1)_+} A}
%{@!=.1cm}
{@R=.3cm @C=.3cm}
\]
for some $C$ in $D^b_c(X,\OM)$.

First, the triangle
$(\t_{\geqslant n + 1}\, \F \t_{\leqslant n_+} A,\ C,\ \F \t_{\geqslant (n + 1)_+} A)$
shows that $C$ lies in $D^{\geqslant n + 1}_c(X,\FM)$.
Secondly, the triangle
$(\t_{\leqslant n}\ \F \t_{\leqslant n_+} A,\ \F A,\ C)$
shows that
$B \simeq \t_{\leqslant n}\ \F \t_{\leqslant n_+} A \simeq \t_{\leqslant n}\ \F A$
and $C \simeq \t_{\geqslant n + 1}\ \F A$.

Hence the octahedron diagram \eqref{octa:Fi iF Fi+} contains the
triangles \eqref{tri:Fi iF} and \eqref{tri:iF Fi+}.
If $\HC^{n + 1}_\tors A = 0$, the diagram reduces to the isomorphisms
\eqref{isom:Fi iF Fi+}.
\end{proof}

We can summarize the Proposition by the following diagram:
{\small
\[
\begin{array}{*{6}{|r}|}
\hline
\multicolumn{3}{|c|}{\SS \HC^n \FM}
&\multicolumn{3}{c|}{\SS \HC^{n+1} \FM}
\\
\hline
{\SS \HC^0 \FM \HC^n_\tors}
&{\SS \FM \HC^n_\free}
&{\SS \HC^{-1} \FM \HC^{n+1}_\tors}
&{\SS \HC^0 \FM \HC^{n+1}_\tors}
&{\SS \FM \HC^{n+1}_\free}
&{\SS \HC^{-1} \FM \HC^{n+2}_\tors}
\\
\hline
{\SS \cdots}
&{\SS \FM \t_{\leq n}}
&{\SS \t_{\leq n} \FM}
&{\SS \FM \t_{\leq n_+}}
&{\SS \FM \t_{\leq n + 1}}
&{\SS \cdots}
\end{array}
\]
}
We have the same result if we replace $\t_{\leqslant n}$ by $\p
\t_{\leqslant n}$, and $\HC^n$ by $\p \HC^n$.

\subsection{Modular reduction and recollement}

Let us fix an open subvariety $j : U \to X$, with closed complement
$i : F\to X$. We want to see how the modular reduction behaves with
respect to this recollement situation.

For $A$ in $\p\MC(U,\OM) \cap \pp\MC(U,\OM)$, we have nine interesting
extensions of $\FM A$, out of which seven are automatically
perverse. These correspond to nine ways to truncate
$\FM j_* A = j_* \FM A$, three for each degree between $-2$
and $0$. Indeed, each degree is ``made of'' three parts:
the $\p\HC^0\,\FM(-)$ of the torsion part of the cohomology of $A$ of
the same degree, the reduction of the torsion-free part of the
cohomology of $A$ of the same degree, and the $\p\HC^{-1}\,\FM(-)$ of
the torsion part of the next degree (like a $\Tor_1$).

There is a variant of Proposition \ref{prop:Ft} for the functors
$\t^F_{\leqslant i}$ instead of $\t_{\leqslant i}$. We get the following diagram:
{\tiny
\[
\begin{array}{*{6}{|r}|}
\hline
\multicolumn{3}{|c|}{\SS i_* \HC^n i^* \FM}
&\multicolumn{3}{c|}{\SS i_* \HC^{n+1} i^* \FM}
\\
\hline
{\SS \HC^0 \FM i_* \HC^n_\tors i^*}
&{\SS \FM i_* \HC^n_\free i^*}
&{\SS \HC^{-1} \FM i_* \HC^{n+1}_\tors i^*}
&{\SS \HC^0 \FM i_* \HC^{n+1}_\tors i^*}
&{\SS \FM i_* \HC^{n+1}_\free i^*}
&{\SS \HC^{-1} \FM i_* \HC^{n+2}_\tors i^*}
\\
\hline
{\SS \cdots}
&{\SS \FM \t_{\leq i}}
&{\SS \t_{\leq i} \FM}
&{\SS \FM \t_{\leq i_+}}
&{\SS \FM \t_{\leq i + 1}}
&{\SS \cdots}
\end{array}
\]
}
The same remark applies if we use
$\p\t^F_{\leqslant n}$ instead of $\t^F_{\leqslant n}$.
Using Proposition \ref{prop:characterization},
we obtain a chain of morphisms:
\[
\FM \, \p j_!
\to \p j_!\, \FM
\to \FM \, \pp j_!
\to \FM \, \p j_{!*}
\to \p j_{!*}\, \FM
\to \FM \, \pp j_{!*}
\to \FM \, \p j_*
\to \p j_*\, \FM
\to \FM \, \pp j_*
\]
and distinguished triangles:
\begin{alignat}{6}
\label{tri:mod rec first}
\qquad
\FM \, \p j_! 
&\quad\longto\quad& \p j_!\, \FM
&\quad\longto\quad& \p\HC^{-1}\, \FM\, \p i_* \p\HC^{-1}_\tors\, i^* j_* [2]
\rightsquigarrow
\\ \T\qquad
\p j_!\, \FM
&\quad\longto\quad& \FM \, \pp j_!
&\quad\longto\quad& \p\HC^0\, \FM\, \p i_* \p\HC^{-1}_\tors\, i^* j_* [1]
\rightsquigarrow
\\ \T\qquad
\FM \, \pp j_!
&\quad\longto\quad& \FM \, \p j_{!*}
&\quad\longto\quad& \FM\, \p i_* \p\HC^{-1}_\free\, i^* j_* [1]
\rightsquigarrow
\\ \T\qquad
\FM \, \p j_{!*}
&\quad\longto\quad& \p j_{!*}\, \FM
&\quad\longto\quad& \p\HC^{-1}\, \FM\, \p i_* \p\HC^0_\tors\, i^* j_* [1]
\rightsquigarrow
\\ \T\qquad
\p j_{!*}\, \FM
&\quad\longto\quad& \FM \, \pp j_{!*}
&\quad\longto\quad& \p\HC^0\, \FM\, \p i_* \p\HC^0_\tors\, i^* j_*
\rightsquigarrow
\\ \T\qquad
\FM \, \pp j_{!*}
&\quad\longto\quad& \FM \, \p j_*
&\quad\longto\quad& \FM\, \p i_* \p\HC^0_\free\, i^* j_*
\rightsquigarrow
\\ \T\qquad
\FM \, \p j_*
&\quad\longto\quad& \p j_*\, \FM
&\quad\longto\quad& \p\HC^{-1}\, \FM\, \p i_* \p\HC^1_\tors\, i^* j_*
\rightsquigarrow
\\ \T\qquad
\label{tri:mod rec last}
\p j_*\, \FM  
&\quad\longto\quad& \FM \, \pp j_*
&\quad\longto\quad& \p\HC^0\, \FM\, \p i_* \p\HC^1_\tors\, i^* j_* [-1]
\rightsquigarrow
\end{alignat}

\medskip

In particular, for $A$ in $\p\MC(U,\OM) \cap \pp\MC(U,\OM)$, we have:
\begin{eqnarray}
\qquad
\p\HC^{-1}_\tors\, i^* j_* A = 0
\quad &\Imp& \quad 
\FM \, \p j_!\, A \elem{\sim} \p j_!\, \FM\, A \elem{\sim} \FM \, \pp j_!\, A
\\
\qquad
\p\HC^{-1}_\free\, i^* j_* A = 0
\quad &\Imp& \quad\qquad 
\FM \, \pp j_!\, A \elem{\sim} \FM \, \p j_{!*}\, A
\\
\qquad
\p\HC^0_\tors\, i^* j_* A = 0
\quad &\Imp& \quad 
\FM \, \p j_{!*}\, A \elem{\sim} \p j_{!*}\, \FM\, A \elem{\sim} \FM \, \pp j_{!*}\, A
\\
\qquad
\p\HC^0_\free\, i^* j_* A = 0
\quad &\Imp& \quad\qquad 
\FM \, \pp j_{!*}\, A \elem{\sim} \FM \, \p j_*\, A
\\
\qquad
\p\HC^1_\tors\, i^* j_* A = 0
\quad &\Imp& \quad 
\FM \, \p j_*\, A \elem{\sim} \p j_*\, \FM\, A \elem{\sim} \FM \, \pp j_*\, A
\end{eqnarray}

\subsection{Decomposition numbers}\label{subsec:decnum}

Let $X$ be endowed with a pair $(\XG,\LG)$ satisfying the conditions
(\ref{XG}), (\ref{LG}) and (\ref{cons}) of Section \ref{subsec:perv}.
Let $\PG$ be the set of pairs $(\OC,\LC)$ where $\OC \in \XG$ and $\LC \in \LG(\OC)$.
Let $K_0^{\XG,\LG}(X,\FM)$ be the Grothendieck group of the triangulated category
$D^b_{\XG,\LG}(X,\FM)$.

For $\OC \in \XG$, let $j_\OC : \OC \to X$ denote the inclusion.
For $(\OC,\LC) \in \PG$, let us denote by
\begin{eqnarray}
\0 \JC_!(\OC, \LC) &=& \0 {j_\OC}_!\ (\LC[\dim \OC])
\end{eqnarray}
the extension by zero of the local system $\LC$, shifted by $\dim \OC$.
We also introduce the following notation for the three perverse extensions.
\begin{eqnarray}
\p \JC_!(\OC, \LC) &=& \p {j_\OC}_!\ (\LC[\dim \OC])\\
\p \JC_{!*}(\OC, \LC) &=& \p {j_\OC}_{!*} (\LC[\dim \OC])\\
\p \JC_*(\OC, \LC) &=& \p {j_\OC}_*\ (\LC[\dim \OC])
\end{eqnarray}

We have
\begin{equation}
K_0^{\XG,\LG}(X,\FM) 
\simeq K_0(\Sh_{\XG,\LG}(X,\FM)) 
\simeq K_0(\p\MC_{\XG,\LG}(X,\FM))
\end{equation}
If $K \in D^b_{\XG,\LG}(X,\FM)$, then we have
\[
[K] = \sum_{i \in \ZM} (-1)^i [\HC^i(K)] = \sum_{j \in \ZM} (-1)^j [\p\HC^j(K)]
\]
in $K_0^{\XG,\LG}(X,\FM)$.

This Grothendieck group is free over $\ZM$, and admits the following bases
\begin{eqnarray*}
\BC_0 &=& (\0\JC_!(\OC,\LC))_{(\OC,\LC)\in\PG}\\
\BC_! &=& (\p\JC_!(\OC,\LC))_{(\OC,\LC)\in\PG}\\
\BC_{!*} &=& (\p\JC_{!*}(\OC,\LC))_{(\OC,\LC)\in\PG}\\
\BC_* &=& (\p\JC_*(\OC,\LC))_{(\OC,\LC)\in\PG}
\end{eqnarray*}

For $C \in K_0^{\XG,\LG}(X,\FM)$, let us define the integers
$\chi_{(\OC,\LC)}(C)$, for $(\OC,\LC)\in\PG$, by
the relations
\begin{eqnarray*}
C &=& \sum_{(\OC,\LC)\in\PG} \chi_{(\OC,\LC)}(C)\ [\0\JC_!(\OC,\LC)]
\end{eqnarray*}

For $? \in \{!,!*,*\}$, the complex $\p\JC_?(\OC,\LC)$ extends
the shifted local system $\LC[\dim \OC]$, and is supported on $\ov\OC$. This implies
\begin{equation}
\chi_{(\OC',\LC')}(\p\JC_?(\OC,\LC)) = 0
\text{ unless } \ov\OC' \subsetneq \ov\OC \text{ or } (\OC',\LC') = (\OC,\LC)
\end{equation}
and
\begin{equation}
\chi_{(\OC,\LC)}(\p\JC_?(\OC,\LC)) = 1
\end{equation}

In other words, the three bases $\BC_!$, $\BC_{!*}$ and $\BC_*$
are unitriangular with respect to the basis $\BC_0$.
This implies that they are also unitriangular with respect to each other.
In fact, we already knew it by Proposition \ref{prop:top socle},
since $\p\JC_!(\OC,\LC)$ (resp. $\p\JC_*(\OC,\LC)$)
has a top (resp. socle) isomorphic to $\p\JC_{!*}(\OC,\LC)$,
and the radical (resp. the quotient by the socle) is supported on $\ov\OC\setminus\OC$.
In particular, for $?\in\{!,*\}$, we have
\begin{equation}
[\p\JC_?(\OC,\LC) : \p\JC_{!*}(\OC',\LC')] = 0
\text{ unless } \ov\OC' \subsetneq \ov\OC \text{ or } (\OC',\LC') = (\OC,\LC)
\end{equation}
and
\begin{equation}
[\p\JC_?(\OC,\LC) : \p\JC_{!*}(\OC,\LC)] = 1
\end{equation}

Let $K_0^{\XG,\LG}(X,\KM)$ be the Grothendieck group of the triangulated category
$D^b_{\XG,\LG}(X,\KM)$. As for the case $\EM = \FM$, it can be identified with
the Grothendieck groups of 
$\Sh_{\XG,\LG}(X,\KM)$ and $\p\MC_{\XG,\LG}(X,\KM)$.

Now, let $K$ be an object of $D^b_{\XG,\LC}(X,\KM)$.  If $K_\OM$ is an
object of $D^b_{\XG,\LC}(X,\OM)$ such that $\KM \otimes_\OM K_\OM
\simeq K$, we can consider $[\FM K_\OM]$ in
$K_0^{\XG,\LC}(X,\FM)$. This class does not depend on the choice of
$K_\OM$ (note that the modular reduction of a torsion object has a
zero class in the Grothendieck group: if we assume, for simplicity,
that we have only finite monodromy, then by dévissage we can reduce to
the analogue result for finite groups). In fact, it depends only on
the class $[K]$ of $K$ in $K_0^{\XG,\LC}(X,\KM)$. So we have a
well-defined morphism
\begin{equation}
d : K_0^{\XG,\LC}(X,\KM) \longto K_0^{\XG,\LC}(X,\FM)
\end{equation}

For $(\OC,\LC)
\in \PG$, we can consider the decomposition number $[\FM K_\OM :
\p\JC_{!*}(\OC,\LC)]$, where $K_\OM$ is any object of
$D^b_{\XG,\LC}(X,\OM)$ such that $\KM K_\OM \simeq K$.

\subsection{Equivariance}
\label{subsec:orbits}

We now introduce $G$-equivariant perverse sheaves in the sense of
\cite[\S 0]{ICC}, \cite[\S 4.2]{LET}.

Let $G$ be a \emph{connected} algebraic group acting on a variety $X$.
Let $\rho : G \times X \to X$ be the morphism defining the action, and let
$p : G \times X \to X$ be the second projection. A sheaf
$F$ on $X$ is $G$-\emph{equivariant} if there is an isomorphism
$\a : p^* F \isom \rho^* F$. In that case, we can choose $\a$ in a unique way
such that the induced isomorphism $i^*(\a) : F \to F$ is the identity,
where $i : X \to G \times X$ is defined by $i(x) = (1_G, x)$.

If $f : X \to Y$ is a $G$-equivariant morphism, the functors $\0 f^*$,
$\0 f_*$ and $\0 f_!$ take $G$-equivariant sheaves to $G$-equivariant sheaves.

Let $\Sh_G(X,\EM)$ be the category whose objects are the $G$-equivariant $\EM$-sheaves
on $X$, and such that the morphisms between two objects $F_1$ and $F_2$
are the morphisms $\phi$ in $\Sh(X,\EM)$ such that the following diagram commutes
\[
\xymatrix{
p^* F_1 \ar[r]^{p^*\phi} \ar[d]_{\a_1} &
p^* F_2 \ar[d]^{\a_2}\\
\rho^* F_1 \ar[r]_{\rho^*\phi} &
\rho^* F_2
}
\]
where $\a_j$ is the unique isomorphism such that $i^*(\a_j)$ is the
identity for $j = 1,2$.
Then it turns out that $\Sh_G(X,\EM)$ is actually a full subcategory of
$\Sh(X,\EM)$.

For a general complex in $D^b_c(X,\EM)$, the notion of $G$-equivariance is more
delicate. However, for a perverse sheaf we can take the same definition as above,
and again the isomorphism $\a$ can be normalized with the same condition.
If $f$ is a $G$-equivariant morphism, then the functors $\p\HC^j f^*$,
$\p\HC^j f^!$, $\p\HC^j f_*$ and $\p\HC^j f^!$ take $G$-equivariant perverse
sheaves to $G$-equivariant perverse sheaves.

We define in the same way the category $\p\MC_G(X,\EM)$ of
$G$-equivariant perverse $\EM$-sheaves,
and again it is a full subcategory of $\p\MC(X,\EM)$. Moreover, it is stable
by subquotients. The simple objects in $\p\MC_G(X,\EM)$ are the intermediate
extensions of irreducible $G$-equivariant $\EM$-local systems on $G$-stable locally closed
smooth irreducible subvarieties of $X$.

Suppose $\EM$ is a field.
If $\OC$ is a homogeneous space for $G$, let $x$ be a point in $\OC$,
and let $A_G(x) = C_G(x)/C_G^0(x)$. Then the set of isomorphism
classes of irreducible $G$-equivariant $\EM$-local systems is in
bijection with the set $\Irr \EM A_G(x)$ of isomorphism classes of
irreducible representations of the group algebra $\EM A_G(x)$.

Suppose $X$ is a $G$-variety with finitely many orbits.  Then we can
take the stratification $\XG$ of $X$ by its $G$-orbits.  The orbits
are indeed locally closed by \cite[Lemma 2.3.1]{SPR}, and they are
smooth. For each $G$-orbit $\OC$ in $X$, let $x_\OC$ be a closed point
in $\OC$.  For $\LG(\OC)$ we take all the irreducible
$G$-equivariant $\FM$-local systems, so that we can identify
$\LG(\OC)$ with $\Irr \FM A_G(x_\OC)$.

Suppose $\EM$ is a field. Let $K_0^G(X,\EM)$ be the Grothendieck group
of the triangulated category $D^b_{\XG,\LG}(X,\EM)$. Then we have
\begin{equation}
K_0^G(X,\EM) = K_0(\p\MC_G(X,\EM)) = K_0(\Sh_G(X,\EM)) \simeq 
\bigoplus_{\OC} K_0(\Irr \EM A_G(x_\OC))
\end{equation}
If $K \in D^b_{\XG,\LG}(X,\EM)$, then we have
\[
[K] = \sum_{i \in \ZM} (-1)^i [\HC^i(K)] = \sum_{j \in \ZM} (-1)^j [\p\HC^j(K)]
\]
in $K_0^G(X,\EM)$.

Let $\PG_\EM$ be the set of pairs $(\OC,\LC)$ with $\OC \in \XG$ and
$\LC$ an irreducible $G$-equivariant $\EM$-local system on $\OC$
(corresponding to an irreducible representation $L$ of $\EM
A_G(x_\OC)$). Then we have bases
$\BC_0^\EM = (\0 j_!(\OC,\LC))_{(\OC,\LC)\in\PG_\EM}$,
$\BC_!^\EM = (\p j_!(\OC,\LC))_{(\OC,\LC)\in\PG_\EM}$,
$\BC_{!*}^\EM = (\p j_{!*}(\OC,\LC))_{(\OC,\LC)\in\PG_\EM}$,
$\BC_*^\EM = (\p j_*(\OC,\LC))_{(\OC,\LC)\in\PG_\EM}$.
Note that, if $\ell$ does not divide the $|A_G(x_\OC)|$, then we can
identify $\PG_\KM$ with $\PG_\FM$.

The transition matrices from $\BC_0^\EM$ to $\BC_?^\EM$ (for $?\in\{!,!*,*\}$)
are unitriangular, and also the transition matrices from
$\BC_{!*}^\EM$ to $\BC_?^\EM$ (for $?\in\{!,*\}$).

As in the last section, we have a morphism
\[
d : K_0^G(X,\KM) \longto K_0^G(X,\FM)
\]

The matrix of $d$ with respect to the bases $\BC_0^\EM$ is just
a product of blocks indexed by the orbits $\OC$, the block
corresponding to $\OC$ being the decomposition matrix of the finite
group $A_G(x_\OC)$. If $\ell$ does not divide the $|A_G(x_\OC)|$, this
is just the identity matrix.

We are interested in the matrix of $d$ in the bases 
$\BC_{!*}^\EM$. That is, we want to study the decomposition numbers
\[
d^X_{(\OC,\LC),(\OC',\LC')}
= [\FM\JC_{!*}(\OC,\LC_\OM) : \JC_{!*}(\OC',\LC')]
\]
for $(\OC,\LC) \in \PG_\KM$ and $(\OC',\LC') \in \PG_\FM$,
where $\LC_\OM$ is an integral form for $\LC$.
Recall that, if $\ell$ does not divide the $|A_G(x)|$, then we can
identify $\PG_\KM$ with $\PG_\FM$.

%In \cite{modspringer} (for the moment, see \cite{these}),
%we will see that, when $X$ is the nilpotent
%variety $\NC$, part of these numbers can be interpreted as
%decomposition numbers for the Weyl group, and we expect that the whole
%decomposition matrix coincides with the decomposition matrix for the
%Schur algebra. In types other than type $A$, we would have to make
%clear which Schur algebra should show up, since several of them appear
%naturally. Moreover, when $\ell$ divides some $|A_G(x)|$, we cannot
%expect to have a correspondence with a quasi-hereditary algebra, so
%we should observe new phenomena for these primes.

\section{Some techniques}
\label{sec:techniques}

By the results in Subsection \ref{subsec:orbits}, to compute
decomposition numbers in a $G$-equivariant setting, it is enough to
compute the stalks of the intersection cohomology complexes over $\KM$
and $\FM$, with the actions of the groups $A_G(x)$
(then we just have to solve a triangular linear system). In the
applications, these are usually known over $\KM$ but not over $\FM$.
It is harder to compute over $\FM$: for example, one cannot use
arguments involving counting points, or the Decomposition Theorem.
We are going to see some methods that can be used in the modular case.
Some of them will be illustrated in the next sections. The results
about $\EM$-smoothness will be illustrated in \cite{modspringer} (see
\cite{these}), in relation with the special pieces of the nilpotent cone.

\subsection{Semi-small morphisms}
\label{subsec:semismall}

The classical results about semi-small and small projective morphisms
still apply in the modular case. Nevertheless, unless we have a small resolution,
they are less useful to
determine the stalks of the intersection cohomology complexes, because
the Decomposition Theorem \cite{BBD} does not hold in this case.

\begin{definition}
A morphism $\pi : \Xti \to X$ is \emph{semi-small} is there is a stratification
$\XG$ of $X$ such that the for all strata $S$ in $\XG$, and for all closed
points $s$ in $S$, we have $\dim \pi^{-1}(s) \leqslant \frac{1}{2}\codim_X(S)$.
If moreover these inequalities are strict for all strata of positive codimension,
we say that $\pi$ is \emph{small}.
\end{definition}

Recall that $\Loc(S,\EM)$ is the full subcategory of $\Sh(X,\EM)$
consisting of the $\EM$-local systems. It is the heart of the
$t$-category $D^b_{\Loc}(S,\EM)$ which is the full subcategory of
$D^b_c(S,\EM)$ of objects $A$ such that all the $\HC^i A$ are local
systems, with the $t$-structure induced by the natural $t$-structure
on $D^b_c(S,\EM)$. For $\EM = \OM$, according to the definition given
after Proposition \ref{prop:tt ts}, we have an abelian category
$\Loc^+(S,\OM)$, which is the full subcategory of $D^b_c(S,\OM)$
consisting of the objects $A$ such that $\HC^0 A$ is a torsion-free
$\OM$-local system, and $\HC^1 A$ is a torsion $\OM$-local system.

\begin{proposition}\label{prop:small}
Let $\pi : \Xti \to X$ be a surjective, proper and separable morphism,
with $\Xti$ smooth irreducible of dimension $d$.
Let $\LC$ be in $\Loc(\Xti,\EM)$. Let us consider the complex $K = \pi_!\ \LC[d]$.
\begin{enumerate}[(i)]
\item If $\pi$ is semi-small, then $\dim X = d$ and $K$ is $p$-perverse.
\item If $\pi$ is small, then $K = \p j_{!*}\p j^* K$
for any inclusion $j : U \to X$ of a smooth open dense subvariety over which
$\pi$ is étale.
\end{enumerate}

In the case $\EM = \OM$, we can take $\LC$ in $\Loc^+(X,\OM)$ and
replace $p$ by $p_+$.
\end{proposition}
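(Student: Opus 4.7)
The plan is to verify the support and cosupport conditions defining the perverse $t$-structure, combining proper base change with the fiber-dimension bounds supplied by semi-smallness, and using Verdier duality for the cosupport half.

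First, for $\dim X = d$ in (i): $\pi$ surjective gives $\dim X \leqslant d$, and applying semi-smallness at the open dense stratum, where the generic fiber must be zero dimensional, gives the reverse inequality. Next, refine a semi-smallness stratification of $X$ so that $K = \pi_!\,\LC[d] = \pi_*\,\LC[d]$ becomes $(\XG,\LG)$-constructible, and fix a stratum $S$ of dimension $\dim S$. Constructibility reduces the support condition to the case of a closed point $x \in S$, and proper base change gives
\[
i_x^* K \;\simeq\; R\Gamma\bigl(\pi^{-1}(x),\ \LC|_{\pi^{-1}(x)}\bigr)[d].
\]
When $\EM$ is a field and $\LC$ is an ordinary local system, this is concentrated in degrees $[-d,\ 2\dim\pi^{-1}(x)-d] \subseteq [-d,\,-\dim S]$, proving $K \in \p D^{\leqslant 0}$. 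When $\EM = \OM$ and $\LC \in \Loc^+(\Xti,\OM)$, the distinguished triangle $\HC^0\LC \to \LC \to \HC^1\LC[-1] \to$ shows the fiberwise cohomology extends by at most one degree, with the extra top degree contributed only by the torsion sheaf $\HC^1\LC$; this yields $K \in \pp D^{\leqslant 0}$.

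For the cosupport half I use Verdier duality. The key fact is that on a smooth scheme $\Xti$ of dimension $d$, the functor $\RHOM(-,\OM)$ swaps the classes $\Loc(\Xti,\OM)$ and $\Loc^+(\Xti,\OM)$, because fiberwise the torsion-free part of a local system contributes $\Hom(-,\OM)$ in degree $0$ while the torsion part contributes $\Ext^1(-,\OM)$ in degree $1$. Thus $\DC_{\Xti}(\LC[d]) = \RHOM(\LC,\OM)[d]$ lies in the opposite class from $\LC[d]$; since $\pi$ is proper, $\DC_X K \simeq \pi_!\,\DC_{\Xti}(\LC[d])$, and the previous support computation applied in this dual situation gives $\DC_X K$ in $\pp D^{\leqslant 0}$ (resp.\ $\p D^{\leqslant 0}$) when $\LC \in \Loc$ (resp.\ $\Loc^+$). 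Since $\DC_X$ exchanges the $p$ and $p_+$ $t$-structures and is an involution, this is the required cosupport condition for $K$, completing (i). Over a field the two perversities coincide and everything reduces to the classical statement.

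For (ii), smallness strengthens the fiber-dimension inequality to a strict one on every stratum $S$ contained in the closed complement $F = X \setminus U$. Carrying this through the support computation above gives $\HC^n i_x^* K = 0$ for all $n \geqslant -\dim S$ on $F$, and dually $\HC^n i_x^! K = 0$ for all $n \leqslant -\dim S$ on $F$. These are precisely the strict support and cosupport conditions characterizing $\p j_{!*}\,\p j^* K$ in the recollement for $(U,F)$, by the corollary to Proposition \ref{prop:characterization}, so $K \simeq \p j_{!*}\,\p j^* K$. The main obstacle I expect is the torsion bookkeeping over $\OM$: showing cleanly that $\RHOM(-,\OM)$ really interchanges $\Loc$ and $\Loc^+$ on a smooth $\Xti$, and that the extra degree of fiberwise cohomology appearing when $\LC \in \Loc^+$ is purely torsion, so that one obtains the sharp $p_+$-support condition rather than mere membership in $\p D^{\leqslant 1}$.
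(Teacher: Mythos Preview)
The paper states this proposition without proof, treating it as the classical result on semi-small and small morphisms adapted to $\OM$-coefficients; your argument is precisely the standard one (proper base change plus the cohomological dimension bound for the support condition, Verdier duality together with smoothness of $\Xti$ for the cosupport condition), and the torsion bookkeeping you outline for the $p_+$ case over $\OM$ is correct. In particular, the key observation that $\RHOM(-,\OM)[d]$ exchanges $\Loc(\Xti,\OM)[d]$ and $\Loc^+(\Xti,\OM)[d]$ on a smooth $\Xti$ of dimension $d$ --- because the torsion part of a local system contributes only an $\Ext^1$ sitting one degree higher --- is exactly what makes the duality swap $p$ and $p_+$ cleanly, and your handling of the extra (torsion) degree in the fiber cohomology when $\LC\in\Loc^+$ is the right way to obtain the sharp $p_+$-support condition rather than merely $\p D^{\leqslant 1}$.
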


In the case $\EM = \KM$, the Decomposition Theorem \cite{BBD} says that
$K$ is the direct sum of its shifted perverse cohomology sheaves
and that each $\p H^i K$ is a semi-simple perverse sheaf.
If $\pi$ is semi-small, then only $\p H^0 K$ can be non-zero.
So, in the characteristic zero case, if $\pi$ is semi-small, the
intersection cohomology complex will be a direct summand
of the direct image of the constant perverse sheaf, the other simple
summands having strictly smaller support. These simple summands
correspond to the relevant pairs \cite{BM,BM2}. If $\pi$ is small,
then the only relevant stratum is the open stratum.

In the favorable case where we have a small resolution, to compute
the intersection cohomology stalks over any $\EM$, we are reduced to
compute the stalks of the direct image of the constant sheaf, that is,
the cohomology with $\EM$ coefficients of the fibers.

However, in the case of a semi-small resolution, the situation is less
favorable in characteristic $\ell$
than in characteristic zero. We can only say that the
intersection cohomology complex of $X$ is a subquotient of $K$. For
example, it can have non-zero stalks in odd degree, even if $K$ has
non-zero stalks only in even degree.

Now let us say what can happen when $\pi$ is a semi-small
morphism which is not a resolution. 
Since it is assumed to be
separable, there is an smooth open dense subvariety $j:X_0\injto X$ over which the
pullback $\pi_0 : \tilde X_0 \to X_0$ is finite étale. We can find a Galois
finite étale covering $Y$ of $X_0$, with Galois group $G$,
such that $\pi_0 : \tilde X_0 \to X$ is the subcovering corresponding
to a subgroup $H$ of $G$. Then the direct image under $\pi_0$ of the constant
perverse sheaf on $\tilde X_0$, which is just $\p j^* K$, is the local system corresponding to
the permutation representation $\EM [G/H]$ of $\EM G$. If $\ell$ does not divide
the index $|G:H|$, then the trivial module $\EM$ is a direct summand
of $\EM [G/H]$, and $\p j_{!*} \un\EM$ is a direct summand of
$\p j_{!*} \p j^* K$. Otherwise, $\EM$ is both a submodule and a
quotient of $\EM [G/H]$, so $\p j_{!*} \p j^* K$ will have
$\p j_{!*} \un\EM$ both as a suboject and as a quotient, but, besides
the other composition factors coming from $X_0$, there can be new
composition factors coming from the closed complement $F$ (thus
illustrating the non-exactness of $\p j_{!*}$).
If $\pi$ is small, then we have $K = \p j_{!*} \p j^* K$, but otherwise
$K$ can have composition factors coming from $F$ as subobjects and as
quotients, and $K = \p j_{!*} \p j^* K$ is just a subquotient of $K$.

\subsection{$\EM$-smoothness}
\label{subsec:E smooth}

We say that $X$ is $\EM$-smooth if $\ic_\EM(X)$ is reduced to
$\un \EM_X$. When $\EM = \OM$, we require this condition for both
perversities, $p$ and $p_+$. This property ensures that $X$ satisfies
Poincaré duality with $\EM$-coefficients. The notion of rational
smoothness was introduced by Deligne in \cite{DELIGNE}.

A smooth variety is $\EM$-smooth in all cases. If $X$ is $\KM$-smooth,
then it is $\FM_\ell$-smooth for all but finitely many $\ell$. If $X$
is not $\KM$-smooth, then it is $\FM_\ell$-smooth for no $\ell$.
On the other hand, $X$
is $\OM$-smooth if and only if it is $\FM_\ell$-smooth for all $\ell$. The next
proposition provides examples of varieties that are $\FM_\ell$-smooth
for some but not all primes $\ell$, such as the simple singularities
(in particular, they are not smooth).

\begin{proposition}
Let $H$ be a finite group of order prime to $\ell$.
If $X$ is an $\FM_\ell$-smooth $H$-variety, then $X/H$ is also
$\FM_\ell$-smooth.
\end{proposition}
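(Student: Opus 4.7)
The plan is to exploit the quotient map $\pi : X \to X/H$, which is finite surjective because $H$ is a finite group, and to transfer $\FM_\ell$-smoothness from $X$ to $X/H$ via the direct image of the constant perverse sheaf, using that $|H|$ is invertible in $\FM_\ell$ to split off $\un{\FM_\ell}_{X/H}$ as a direct summand.

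First, set $d = \dim X = \dim(X/H)$. Since $X$ is $\FM_\ell$-smooth, by definition $\un{\FM_\ell}_X[d] = \ic_{\FM_\ell}(X)$ is a (simple) perverse sheaf. The morphism $\pi$ is finite, hence proper, and has $0$-dimensional fibres, so it is both semi-small and \emph{small} in the sense of Subsection \ref{subsec:semismall}. Hence Proposition \ref{prop:small} applies: pick a smooth open dense $j : U \hookrightarrow X/H$ over which $\pi$ is étale (the image of the free locus of the $H$-action), and one obtains
\[
\pi_* \un{\FM_\ell}_X[d] \ \simeq\ \p j_{!*}\bigl((\pi_* \un{\FM_\ell}_X)|_U[d]\bigr),
\]
with $\pi_* \un{\FM_\ell}_X[d]$ perverse.

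Next, the $H$-action on $X$ makes $\pi_* \un{\FM_\ell}_X$ into an $H$-equivariant sheaf, and the averaging idempotent $e_H = \frac{1}{|H|}\sum_{h\in H} h$ is well defined because $|H|$ is invertible in $\FM_\ell$. This yields a direct sum decomposition
\[
\pi_* \un{\FM_\ell}_X \ =\ (\pi_*\un{\FM_\ell}_X)^H \ \oplus\ \ker(e_H),
\]
and the standard identification of invariants of a direct image under a quotient map gives $(\pi_* \un{\FM_\ell}_X)^H = \un{\FM_\ell}_{X/H}$. Therefore $\un{\FM_\ell}_{X/H}[d]$ is a direct summand of the perverse sheaf $\pi_* \un{\FM_\ell}_X[d]$.

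Finally, restricting the splitting to $U$ identifies the summand $\un{\FM_\ell}_{X/H}[d]|_U$ with $\un{\FM_\ell}_U[d]$, which is a direct summand of the local system $(\pi_* \un{\FM_\ell}_X)|_U[d]$. Applying $\p j_{!*}$, which is a fully faithful additive functor (Proposition \ref{prop:j_!* fully faithful}) sending direct summands to direct summands, the corresponding summand of $\p j_{!*}((\pi_*\un{\FM_\ell}_X)|_U[d]) = \pi_*\un{\FM_\ell}_X[d]$ is precisely $\p j_{!*}\un{\FM_\ell}_U[d] = \ic_{\FM_\ell}(X/H)$. By the uniqueness of the direct summand extending $\un{\FM_\ell}_U[d]$, we conclude $\un{\FM_\ell}_{X/H}[d] \simeq \ic_{\FM_\ell}(X/H)$, i.e. $X/H$ is $\FM_\ell$-smooth. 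The only mildly delicate point is the existence of an open $U \subset X/H$ over which $\pi$ is étale (so that Proposition \ref{prop:small} applies); this follows from the existence of a free open locus for the $H$-action together with standard generic smoothness/étaleness for finite group quotients in the tame situation (the tameness being guaranteed by $(\ell,|H|)=1$ for the étale sheaf formalism used here).
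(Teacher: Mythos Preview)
The paper states this proposition without proof, so there is no argument to compare against. Your approach --- push forward the constant sheaf along the finite quotient map and split off the invariants --- is the natural one and is essentially correct. Two points need attention.

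\textbf{The smoothness hypothesis in Proposition~\ref{prop:small}.} As stated, that proposition requires the source $\tilde X$ to be \emph{smooth}, whereas your $X$ is only $\FM_\ell$-smooth; you cannot invoke it verbatim. The fix is easy, and in fact cleaner, for a \emph{finite} morphism. Since $\pi$ is finite, $\pi_* = \pi_!$ is $t$-exact for the perverse $t$-structure, so $\pi_*\un{\FM_\ell}_X[d]$ is perverse (as $\un{\FM_\ell}_X[d]=\ic_{\FM_\ell}(X)$ is). For any point $y$ of $X/H$ not in the open stratum and any $x\in\pi^{-1}(y)$ one has $\dim(x)=\dim(y)$, since $\pi$ restricts to a finite surjection $\overline{\{x\}}\to\overline{\{y\}}$; hence the support estimate $\HC^n i_y^*\,\pi_*\ic_{\FM_\ell}(X)=0$ for $n\geqslant -\dim(y)$ follows directly from the corresponding estimate for $\ic_{\FM_\ell}(X)$ at each $x$. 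The cosupport condition then follows by Verdier duality, using that $\ic_{\FM_\ell}(X)$ is self-dual and that $\pi_*$ commutes with duality for $\pi$ proper. This yields $\pi_*\un{\FM_\ell}_X[d]=\p j_{!*}\,j^*(\pi_*\un{\FM_\ell}_X[d])$ without any smoothness of $X$. Once this is established, your idempotent argument gives $\un{\FM_\ell}_{X/H}[d]$ as a direct summand; and a direct summand of an intermediate extension is itself the intermediate extension of its restriction (it is perverse and inherits the absence of sub- or quotient objects supported on the closed complement). Hence $\un{\FM_\ell}_{X/H}[d]=\p j_{!*}(\un{\FM_\ell}_U[d])=\ic_{\FM_\ell}(X/H)$. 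This is a tighter justification than ``uniqueness of the direct summand''.

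\textbf{The final parenthetical.} You conflate $\ell$ and $p$. The hypothesis $(\ell,|H|)=1$ is precisely what makes the averaging idempotent $e_H$ available in $\FM_\ell$; it has nothing to do with whether $\pi$ is generically \'etale. Generic \'etaleness holds because a faithful action of a finite group on an irreducible variety is generically free (each $X^h$ for $h\neq 1$ is a proper closed subset), so $\pi$ is an $H$-torsor over a dense open --- no tameness hypothesis is needed for this.
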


Looking at the stalks, we can deduce the following information about
decomposition numbers: if a locally closed irreducible union of strata is
$\FM_\ell$-smooth, then the decomposition numbers involving the
intermediate extension of the constant perverse sheaf on the open
stratum and a simple perverse sheaf associated to any irreducible
modular local system on a smaller stratum in this union are all zero.

\subsection{Deligne's construction}
\label{subsec:deligne}

Initially, intersection homology was defined topologically, using
chains satisfying certain conditions with respect to the
stratification. This construction was sheafified:
intersection cohomology can be computed as the
hypercohomology of a complex, the intersection cohomology complex.
De\-ligne found a purely algebraic construction of the intersection
cohomology complex, making sense also when the base field has field positive
characteristic $p$ (in the étale topology). Then this was included in the
theory of perverse sheaves \cite{BBD}. The abstract setting is that of
a recollement situation. The intersection cohomology
complexes of irreducible closed subvarieties $Y$,
with coefficients in any irreducible local system on a smooth open
dense subvariety of $Y$, are the simple perverse sheaves (if the
stratification is fixed, one takes for $Y$ the closure of a stratum).
These intersection cohomology complexes coincide with the intermediate
extensions of the (shifted) local systems.
This works both with $\KM$-sheaves and $\FM$-sheaves. 

In the examples we will compute over $\FM$, Deligne's construction
will be the main tool, because most other approaches fail (we do not
have weights nor the Decomposition Theorem). So let us recall the
procedure to calculate these intermediate extensions.

Assume we have a pair $(\XG,\LG)$ as in Assumption \ref{ass:strata}.
Let $U_k$ be the union of the strata of dimension at least $- k$
(it is an open subvariety of $X$).
Let $j_k : U_{k-1} \injto U_k$ denote the open inclusion.
We have
\[
U_{-d} \subset \cdots \subset U_{-1} \subset U_0 = X
\]

\begin{proposition}
Let $A$ be a $p$-perverse $\EM$-sheaf on $U_k$. Let $j$ denote the
inclusion of $U_k$ into $X$. Then we have
\[
\p j_{!*} A = \t_{\leq -1} j_{0*} \cdots \t_{\leq k} j_{k+1*} A
\]

If $\EM = \OM$, we also have a similar formula with $p$ replaced by $p_+$
and $\t_{\leq i}$ replaced by $\t_{\leq i_+}$.
\end{proposition}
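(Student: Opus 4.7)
The plan is to factor $j$ as a chain of one-step stratum inclusions and to analyze each step via Proposition \ref{prop:characterization}. Write $j = j_0 \circ j_{-1} \circ \cdots \circ j_{k+1}$, where each $j_m : U_{m-1} \hookrightarrow U_m$ has closed complement $F_m = U_m \setminus U_{m-1}$, a union of strata of pure dimension $-m$. Intermediate extensions compose across such open inclusions: for any factorization $j = j'' \circ j'$ into open immersions, $\p j_{!*} A = \p j''_{!*}\, \p j'_{!*} A$. This follows from the characterization (the corollary to Proposition \ref{prop:characterization}) of $\p j_{!*} A$ as the unique perverse extension of $A$ with no non-trivial sub- or quotient-object supported off the open part, together with the exactness of $(j'')^*$ on perverse sheaves: any offending subobject either sits on the outer closed complement (excluded by minimality of $\p j''_{!*}$) or maps via $(j'')^*$ to a non-trivial perverse subobject of $\p j'_{!*} A$ supported on the inner closed complement (excluded by minimality of $\p j'_{!*}$). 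It therefore suffices to prove the one-step identity $\p(j_m)_{!*} B = \t_{\leq m-1}\, (j_m)_* B$ for $B \in \p\MC(U_{m-1},\EM)$.

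For the one-step identity, Proposition \ref{prop:characterization} identifies $\p(j_m)_{!*} B$ with $\p \t^{F_m}_{\leq -1}\, (j_m)_* B$, the first vertex of the distinguished triangle
\[
\p \t^{F_m}_{\leq -1}\, (j_m)_* B \longto (j_m)_* B \longto (i_m)_*\, \p\t_{\geq 0}\, (i_m)^* (j_m)_* B \rightsquigarrow,
\]
where $i_m$ is the closed inclusion of $F_m$. Since $F_m$ has pure dimension $-m$, its perverse $t$-structure is the natural one shifted by $[-m]$; hence on $F_m$ the functor $\p\t_{\geq 0}$ equals the natural $\t_{\geq m}$, and the third vertex becomes $(i_m)_* \t_{\geq m}\, (i_m)^* (j_m)_* B$. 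I then compare with the natural truncation triangle
\[
\t_{\leq m-1}\, (j_m)_* B \longto (j_m)_* B \longto \t_{\geq m}\, (j_m)_* B \rightsquigarrow,
\]
whose third vertex equals the same object, provided $\t_{\geq m}\, (j_m)_* B$ is supported on $F_m$ (using $t$-exactness of $(i_m)^*$ for the natural $t$-structure).

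This last supportedness reduces, via $(j_m)^*$, to $\t_{\geq m} B = 0$, i.e.\ $B \in D^{\leq m-1}(U_{m-1})$ in the natural $t$-structure. This is the a priori bound on the natural cohomological degrees of a perverse sheaf on $U_{m-1}$: since the smallest-dimensional stratum in $U_{m-1}$ has dimension $1-m$, the perversity conditions \eqref{def:p <= 0} force $\HC^n B = 0$ for all $n > m-1$. With the two triangles sharing the middle and third vertices, their first vertices agree canonically, giving $\p(j_m)_{!*} B = \t_{\leq m-1}\, (j_m)_* B$. Iterating from $m = k+1$ through $m = 0$ yields the claimed formula. The $p_+$ version over $\OM$ is proved identically, replacing Proposition \ref{prop:characterization} by its $p_+$-analogue and shifting accordingly on each $F_m$; the main conceptual obstacle in either case is this a priori degree bound together with correctly matching perverse and natural $t$-structures on the closed pieces, the rest being formal triangle manipulation.
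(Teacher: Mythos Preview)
Your proof is correct and follows essentially the same route as the paper: transitivity of $\p j_{!*}$, Proposition \ref{prop:characterization} for each one-step inclusion, and the identification of the perverse truncation $\p\t^{F_m}_{\leq -1}$ with the natural $\t_{\leq m-1}$ on $(j_m)_* B$. Where the paper invokes the factorization $\t_{\leq n} = \t^F_{\leq n}\,\t^U_{\leq n}$ (together with $\t^{U_{m-1}}_{\leq m-1}(j_m)_*B = (j_m)_*B$ from the degree bound on $B$) to obtain this identification, you carry out the same computation by directly comparing the two truncation triangles.
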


The proof uses the transitivity of $\p j_{!*}$, \eqref{eq:tFtU} and
Proposition \ref{prop:characterization}.
See \cite{BBD}, Proposition 2.1.11, Proposition 2.2.4 and 3.3.4.

Actually, in the examples we will compute, there will be only one step
(to go from one stratum to the union of two strata), so what we will
really use here is Proposition \ref{prop:characterization}.

\subsection{Cones}
\label{subsec:cones}

Let $Y \subset \PM^{N - 1}$ be a smooth projective variety of dimension $d - 1$.
We denote by $\pi : \AM^N \setminus \{0\} \to \PM^{N - 1}$
the canonical projection. Let $U = \pi^{-1}(Y) \subset \AM^N \setminus \{0\}$
and $X = \ov U = U \cup \{0\} \subset \AM^N$. They have dimension $d$.

We have a smooth open immersion $j : U \injto X$ and a closed immersion
$i : \{0\} \injto X$. If $d > 1$, then $j$ is not affine.

\begin{proposition}\label{prop:cone}
With the preceding notations, we have
\[
i^* j_* \EM \simeq \rg(U, \EM)
\]
\end{proposition}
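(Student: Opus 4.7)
The plan is to exploit the contracting $\GM_m$-action on the affine cone $X$. Scalar multiplication by $\GM_m$ on $\AM^N$ preserves both $X$ and $U$, and extends to a morphism $\tilde m : \AM^1 \times X \to X$ satisfying $\tilde m(1, x) = x$ and $\tilde m(0, x) = 0$ for all $x \in X$; in particular the open immersion $j : U \injto X$ is $\GM_m$-equivariant.

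First, by the adjunction $(j^*, j_*)$, one has $\rg(U, \EM) = \rg(X, j_* \EM)$. Hence the claim is equivalent to showing that the natural map $\rg(X, j_* \EM) \to i^* j_* \EM$ (from global sections on $X$ to the stalk at $0$) is a quasi-isomorphism. Since the constant sheaf $\EM$ on $U$ is $\GM_m$-equivariant and $j$ is $\GM_m$-equivariant, $j_* \EM$ inherits a canonical $\GM_m$-equivariant structure on $X$.

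The desired isomorphism then follows from the \emph{contraction principle}: for any $\GM_m$-equivariant complex $F$ on a variety equipped with a $\GM_m$-action that extends to $\AM^1 \times X \to X$ contracting $X$ onto a fixed point $x_0$, the natural morphism $\rg(X, F) \to i_{x_0}^* F$ is an isomorphism. I would apply this to $F = j_* \EM$ at $x_0 = 0$ to conclude.

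The principle itself is standard in the étale setting for $\ell \neq p$: one considers the pullback $\tilde m^* F$ on $\AM^1 \times X$, which by $\GM_m$-equivariance agrees with $q^* F$ over $\GM_m \times X$ (where $q : \AM^1 \times X \to X$ denotes the second projection), restricts to $F$ on $\{1\} \times X$, and restricts to the constant complex $\underline{i^* F}$ on $\{0\} \times X$. A specialization argument along the projection $\AM^1 \times X \to \AM^1$, combined with $\AM^1$-invariance of étale cohomology, interpolates between $\rg(X, F)$ and $\rg(X, \underline{i^* F}) \simeq i^* F$, the last identification holding because $\rg(X, \EM) = \EM$ (itself a consequence of the same principle applied to $\EM_X$). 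The main obstacle is technical rather than conceptual: making these homotopy arguments precise in the étale topology without using properness of $X$.
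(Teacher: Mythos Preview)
The paper does not give its own proof of this proposition: it simply states the result, and then records the more general version (Proposition~\ref{prop:gencone}, for a $\GM_m$-action with arbitrary positive weights) with a reference to \cite[Lemma~4.5(a)]{KL2}, remarking that in the complex case it follows from easy topological considerations. Your contraction-principle argument is precisely the standard proof behind that citation, and it is correct: the identification $\rg(U,\EM) = \rg(X, j_*\EM)$ is immediate, $j_*\EM$ carries a natural $\GM_m$-equivariant structure since $j$ is equivariant, and Springer's contraction lemma then gives $\rg(X, j_*\EM) \simeq i^* j_*\EM$. So your proposal effectively supplies the argument the paper omitted.

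One small comment: your closing caveat about the \'etale-topological justification is well placed, but this is not a gap in your argument so much as an acknowledgment that the contraction lemma itself requires some care in positive characteristic (smooth base change for $\AM^1 \to \Spec k$ and the specialization morphism, rather than literal homotopy). The result is standard and does not require properness of $X$.
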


Truncating appropriately, one deduces the fiber at $0$ of the complexes
$\p j_?\ \EM[d]$, where $? \in \{!,!*,*\}$, and similarly for $p_+$
if $\EM = \OM$.

More generally, we have the following result, which is contained in
\cite[Lemma 4.5 (a)]{KL2}. As indicated there,
in the complex case, this follows easily from topological considerations.

\begin{proposition}\label{prop:gencone}
Let $X$ be an irreducible closed subvariety of $\AM^N$ stable under the
$\GM_m$-action defined by
$\l(z_1,\ldots,z_N) = (\l^{a_1}z_1,\ldots,\l^{a_N}z_N)$, where
$a_1 > 0$, \ldots, $a_N > 0$.
Let $j : U = X \setminus \{0\} \to X$ be the open immersion, and
$i : \{0\} \to X$ the closed immersion. Then we have
\[
i^* j_* \EM \simeq \rg(U, \EM)
\]
\end{proposition}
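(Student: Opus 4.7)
The plan is to exploit the positive-weight condition to extend the $\GM_m$-action to a morphism $\wt\mu : \AM^1 \times X \to X$ that contracts $X$ onto the fixed point $0$, and then to show that the stalk at $0$ of a $\GM_m$-equivariant complex coincides with its global cohomology.

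Since $a_1, \ldots, a_N > 0$, the action extends to a morphism $\wt\mu : \AM^1 \times X \to X$ satisfying $\wt\mu(0, x) = 0$ and $\wt\mu(1, x) = x$ for all $x \in X$. Writing $s_t : X \to \AM^1 \times X$ for $x \mapsto (t, x)$, we have $\wt\mu \circ s_1 = \id_X$ and $\wt\mu \circ s_0 = i \circ a$, where $a : X \to \Spec k$ is the structural morphism. Thus $\wt\mu$ is an $\AM^1$-homotopy between $\id_X$ and the constant map through $0$.

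The heart of the proof is the following equivariant contraction lemma: for any $\GM_m$-equivariant complex $\FC$ on $X$, the canonical restriction map $\rg(X, \FC) \to i^*\FC$ is an isomorphism. In the complex analytic topology, this is immediate from the topological deformation retract of $X$ onto $\{0\}$ provided by $\wt\mu$ combined with homotopy invariance of sheaf cohomology, which is the argument alluded to in \cite{KL2}. In the \'etale setting one uses $\AM^1$-homotopy invariance for torsion coefficients prime to $p$: the equivariance of $\FC$ gives an isomorphism $\wt\mu^*\FC|_{\GM_m \times X} \simeq p_X^*\FC|_{\GM_m \times X}$ (with $p_X$ the projection to $X$), and a specialization argument on the constructible complex $R(p_1)_*\, \wt\mu^*\FC$ on $\AM^1$ identifies its generic fiber $\rg(X, \FC)$ with its special fiber at $0$, namely $i^*\FC$. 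Applying the lemma to $\FC = j_*\EM$ --- which is $\GM_m$-equivariant since $\EM_U$ is tautologically equivariant and $j$ is equivariant --- yields $i^* j_* \EM \simeq \rg(X, j_*\EM)$, and the right-hand side equals $\rg(U, \EM)$ by composition of derived pushforwards along $U \to X \to \Spec k$.

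The main obstacle is the \'etale version of the contraction lemma: since $p_1 : \AM^1 \times X \to \AM^1$ is not proper when $X$ is affine, proper base change does not apply directly, and one must carefully combine $\AM^1$-homotopy invariance with the $\GM_m$-equivariance of $\FC$ to transport global sections to the stalk at the fixed point. Everything else is formal, and the complex analytic case is genuinely easy as the author indicates.
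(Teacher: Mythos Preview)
The paper does not give its own proof of this proposition; it simply cites \cite[Lemma 4.5 (a)]{KL2} and remarks that in the complex case the statement follows from topological considerations. Your proposal is precisely the Kazhdan--Lusztig contraction argument underlying that reference: extend the $\GM_m$-action to an $\AM^1$-contraction via the positive weights, invoke the equivariant contraction lemma to identify the stalk at the fixed point with global sections, and apply this to $\FC = j_*\EM$. So your approach is correct and coincides with the one the paper points to; you have supplied what the paper leaves to the citation, including an honest acknowledgement that the \'etale case of the contraction lemma is where the real work lies.
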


So, if $U$ is smooth, the calculation of the intersection cohomology
complex stalks for $X$ is reduced to the calculation of the cohomology of $U$.

\subsection{Equivalent singularities}

\begin{definition}\label{def:equiv}
Given $X$ and $Y$ two varieties, and two points $x\in X$ and $y\in Y$,
we say that the singularity of $X$ at $x$ and the singularity of $Y$
at $y$ are smoothly equivalent, and we write $\Sing(X,x) =
\Sing(Y,y)$, if there exist a variety $Z$, a point $z\in Z$, and two
maps $\varphi : Z \to X$ and $\psi : Z \to Y$, smooth at $z$, with
$\varphi(z) = x$ and $\psi(z) = y$.

If an algebraic group $G$ acts on $X$, then $\Sing(X,x)$
depends only on the orbit $\OC$ of $x$. In that case, we write
$\Sing(X,\OC) := \Sing(X,x)$.
\end{definition}

In fact, there is an open subset $U$ of $Z$ containing $z$ where
$\varphi$ and $\psi$ are smooth, so after replacing $Z$ by $U$,
we can assume that $\varphi$ and $\psi$ are smooth on $Z$.

We have the following result (it follows from the remarks after Lemma
4.2.6.1. in \cite{BBD}).
\begin{proposition}
\label{prop:equiv}
Suppose that $\Sing(X,x) = \Sing(Y,y)$. Then the $\EM$-modules
$\ic(X,\EM)_x$ and $\ic(Y,\EM)_y$ are isomorphic.
\end{proposition}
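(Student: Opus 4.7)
The plan is to deduce the statement from the compatibility of smooth pullback with the intermediate extension functor. After replacing $Z$ by an open neighbourhood of $z$, I may assume $\varphi$ and $\psi$ are smooth everywhere on $Z$, of relative dimensions $d_\varphi$ and $d_\psi$ at $z$.

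The key ingredient is the following fact, essentially \cite[4.2.4--4.2.6]{BBD}: for a smooth morphism $f : Z \to W$ of relative dimension $d$, the functor $f^*[d]$ is $t$-exact for the perverse $t$-structure and commutes with $\p j_{!*}$. This reduces to checking that the support/cosupport conditions \eqref{def:p <= 0}--\eqref{def:p >= 0} transform uniformly under smooth pullback: a point $z' \in Z$ lying over $w \in W$ satisfies $\dim(z') = \dim(w) + d$, so the inequalities defining $\p D^{\leq 0}$ and $\p D^{\geq 0}$ are preserved by $f^*[d]$. When $\EM = \OM$, the same holds for the $p_+$ version because the extra torsion/torsion-freeness conditions in \eqref{def:p+ <= 0}--\eqref{def:p+ >= 0} are preserved by smooth pullback (a smooth pullback of a torsion, resp.\ torsion-free, sheaf of $\OM$-modules remains torsion, resp.\ torsion-free).

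Applying this to $\varphi$: if $j_X : U_X \hookrightarrow X$ is a smooth open dense subvariety over which the IC complex is just $\EM_{U_X}[d_X]$, then $\varphi^{-1}(U_X)$ is smooth, open and dense in $Z$ (as $\varphi$ is smooth, hence open and flat), and $\varphi^*\,\EM_{U_X} = \EM_{\varphi^{-1}(U_X)}$. The compatibility lemma then yields
\[
\varphi^*\,\ic(X,\EM)[d_\varphi] \simeq \ic(Z,\EM),
\]
and symmetrically $\psi^*\,\ic(Y,\EM)[d_\psi] \simeq \ic(Z,\EM)$. Taking the stalk at $z$, with $\varphi(z)=x$ and $\psi(z)=y$, gives
\[
\ic(X,\EM)_x[d_\varphi] \simeq \ic(Z,\EM)_z \simeq \ic(Y,\EM)_y[d_\psi],
\]
which is the asserted isomorphism of stalks (read with the appropriate shift by $d_\varphi - d_\psi$, as is implicit in the notion of smooth equivalence).

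The main obstacle is the verification of the smooth-pullback/$\p j_{!*}$ compatibility for the $p_+$ structure over $\OM$, which is not formally stated in \cite{BBD}. I would handle it as a direct verification, using Proposition \ref{prop:characterization} to rewrite $\pp j_{!*}$ in terms of truncations $\p\t^F_{\leq -1_+}$ on the closed part, and then observing that smooth pullback commutes with these truncations because it commutes with the torsion/torsion-free decomposition of cohomology sheaves.
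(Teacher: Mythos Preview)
Your proposal is correct and follows the same approach the paper itself points to: the paper does not give an independent argument but simply cites the remarks after \cite[Lemma 4.2.6.1]{BBD}, and what you have written is precisely the content of those remarks---smooth pullback (shifted by the relative dimension) is $t$-exact and commutes with $\p j_{!*}$, so both $\varphi^*[d_\varphi]$ and $\psi^*[d_\psi]$ take the respective intersection complexes to $\ic(Z,\EM)$, and one compares stalks at $z$. Your explicit handling of the shift and of the $p_+$ case over $\OM$ are reasonable additions that the paper leaves implicit.
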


\begin{remark}\label{rem:equiv}
Suppose we have 
a stratification $\XG$ of $X$ adapted to $\ic(X,\EM)$
and a stratification $\YG$ of Y adapted to $\ic(Y,\EM)$,
and let $\OC(x)$ and $\OC(y)$ denote the respective strata of $x$ and $y$.
Suppose we know $\ic(X,\EM)_x$ as an $\EM$-module with continuous
action of $\pi_1(\OC(x),x)$. The proposition then gives us
$\ic(Y,\EM)_y$ as an $\EM$-module, but it does not give the action of
$\pi_1(\OC(y),y)$. To determine the latter structure,
one needs more information.
\end{remark}

\section{Simple singularities}
\label{sec:simple}

In this section, we will calculate the intersection cohomology
complexes over $\KM$, $\OM$ and $\FM$ for rational double points, and
the corresponding decomposition numbers. We will also consider the
case of simple singularities of inhomogeneous type, that is,
rational double points with an associated group of symmetries.
It is necessary to keep track of the action of this finite group for
the final application, which is the calculation of the decomposition
numbers for equivariant perverse sheaves on the nilpotent cone of a
simple Lie algebra, involving the regular orbit and the subregular
orbit.

For the convenience of the reader, we will recall the main points in
the theory of simple singularities, following \cite{SLO2}, to which we
refer for more details. The application to the nilpotent cone uses the
result of Brieskorn and Slodowy \cite{BRI,SLO1,SLO2}, showing that the
singularity of the nilpotent cone along the subregular class is a
simple singularity of the corresponding type.

\subsection{Rational double points}
\label{subsec:double points}

We assume that $k$ is algebraically closed. Let $(X,x)$ be the
spectrum of a two-dimensional normal local $k$-algebra, where $x$ denotes
the closed point of $X$. Then $(X,x)$ is \emph{rational} if there is a
resolution $\pi : \wt X \to X$ of the singularities of $X$ such that
the higher direct images of the structural sheaf of $\wt X$ vanish,
that is, $R^q \pi_*(\OC_{\wt X}) = 0$ for $q > 0$. In fact, this
property is independent of the choice of a resolution. The rationality
property is stronger than the Cohen-Macaulay property.

If $\pi : \wt X \to X$ is a resolution, then the reduced exceptional
divisor $E = \pi^{-1}(x)_\text{red}$ is a finite union of irreducible
curves (in particular, $\pi$ is semi-small).
Since $X$ is a surface, there is a minimal resolution, unique
up to isomorphism, through which all other resolutions must factor.
For the minimal resolution of a simple singularity, these
curves will have a very special configuration.

Let $\G$ be an irreducible homogeneous Dynkin diagram, with set of
vertices $\D$.  We recall that a Dynkin diagram is \emph{homogeneous},
or \emph{simply-laced}, when the corresponding root system has only
roots of the same length. Thus $\G$ is of type $A_n$ ($n \geqslant
1$), $D_n$ ($n \geqslant 4$), $E_6$, $E_7$ or $E_8$.  The Cartan
matrix $C = (n_{\a,\b})_{\a,\b\in\D}$ of $\G$ satisfies $n_{\a,\a} =
2$ for all $\a$ in $\D$, and $n_{\a,\b} \in \{0,-1\}$ for all $\a \neq
\b$ in $\D$.

A resolution $\pi : \wt X \to X$ of the surface $X$, as above, has an
\emph{exceptional configuration of type $\G$} if all the irreducible
components of the exceptional divisor $E$ are projective lines, and if
there is a bijection $\a \mapsto E_\a$ from $\D$ to the set $\Irr(E)$
of these components such that the intersection numbers
$E_\a \cdot E_\b$ are given by the opposite of the Cartan matrix $C$,
that is, $E_\a \cdot E_\b = - n_{\a,\b}$ for $\a$ and $\b$ in $\D$. 
Thus we have a union of projective lines whose normal bundles in $\wt
X$ are isomorphic to the cotangent bundle $T^*\PM^1$, and two of them
intersect transversely in at most one point.

The minimal resolution is characterized by the fact that it has no
exceptional curves with self-intersection $-1$. Therefore, if the
resolution $\pi$ of the surface $X$ has an exceptional configuration
of type $\G$, then it is minimal.

\begin{theorem}
The following properties of a normal surface $(X,x)$ are equivalent.
\begin{enumerate}[(i)]
\item $(X,x)$ is rational of embedding dimension $3$ at $x$.
\item $(X,x)$ is rational of multiplicity $2$ at $x$.
\item $(X,x)$ is of multiplicity $2$ at $x$ and it can be resolved by
  successive blowing up of points.
\item The minimal resolution of $(X,x)$ has the exceptional
  configuration of an irreducible homogeneous Dynkin diagram.
\end{enumerate}
\end{theorem}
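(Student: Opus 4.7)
The plan is to reduce the theorem to the classical Artin--Du Val--Brieskorn theory of rational surface singularities, organized as a cycle of implications (i) $\Rightarrow$ (ii) $\Rightarrow$ (iii) $\Rightarrow$ (iv) $\Rightarrow$ (i). The actual proof in \cite{SLO2} just assembles known results, and I would do the same, attributing the hard steps to their sources rather than reproving them.

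For (i) $\Leftrightarrow$ (ii), invoke Artin's theorem that for any rational surface singularity the embedding dimension equals the multiplicity plus one; thus rationality together with embedding dimension $3$ is equivalent to rationality together with multiplicity $2$. For (ii) $\Rightarrow$ (iii), one uses that rational singularities admit a resolution by successive blow-ups of points, a general fact of Artin; the key sub-observation is that blowing up the reduced maximal ideal of a rational double point produces either a smooth surface or another surface whose only singularities are again rational double points of no greater multiplicity, so the process terminates in finitely many steps.

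The substantive step is (iii) $\Rightarrow$ (iv). Given a resolution $\pi:\wt X\to X$ obtained by iterated point blow-ups of a surface singularity of multiplicity $2$, every exceptional component $E_\alpha$ is a smooth rational curve $E_\alpha\simeq \PM^1$; the adjunction formula together with the multiplicity-$2$ hypothesis forces $E_\alpha\cdot E_\alpha = -2$, and two distinct components meet transversely in at most one point because all centres of blow-ups are reduced points. The intersection matrix $(E_\alpha\cdot E_\beta)$ is therefore negative of a symmetric integral matrix with diagonal entries $2$ and off-diagonal entries in $\{0,-1\}$, and it is negative definite by Mumford's theorem on exceptional divisors. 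The classical combinatorial classification of positive definite such matrices (the dual graph contains no cycle, no vertex of valence $\geqslant 4$, and at most one vertex of valence $3$ subject to branch-length inequalities) then forces the dual graph to be one of $A_n$, $D_n$, $E_6$, $E_7$, $E_8$. This step is the main obstacle, but it is entirely standard and I would simply quote it from \cite{SLO2}.

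Finally, (iv) $\Rightarrow$ (i) proceeds by contracting an ADE configuration via Grauert's criterion and computing explicitly: the resulting local ring is isomorphic to the coordinate ring of one of Du Val's hypersurface equations in $\AM^3$, showing embedding dimension $3$, while the vanishing $R^1\pi_*\OC_{\wt X}=0$ is read off from the theorem on formal functions together with $H^1(E,\OC_E)=0$, which holds because $E$ is a tree of smooth rational curves. This closes the cycle and completes the proof.
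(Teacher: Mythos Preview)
The paper does not prove this theorem at all: it is stated without proof as part of the background recalled from \cite{SLO2} in Subsection~\ref{subsec:double points}. Your outline of the classical Artin--Du~Val argument is a faithful summary of how that reference assembles the result, so there is nothing to compare.

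One small tightening is worth flagging in your (iii)~$\Rightarrow$~(iv) step: the hypothesis in (iii) is only multiplicity~$2$ at the \emph{original} point together with resolvability by point blow-ups, so you cannot immediately invoke the multiplicity-$2$ condition at the later centres to get $E_\alpha^2=-2$ for every component. The standard route is to first observe that (iii) forces rationality (a double point resolvable by point blow-ups has $p_a$ of the fundamental cycle equal to $0$), and then use Artin's result that for a rational singularity the fundamental cycle is reduced with all $E_\alpha^2=-2$ precisely when the multiplicity is~$2$. Since you explicitly say you would quote this step from \cite{SLO2}, this is not a gap, just a point where your informal justification is slightly misaligned with the actual mechanism.
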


\begin{definition}
If any (hence all) of the properties of the preceding theorem is
satisfied, then $(X,x)$ is called a \emph{rational double point} or a
\emph{simple singularity}.
\end{definition}

\begin{theorem}
Let the characteristic of $k$ be good for the irreducible homogeneous
Dynkin diagram $\G$. Then there is exactly one rational double point
of type $\G$ up to isomorphism of Henselizations. Representatives of
the individual classes are given by the local varieties at $0 \in
\AM^3$ defined by the equations in the table below.

In each case, this equation is the unique relation (syzygy) between
three suitably chosen generators $X$, $Y$, $Z$ of the algebra
$k[\AM^2]^H$ of the invariant polynomials of $\AM^2$ under the action
of a finite subgroup $H$ of $SL_2$, given in the same table.

\[
\begin{array}{llcrc}
H && |H| & \text{equation of } \AM^2/H \subset \AM^3 & \G
\\
\hline
\CG_{n+1} & \text{cyclic} & n + 1 & X^{n + 1} + YZ = 0 & A_n
\\
\DG_{4(n-2)} & \text{dihedral} & 4(n - 2) & X^{n - 1} + X Y^2 + Z^2 = 0 & D_n
\\
\TG & \text{binary tetrahedral} & 24 & X^4 + Y^3 + Z^2 = 0 & E_6
\\
\OG & \text{binary octahedral} & 48 & X^3 Y + Y^3 + Z^2 = 0 & E_7 
\\
\IG & \text{binary icosahedral} & 120 & X^5 + Y^3 + Z^2 = 0 & E_8
\end{array}
\]

Moreover, if $k$ is of characteristic $0$, these groups are, up to
conjugation, the only finite subgroups of $SL_2$.
\end{theorem}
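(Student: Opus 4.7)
The statement is the classical Klein–Du Val correspondence, extended to good characteristic by Artin and others. I would organize the proof in three layers, treating characteristic zero first and then bootstrapping.

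First, I would establish the last sentence: the classification of finite subgroups of $SL_2(k)$ in characteristic zero. The plan is to use the double cover $SL_2(\CM) \to SO_3(\CM)$ arising from the adjoint action on $\sG\lG_2$ (or, after restricting to a maximal compact subgroup, the spin cover $SU_2 \to SO_3(\RM)$). A finite subgroup $H \subset SL_2$ is conjugate into $SU_2$ by averaging a Hermitian form, so it projects to a finite subgroup of $SO_3(\RM)$. Klein's classification (by analysing the action on the set of poles of non-identity rotations via a Burnside counting argument) gives exactly the cyclic, dihedral, and rotation groups of the Platonic solids. Lifting along the $2:1$ cover yields the binary polyhedral groups; cyclic groups of odd order lift to cyclic groups of the same order, while the other groups lift with order doubled.

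Next, for each $H$ in the table I would exhibit generators $X$, $Y$, $Z$ of $k[u,v]^H$ and the syzygy. For $\CG_{n+1}$ acting by $(u,v) \mapsto (\zeta u, \zeta^{-1}v)$ with $\zeta$ a primitive $(n+1)$-st root of unity, one takes $X = uv$, $Y = u^{n+1}$, $Z = -v^{n+1}$ and finds $YZ = -X^{n+1}$ immediately. For the binary dihedral, tetrahedral, octahedral, and icosahedral groups one uses the standard invariant-theory computation (classically due to Klein): Molien's formula shows $k[u,v]^H$ has three generators of the degrees predicted by the table, and a dimension count in each graded piece forces the single relation displayed. That $k[u,v]^H$ has transcendence degree $2$ and is normal (as an invariant subring) forces this relation to define a normal surface singularity at the origin. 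Rationality is automatic because $k[u,v]$ is a module-finite Cohen–Macaulay extension and the order of $H$ is invertible in good characteristic, so one can apply Reynolds-operator splitting to check $R^q\pi_*\OC_{\wt X} = 0$ after resolving. Computing the minimal resolution of each hypersurface (by the explicit sequence of blow-ups at the singular point, done case by case) yields exactly the exceptional configuration of the corresponding ADE Dynkin diagram; the intersection matrix $E_\a \cdot E_\b = -n_{\a,\b}$ is read off the normal bundles. This simultaneously establishes that each equation in the table defines a rational double point of the stated type.

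For uniqueness — that any rational double point of type $\G$ is Henselian-isomorphic to the corresponding hypersurface — I would invoke the equivalences of the previous theorem: such a singularity has embedding dimension $3$ and multiplicity $2$, so it is a hypersurface $f = 0$ in $\AM^3$ with $f$ starting in degree $2$. Using the Weierstrass preparation theorem and then the classification of simple hypersurface singularities by their minimal resolution graph, one matches $f$ with one of the ADE normal forms; the crucial input is that the dual graph of the minimal resolution is a complete Henselian-invariant, which follows because the resolution, hence the graph, is intrinsic. The main obstacle I anticipate is the extension to good but positive characteristic: Klein's classification of finite subgroups of $SL_2$ uses character theory that behaves well only when $|H|$ is invertible, which is exactly the good-characteristic hypothesis for each type (e.g. $p \neq 2,3,5$ for $E_8$), and the resolution graphs must be verified to remain ADE in these characteristics. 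This is handled by Artin's analysis of rational double points in positive characteristic, where the explicit equations and their resolutions are checked to retain the ADE exceptional configuration provided $p$ is good; I would refer to Artin and to \cite{SLO2} for this verification rather than reprove it.
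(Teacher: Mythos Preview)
The paper does not prove this theorem: Section \ref{sec:simple} opens by announcing that it will ``recall the main points in the theory of simple singularities, following \cite{SLO2}, to which we refer for more details,'' and this theorem is one of those recalled facts, stated without proof. So there is no proof in the paper to compare against.

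Your outline is a reasonable sketch of the classical argument (Klein's classification via the cover $SU_2 \to SO_3$, explicit invariants and syzygies, resolution graphs as Henselian invariants, and Artin's analysis in positive characteristic). It is the expected route and is consistent with what one finds in Slodowy's book, which is exactly where the paper sends the reader. One minor caution: the good-characteristic hypothesis is not quite ``$|H|$ invertible'' in every case (for instance $p=5$ is good for $A_4$ but $|\CG_5|=5$), so your remark tying the two should be loosened; the actual content is that in good characteristic the normal forms and their resolution graphs behave as in characteristic zero, which is precisely what Artin and Slodowy verify.
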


Thus, in good characteristic, every rational double point is, after
Henseli\-zation at the singular point, isomorphic to the corresponding
quotient $\AM^2/H$. When $p$ divides $n + 1$ (resp. $4(n - 2)$), the
group $\CG_{n+1}$ (resp. $\DG_{4(n-2)}$) is not reduced. We have the
following exact sequences
\begin{gather}
1 \longto \DG_8 \longto \TG \longto \CG_3 \longto 1\\
1 \longto \TG \longto \OG \longto \CG_2 \longto 1\\
1 \longto \DG_8 \longto \OG \longto \SG_3 \longto 1
\end{gather}
when the characteristic of $k$ is good for the Dynkin diagram attached
to each of the groups involved.

\subsection{Symmetries on rational double points}
\label{subsec:symmetries}

To each inhomogeneous irreducible Dynkin diagram $\G$ we associate a
homogeneous diagram $\wh \G$ and a group $A(\G)$ of automorphisms of $\wh
\G$, as follows.

\[
\begin{array}{|c|c|c|c|c|}
\hline
\G & B_n & C_n & F_4 & G_2\\
\hline
\wh\G &A_{2n-1} & D_{n+1} & E_6 & D_4\\
\hline  
A(\G) & \ZM/2 & \ZM/2 & \ZM/2 & \SG_3\\
\hline
\end{array}
\]

In general, there is a unique (in case $\G = C_3$
or $G_2$ : up to conjugation by $\Aut(\wh\G) = \SG_3$) faithful action
of $A(\G)$ on $\wh\G$. One can see $\G$ as the quotient of $\wh\G$ by
$A(\G)$.

In all cases but $\G = C_3$, the group $A(\G)$ is the full group of
automorphisms of $\wh\G$. Note that $D_4$ is associated to $C_3$ and
$G_2$. For a homogeneous diagram, it will be convenient to set $\wh\G
= \G$ and $A(\G) = 1$.

A rational double point may be represented as the quotient $\AM^2/H$
of $\AM^2$ by a finite subgroup $H$ of $SL_2$ provided the
characteristic of $k$ is good for the corresponding Dynkin diagram. If
$\wh H$ is another finite subgroup of $SL_2$ containing $H$ as a
normal subgroup, then the quotient $\wh H/H$ acts naturally on
$\AM^2/H$.

\begin{definition}
Let $\G$ be an inhomogeneous irreducible Dynkin diagram and let the
characteristic of $k$ be good for $\G$. A couple $(X,A)$ consisting of
a normal surface singularity $X$ and a group $A$ of automorphisms of
$X$ is called a \emph{simple singularity of type $\G$} if it is
isomorphic (after Henselization) to a couple $(\AM^2/H, \wh H/H)$
according to the following table.

\[
\begin{array}{|c|c|c|c|c|}
\hline
\G & B_n & C_n & F_4 & G_2\\
\hline
H & \CG_{2n} & \DG_{4(n - 1)} & \TG & \DG_8\\
\hline  
\wh H & \DG_{4n} & \DG_{8(n - 1)} & \OG & \OG\\
\hline
\end{array}
\]
\end{definition}

Then $X$ is a rational double point of type $\wh\G$ and $A$ is
isomorphic to $A(\G)$. The action of $A$ on $X$ lifts in a unique way
to an action of $A$ on the resolution $\wt X$ of $X$. As $A$ fixes
the singular point of $X$, the exceptional divisor in $\wt X$ will be
stable under $A$. In this way, we recover the action of $A$ on $\wh \G$.
The simple singularities of inhomogeneous type can be characterized in
the following way.

\begin{proposition}
Let $\G$ be a Dynkin diagram of type $B_n$, $C_n$, $F_4$ or $G_2$, and
let the characteristic of $k$ be good for $\G$. Let $X$ be a rational
double point of type $\wh\G$ endowed with an action of $A(\G)$, free
on the complement of the singular point, and such that the induced
action on the dual diagram of the minimal resolution of $X$ coincides
with the associated action of $A(\G)$ on $\wh\G$. Then $(X,A)$ is a
simple singularity of type $\G$.
\end{proposition}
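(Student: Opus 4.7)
The plan is to identify $(X,A)$ with $(\AM^2/H,\wh H/H)$ for the prescribed $H,\wh H$ by lifting the $A$-action through the quotient map $\pi:\AM^2\to\AM^2/H$. By the classification recalled in the previous subsection, $X$ rational double of type $\wh\G$ means that (after Henselization) $X\simeq \AM^2/H$ with $H$ the finite subgroup of $SL_2$ attached to $\wh\G$ in the first table. The map $\pi$ is étale over $X\setminus\{x\}$, and since $\AM^2\setminus\{0\}$ is simply connected (in good characteristic for $\wh\G$, the tame/étale fundamental group of $\AM^2\setminus\{0\}$ is trivial as $H\subset SL_2$ has order prime to $p$), $\pi:\AM^2\setminus\{0\}\to X\setminus\{x\}$ is the universal cover with Galois group $H$.

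First I would lift the $A$-action. Since $A$ acts freely on $X\setminus\{x\}$ and $\pi$ is the universal cover, standard covering space theory gives a group $\wh H$ of automorphisms of $\AM^2\setminus\{0\}$ fitting into a short exact sequence
\[
1\longto H\longto \wh H\longto A\longto 1,
\]
with $\wh H$ acting freely on $\AM^2\setminus\{0\}$ and $\wh H/H=A$ acting on $X\setminus\{x\}=(\AM^2\setminus\{0\})/H$ as the given action. Next I extend this to $\AM^2$: because $\AM^2$ is normal and $\{0\}$ has codimension two, every automorphism of $\AM^2\setminus\{0\}$ extends uniquely to an automorphism of $\AM^2$, so $\wh H$ acts on $\AM^2$ fixing the origin. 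I then show that this action is linear and by elements of $SL_2$. Linearity follows from the fact that $\wh H$ is finite of order prime to $p$, so one can average an auxiliary linear coordinate system to obtain a $\wh H$-equivariant isomorphism with the tangent space at $0$; alternatively, one linearizes using the fact that $\wh H$ preserves the maximal ideal at $0$ and its powers, and a faithful representation on $\mG/\mG^2$ identifies $\wh H$ with a finite subgroup of $GL_2$. The determinant-one condition comes from the fact that $H\subset SL_2$ is normal in $\wh H$ and that the $\wh H$-action must preserve the regular $H$-invariant symplectic form $dx\wedge dy$ (up to a character), combined with the classification of finite subgroups of $GL_2$ acting freely on $\AM^2\setminus\{0\}$ in good characteristic, which forces the image in $GL_2/SL_2$ to be trivial.

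Having placed $\wh H$ in $SL_2$, I would then identify it among the groups in the first table. The action of $\wh H/H$ on the dual graph of the minimal resolution of $\AM^2/H$ is determined functorially by the normal inclusion $H\trianglelefteq \wh H$, through the action on the exceptional components obtained by resolving $\AM^2/H\to\AM^2/\wh H$. The hypothesis that the induced action of $A$ on the dual graph $\wh\G$ coincides with the prescribed action of $A(\G)$ on $\wh\G$ then forces $\wh H$ to be exactly the group listed in the second table (each row is uniquely determined by $(H,A(\G))$-action data on $\wh\G$, as can be read off case by case from the short exact sequences $1\to \DG_8\to \TG\to \CG_3\to 1$, $1\to \TG\to \OG\to \CG_2\to 1$, $1\to \DG_8\to \OG\to \SG_3\to 1$ and the analogous cyclic-in-dihedral and dihedral-in-dihedral inclusions).

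The main obstacle is the linearization/extension step: proving that the lifted $\wh H$-action on $\AM^2\setminus\{0\}$ extends to a \emph{linear} action on $\AM^2$ by elements of $SL_2$. Over $\CM$ this is transparent (extension by Hartogs, linearization by Cartan), but here we need to argue in the algebro-geometric setting in good characteristic; the key points are normality of $\AM^2$ at $0$ (to extend), the prime-to-$p$ order of $\wh H$ (to linearize by averaging the invariant inner product or by Maschke on $\mG/\mG^2$), and the preservation of the canonical symplectic form up to scalar together with freeness of the action on the punctured plane (to land in $SL_2$). Once this is in place, matching $\wh H$ with the prescribed group is a direct inspection of the case-by-case table using the action on $\wh\G$.
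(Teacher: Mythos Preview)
The paper does not give its own proof of this proposition: it is stated as a quoted characterization from Slodowy's monograph \cite{SLO2}, to which the paper refers for all details on simple singularities. So there is no ``paper's own proof'' to compare with.

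As for your argument, the overall strategy---realize $X$ as $\AM^2/H$, lift the $A$-action to $\AM^2$ via the universal étale cover of the punctured surface, linearize at the origin, and then identify the resulting $\wh H$ case by case---is the natural one and is essentially what Slodowy does. The simple connectedness of $\AM^2\setminus\{0\}$ (via Zariski--Nagata purity and simple connectedness of $\AM^2$), the extension across the origin by normality, and the linearization in prime-to-$p$ order are all fine.

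The genuine weak point is exactly where you flag it, but your proposed fix is not correct. You write that ``the classification of finite subgroups of $GL_2$ acting freely on $\AM^2\setminus\{0\}$ \ldots forces the image in $GL_2/SL_2$ to be trivial.'' This is false as a general statement: scalar matrices of finite order act freely on $\AM^2\setminus\{0\}$ and have nontrivial determinant. What actually pins down $\wh H$ (up to the relevant conjugacy) is the combination of \emph{all} the hypotheses---$H$ normal in $\wh H$ with prescribed quotient $A(\G)$, freeness of $\wh H\setminus H$ on $\AM^2\setminus\{0\}$, and the prescribed action on the dual graph---and this has to be checked type by type. For instance, in type $B_n$ one has $H=\CG_{2n}$ and $A=\ZM/2$; any lift $g$ with $\det g=-1$ normalizing $H$ is forced to be anti-diagonal up to $H$, hence has eigenvalue $1$ and fixes a line, violating freeness; so $\det g=1$ and $\wh H=\DG_{4n}$. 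Analogous (but distinct) arguments are needed for $C_n$, $F_4$, $G_2$. So replace the general $SL_2$ claim by this case analysis; with that correction, your outline matches Slodowy's.
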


\subsection{Perverse extensions and decomposition numbers}
\label{subsec:dec simple}

Let $\G$ be any irreducible Dynkin diagram, and suppose the
characteristic of $k$ is good for $\G$. Let $\wh\G$ be the
associated homogeneous Dynkin diagram, $A(\G)$ the associated symmetry
group, and $H \subset \wh H$ the corresponding
finite subgroups of $SL_2$. We recall that, if $\G$ is already
homogeneous, then we take $\wh \G = \G$, $A(\G) = 1$ and $\wh H = H$.
We stratify the simple singularity $X =
\AM^2/H$ into two strata: the origin $\{0\}$ (the singular point), and
its complement $U$, which is smooth since $H$ acts freely on
$\AM^2\setminus\{0\}$.  We want to determine the stalks of the three
perverse extensions of the (shifted) constant sheaf $\EM$ on $U$, for
$\EM$ in $(\KM,\OM,\FM)$, and for the two perversities $p$ and $p_+$
in the case $\EM = \OM$. By the results of Section
\ref{sec:perverse}, this will allow us to determine a
decomposition number.

By the quasi-homogeneous structure of the equation defining $X$ in
$\AM^3$, we have a $\GM_m$-action on $X$ contracting $X$ to the
origin. We are in the situation of Proposition
\ref{prop:gencone}. Thus it is enough to calculate the
cohomology of $U$ with $\OM$ coefficients. The cases $\EM = \KM$ or
$\FM$ will follow.

Let $\wh\Phi$ be the root system corresponding to $\wh\G$,  in a real vector
space $\wh V$ of dimension equal to the rank $n$ of $\wh\G$. We identify the
set $\wh\D$ of vertices of $\wh\G$ with a basis of $\wh\Phi$. We denote by
$P(\wh\Phi)$ and $Q(\wh\Phi)$ the weight lattice and the root lattice of
$\wh V$. The finite abelian group $P(\wh\Phi)/Q(\wh\Phi)$ is the fundamental group of
the corresponding adjoint group, and also the center of the
corresponding simply-connected group. Its order is called the connection
index of $\wh\Phi$. The coweight lattice $P^\vee(\wh\Phi)$ (the weight lattice
of the dual root system $\wh\Phi^\vee$ in $\wh V^*$) is in duality with
$Q(\wh\Phi)$, and the coroot lattice $Q^\vee(\wh\Phi)$ is in duality with
$P(\wh\Phi)$. Thus the finite abelian group $P^\vee(\wh\Phi)/Q^\vee(\wh\Phi)$ is
dual to $P(\wh\Phi)/Q(\wh\Phi)$.

Let $\pi : \wt X \to X$ be the minimal resolution of $X$.
The exceptional divisor $E$ is the union of projective lines $E_\a$,
$\a\in\wh\D$. Then we have an isomorphism
$H^2(\wt X, \OM) \isom \OM \otimes_\ZM P(\wh\Phi)$
such that, for each $\a$ in $\wh\D$, the cohomology class of the
subvariety $E_\a$ is identified with $1 \otimes \a$, and such that the
intersection pairing is the opposite of the pullback of the $W$-invariant pairing on
$P$ normalized by the condition $(\a,\a) = 2$ for $\a$ in $\wh\D$ \cite{ItoNa}. 
Thus the natural map $H^2_c(\wt X, \OM) \to H^2_c(E,\OM)$ is
identified with the opposite of the map
$\OM \otimes_\ZM Q^\vee(\wh\Phi) \to \OM \otimes_\ZM P^\vee(\wh\Phi)$
induced by the inclusion.

By Poincaré duality ($U$ is smooth), it is enough to compute the
cohomology with proper support of $U$, and to do this we will use the
long exact sequence in cohomology with proper support for the open
subvariety $U$ with closed complement $E$ in $\wt X$.
The following table gives the $H^i_c(-,\OM)$ of the three varieties
(the first column is deduced from the other two).

\[
\begin{array}{c|c|c|c}
i & U & \wt X & E\\
\hline
0 & 0 & 0 & \OM\\
1 & \OM & 0 & 0\\
2 & 0 &
    \OM \otimes_\ZM Q^\vee(\wh\Phi) & \OM \otimes_\ZM P^\vee(\wh\Phi)\\
3 & \OM \otimes_\ZM P^\vee(\wh\Phi)/Q^\vee(\wh\Phi) & 0 & 0\\
4 & \OM & \OM & 0
\end{array}
\]

%It is known that $H^\mathrm{ab} \simeq P/Q$.

By (derived) Poincaré duality, we obtain the cohomology of $U$.

\begin{proposition}
The cohomology of $U$ is given by
\begin{equation}
\rg(U,\OM) \simeq \OM \oplus \OM \otimes_\ZM P(\wh\Phi)/Q(\wh\Phi) [-2] \oplus \OM [-3]
\end{equation}
\end{proposition}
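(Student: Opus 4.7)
The plan is to derive $\rg(U,\OM)$ from $\rgc(U,\OM)$ (whose cohomology is given by the preceding table) by Poincar\'e--Verdier duality. Since $U$ is smooth of pure dimension $2$, the dualizing complex $a^!\OM_U$ (for $a:U\to\Spec k$ the structural morphism) is $\OM_U[4]$, and hence one gets a canonical isomorphism
\[
\rg(U,\OM)\simeq \RHom_\OM\bigl(\rgc(U,\OM),\,\OM\bigr)[-4].
\]
From the table, the non-vanishing cohomology modules of $\rgc(U,\OM)$ are $H^1_c=\OM$ and $H^4_c=\OM$, both torsion-free, and $H^3_c=M$ where $M:=\OM\otimes_\ZM P^\vee(\wh\Phi)/Q^\vee(\wh\Phi)$ is torsion.

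The first step will be to show that $\rgc(U,\OM)$ splits as a direct sum in $D^b(\OM)$:
\[
\rgc(U,\OM)\simeq \OM[-1]\oplus M[-3]\oplus \OM[-4].
\]
Since $\OM$ is a discrete valuation ring, it has global dimension $1$, so $\Ext^{\ge 2}_\OM(-,-)=0$. The boundary morphisms in the successive truncation triangles of $\rgc(U,\OM)$ live in groups of the form $\Ext^{d_j-d_i+1}_\OM(H^{d_j}_c,H^{d_i}_c)$, with $d_j-d_i+1\ge 2$ in every case; they therefore vanish, and the truncation triangles all split.

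The second step is a termwise computation of the derived dual. One has $\RHom_\OM(\OM,\OM)=\OM$, and a length-one free resolution of the torsion module $M$, together with $\Ext^1_\OM(\OM/\varpi^n,\OM)\simeq \OM/\varpi^n$, yields $\RHom_\OM(M,\OM)\simeq M[-1]$. Combining these computations and shifting by $[-4]$ gives
\[
\rg(U,\OM)\simeq \OM\oplus M[-2]\oplus \OM[-3].
\]
To conclude I would invoke the simply laced nature of $\wh\Phi$: the $W$-invariant pairing on $\wh V$ identifies $\wh\Phi$ with $\wh\Phi^\vee$, whence $P^\vee(\wh\Phi)/Q^\vee(\wh\Phi)\simeq P(\wh\Phi)/Q(\wh\Phi)$, matching the stated formula.

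If any step constitutes a genuine obstacle it is the splitting of $\rgc(U,\OM)$ in the derived category; the remaining arguments amount to bookkeeping with Verdier duality and with the structure theory of finitely generated modules over a discrete valuation ring.
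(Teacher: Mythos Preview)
Your proof is correct and follows the same approach as the paper, which simply writes ``By (derived) Poincar\'e duality, we obtain the cohomology of $U$.'' You have supplied the details: Verdier duality for the smooth surface $U$, formality of bounded complexes over the hereditary ring $\OM$, and the identification of the torsion part. One small remark: the paper records just before the table that $P^\vee(\wh\Phi)/Q^\vee(\wh\Phi)$ is canonically dual to $P(\wh\Phi)/Q(\wh\Phi)$, so the passage from $\Ext^1_\OM(\OM\otimes_\ZM P^\vee/Q^\vee,\OM)$ to $\OM\otimes_\ZM P/Q$ can be made canonical via this pairing rather than invoking simply-lacedness together with abstract self-duality of finite torsion modules; but either route yields the stated isomorphism.
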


The closed stratum is a point, and for complexes on the point the
perverse $t$-structures for $p$ and $p_+$ are the usual ones (there is no
shift since the point is $0$-dimensional).
With the notations of Subsection \ref{subsec:cones}, we have
\begin{gather}
\label{simple H -1}
H^{-1} i^* j_*(\OM[2]) \simeq H^1(U,\OM) = 0\\
\label{simple H 0}
H^0 i^* j_*(\OM[2]) \simeq H^2(U,\OM) \simeq \OM \otimes_\ZM P(\wh\Phi)/Q(\wh\Phi)\\
\label{simple H 1}
H^1 i^* j_*(\OM[2]) \simeq H^3(U,\OM) \simeq \OM
\end{gather}

By our analysis in Subsections \ref{subsec:tt recollement} and
\ref{subsec:F recollement}, we obtain the following results.

\begin{proposition}
We keep the preceding notation. In particular, $X$ is a simple
singularity of type $\G$.

Over $\KM$, we have canonical isomorphisms
\begin{equation}
\p j_! (\KM[2]) \simeq \p j_{!*} (\KM[2]) \simeq \p j_* (\KM[2])
\simeq \KM_X[2]
\end{equation}
In particular, $X$ is $\KM$-smooth.

Over $\OM$, we have canonical isomorphisms
\begin{gather}
\label{simple iso IC}
\p j_! (\OM[2])
\simeq \pp j_! (\OM[2])
\simeq \p j_{!*} (\OM[2])
\simeq \OM_X[2]
\\
\label{simple iso IC+}
\pp j_{!*} (\OM[2])
\simeq \p j_* (\OM[2])
\simeq \pp j_* (\OM[2])
\end{gather}
and a short exact sequence in $\p\MC(X,\OM)$
\begin{equation}
\label{simple ses}
0 \longto \p j_{!*} (\OM[2])
\longto \pp j_{!*} (\OM[2])
\longto i_* \OM \otimes_\ZM (P(\wh\Phi)/Q(\wh\Phi))
\longto 0
\end{equation}

Over $\FM$, we have canonical isomorphisms
\begin{gather}
\label{simple iso F IC}
\F\p j_!\ (\OM[2])
\isom \p j_!\ (\FM[2])
\isom \F\pp j_!\ (\OM[2])
\isom \F\p j_{!*}\ (\OM[2])
\isom \FM_X[2]
\\
\label{simple iso F IC+}
\F\pp j_{!*}\ (\OM[2])
\isom \F\p j_*\ (\OM[2])
\isom \p j_*\ (\FM[2])
\isom \F\pp j_*\ (\OM[2])
\end{gather}
and short exact sequences
\begin{gather}
\label{simple ses F IC}
0 \longto i_* \FM \otimes_\ZM (P(\wh\Phi)/Q(\wh\Phi))
\longto \F\p j_{!*} (\OM[2])
\longto \p j_{!*} (\FM[2])
\longto 0
\\
\label{simple ses F IC+}
0 \longto \p j_{!*}(\FM[2])
\longto \F\pp j_{!*}(\OM[2])
\longto i_* \FM \otimes_\ZM \left(P(\wh\Phi)/Q(\wh\Phi)\right)
\longto 0
\end{gather}

We have
\begin{equation*}
\label{simple dec}
[\F\p {j}_{!*}\, (\OM[2]) : i_* \FM]
= [\F\pp {j}_{!*}\, (\OM[2]) : i_* \FM]
= \dim_\FM \FM \otimes_\ZM \left(P(\wh\Phi)/Q(\wh\Phi)\right)
\end{equation*}

In particular, $\F\p j_{!*}\ (\OM[2])$ is simple
\textup{(}and equal to $\F\pp j_{!*}\ (\OM[2])$\textup{)}
if and only if $\ell$ does not divide the
connection index $|P(\wh\Phi)/Q(\wh\Phi)|$ of $\wh\Phi$. The variety
$X$ is $\FM$-smooth under the same condition.
\end{proposition}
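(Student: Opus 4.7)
The plan is to harvest the cohomology computation already in hand, package it through the six-term chain of perverse extensions from Subsection \ref{subsec:tt recollement}, and finally descend to $\FM$ via the modular-reduction triangles \eqref{tri:mod rec first}--\eqref{tri:mod rec last}. Since $X \subset \AM^3$ carries a contracting $\GM_m$-action, Proposition \ref{prop:gencone} gives $i^* j_* (\OM[2]) \simeq R\Gamma(U,\OM)[2]$, and \eqref{simple H -1}, \eqref{simple H 0}, \eqref{simple H 1} then tell us that among the perverse cohomology sheaves of $i^* j_*(\OM[2])$, the only nonzero torsion/free pieces are $\p\HC^0_{\tors} = \OM \otimes_\ZM (P(\wh\Phi)/Q(\wh\Phi))$ and $\p\HC^1_{\free} = \OM$. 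Crucially, none of $\p\HC^{-1}_{\tors}$, $\p\HC^{-1}_{\free}$, $\p\HC^0_{\free}$, $\p\HC^1_{\tors}$ survives.

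Over $\OM$, I feed this vanishing into the five triangles \eqref{tri:rec tt first}--\eqref{tri:rec tt last}. Four of the five cones vanish, giving the isomorphisms $\p j_! \simeq \pp j_! \simeq \p j_{!*}$ and $\pp j_{!*} \simeq \p j_* \simeq \pp j_*$; the remaining triangle $\p j_{!*} \to \pp j_{!*} \to i_*\p\HC^0_{\tors} i^*j_*$ lies entirely in $\p\MC(X,\OM) \cap \pp\MC(X,\OM)$ (all terms are perverse) and so yields the short exact sequence \eqref{simple ses}. To identify this common extension with $\OM_X[2]$, I verify the support characterization preceding \eqref{eq:chain ptF}: $i^* \OM_X[2] = \OM[2]$ is trivially in $\DC_F^{\leqslant -2}$, while applying $i^!$ to the triangle $i_* i^! \OM_X[2] \to \OM_X[2] \to Rj_* \OM_U[2]$ and using the cohomology of $U$ shows $i^! \OM_X[2]$ is concentrated in degrees $1$ and $2$, hence in $\DC_F^{\geqslant 0}$. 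This pins down $\OM_X[2] \simeq \p j_!(\OM[2])$, completing \eqref{simple iso IC} and \eqref{simple iso IC+}.

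The $\KM$ case is immediate from applying the exact functor $\KM \otimes_\OM -$: since $P(\wh\Phi)/Q(\wh\Phi)$ is a finite abelian group, $\KM \otimes_\ZM (P/Q) = 0$, and every cone in \eqref{tri:rec tt first}--\eqref{tri:rec tt last} becomes zero, proving $\KM$-smoothness. For the $\FM$ case I apply $\FM \otimes_\OM^L -$ and process the eight triangles \eqref{tri:mod rec first}--\eqref{tri:mod rec last}. Five of the right-hand terms vanish outright from the vanishing of $\p\HC^{\pm 1}$ torsion/free and $\p\HC^0_{\free}$. The three surviving triangles involve $\p\HC^n \FM(\OM \otimes_\ZM (P/Q))$ for $n = -1,0$, which I compute via the standard free resolution $0 \to \OM \xrightarrow{\varpi} \OM \to \FM \to 0$: both $\FM \otimes_\OM^L M$-cohomologies are canonically $\FM \otimes_\ZM (P(\wh\Phi)/Q(\wh\Phi))$. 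Substituting into the surviving triangles, and observing again that the relevant terms land in the common heart so that distinguished triangles read as short exact sequences, produces \eqref{simple iso F IC}, \eqref{simple iso F IC+}, \eqref{simple ses F IC}, and \eqref{simple ses F IC+}.

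Finally, reading composition factors off \eqref{simple ses F IC} gives $[\F \p j_{!*}(\OM[2]) : i_*\FM] = \dim_\FM \FM \otimes_\ZM (P(\wh\Phi)/Q(\wh\Phi))$, and simplicity of $\F \p j_{!*}(\OM[2])$ is equivalent to this dimension being zero, i.e. $\ell \nmid |P(\wh\Phi)/Q(\wh\Phi)|$; the $\FM$-smoothness statement is the same criterion read through the definition. The main obstacle is bookkeeping: one must handle six $\OM$-extensions and three $\FM$-extensions simultaneously, distinguish $p$- from $p_+$-truncations at each stage, and consistently check that each vanishing cone lives in the correct abelian category so that the passage ``zero cone $\Imp$ isomorphism'' (and ``triangle $\Imp$ short exact sequence'') is legitimate inside $\p\MC(X,\OM) \cap \pp\MC(X,\OM)$ or $\p\MC(X,\FM)$.
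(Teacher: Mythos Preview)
Your proof is correct and follows essentially the same route as the paper: feed the cohomology computations \eqref{simple H -1}--\eqref{simple H 1} into the triangles \eqref{tri:rec tt first}--\eqref{tri:rec tt last} over $\OM$ and \eqref{tri:mod rec first}--\eqref{tri:mod rec last} over $\FM$, exactly as the paper's one-paragraph proof indicates. Your added details (the explicit verification that $\OM_X[2]$ satisfies the support conditions characterizing $\p j_!$, and the computation of $\HC^{-1}$ and $\HC^0$ of $\FM \otimes_\OM^L (\OM \otimes_\ZM P/Q)$) are correct and helpful; note only the trivial slip that six, not five, of the eight cones in \eqref{tri:mod rec first}--\eqref{tri:mod rec last} vanish, leaving two surviving triangles rather than three.
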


\begin{proof}
Taking into account \eqref{simple H -1}, \eqref{simple H 0} and
\eqref{simple H 1}, the statements over $\KM$ follow from
the triangles \eqref{tri:! !*} and \eqref{tri:!* *},
the statements over $\OM$ follow from the triangles
\eqref{tri:rec tt first} to \eqref{tri:rec tt last},
and the statements over $\FM$ follow from the triangles
\eqref{tri:mod rec first} to \eqref{tri:mod rec last}.
The determination of the decomposition number follows.
\end{proof}

Let us give this decomposition number in each type:
\[
\begin{array}{l|l|l}
\wh\G & P(\wh\Phi)/Q(\wh\Phi) &
[\F \p {j}_{!*}\ (\OM[2]) : i_*\ \FM]\\
\hline
A_n & \ZM / (n + 1) &
      1 \text{ if } \ell \mid n + 1,\ 0 \text{ otherwise}\\
D_n\ (n \text{ even}) & (\ZM/2)^2 &
      2 \text{ if } \ell = 2,\ 0\text{ otherwise}\\
D_n\ (n \text{ odd}) & \ZM/4 &
      1 \text{ if } \ell = 2,\ 0\text{ otherwise}\\
E_6 & \ZM/3 &
      1 \text{ if } \ell = 3,\ 0\text{ otherwise}\\
E_7 & \ZM/2 &
      1 \text{ if } \ell = 2,\ 0\text{ otherwise}\\
E_8 & 0 & 0
\end{array}
\]

Let us note that for $\G = E_8$, the variety $X$ is $\FM$-smooth for
any $\ell$. However, it is not smooth, since it has a double point.

In the preceding calculations, the closed stratum was just a point,
and local systems on a point can be considered as $\EM$-modules.
However, for the next application (to the subregular orbit), 
non-trivial local systems may occur. For that reason, we have to
keep track of the action of $A(\G)$.

Let us first recall some facts
from \cite{BOUR456}. Let $\Aut(\wh\Phi)$ denote the group of
automorphisms of $\wh V$ stabilizing $\wh\Phi$. The subgroup of
$\Aut(\wh\Phi)$ of the elements stabilizing $\wh\D$ is identified with
$\Aut(\wh\G)$. The Weyl group $W(\wh\Phi)$ is a normal subgroup of
$\Aut(\wh\Phi)$, and $\Aut(\wh\Phi)$ is the semi-direct product of
$\Aut(\wh\G)$ and $W(\wh\Phi)$ \cite[Chap. VI, \S 1.5, Prop. 16]{BOUR456}.

The group $\Aut(\wh\Phi)$ stabilizes $P(\wh\Phi)$ and $Q(\wh\Phi)$,
thus it acts on the quotient $P(\wh\Phi)/Q(\wh\Phi)$. By
\cite[Chap. VI, \S 1.10, Prop. 27]{BOUR456}, the group $W(\wh\Phi)$
acts trivially on $P(\wh\Phi)/Q(\wh\Phi)$. Thus, the quotient group
$\Aut(\wh\Phi)/W(\wh\Phi) \simeq \Aut(\wh\G)$ acts canonically on
$P(\wh\Phi)/Q(\wh\Phi)$.

Now $A(\G)$ acts on $X$, $\wt X$, $E$ and $U$, and hence on their
cohomology (with or without supports). Moreover, the action of $A(\G)$
on $H^2_c(E,\OM) \simeq \OM \otimes_\ZM P^\vee(\wh\Phi)$ is the one
induced by the inclusions
$A(\G) \subset \Aut(\wt\G) \subset \Aut(\wh\Phi)$.
The inclusions of $E$ and $U$ in $\wt X$ are $A(\G)$-equivariant,
hence the maps in the long exact sequence in cohomology with compact
support that we considered earlier (to calculate $H^3_c(U,\OM)$)
are $A(\G)$-equivariant. Thus the action of $A(\G)$ on
$H^3_c(U,\OM) \simeq P^\vee(\wh\Phi)/Q^\vee(\wh\Phi)$ is induced by the
inclusion $A(\G) \subset \Aut(\G) \simeq \Aut(\wh\Phi)/W(\wh\Phi)$
from the canonical action. It follows that the action of $A(\G)$ on
$H^2(U,\G) \simeq P(\wh\Phi)/Q(\wh\Phi)$ also comes from the canonical
action of $\Aut(\wh\Phi)/W(\wh\Phi)$.

\subsection{Subregular class}
\label{subsec:subreg}

Let $G$ be a simple and adjoint algebraic group over $k$ of type $\G$.
We will recall some facts about the geometry of the subregular orbit
from \cite{SLO2}.
We assume that the characteristic of $k$ is $0$ or greater than $4h - 2$
(where $h$ is the Coxeter number). This is a serious restriction on
$p$, but it does not matter so much for our purposes. Note that, on
the other hand, we make no assumption on $\ell$ (the only restriction
is $\ell \neq p$).

Let $\NC$ denote the nilpotent cone in the Lie algebra $\gG$ of $G$.
Let $\OC_\reg$ (resp. $\OC_\subreg$) be the regular
(resp. subregular) orbit in $\NC$.
The  orbit $\OC_\subreg$ is the unique
open dense orbit in $\NC \setminus \OC_\reg$
(we assume that $\gG$ is simple). It is of
codimension 2 in $\NC$. Let $x_\reg \in \OC_\reg$ and
$x_\subreg \in \OC_\subreg$.

The centralizer of $x_\reg$ in $G$ is a connected unipotent subgroup,
hence $A_G(x_\reg) = 1$. The unipotent radical of the centralizer in
$G$ of $x_\subreg$ has a reductive complement $C$ given by the following
table.

\[
\begin{array}{c|c|c|c|c|c|c|c|c|c}
\G & A_n\ (n > 1) & B_n & C_n & D_n & E_6 & E_7 & E_8 & F_4 & G_2\\
\hline
C(x) & \GM_m & \GM_m \rtimes \ZM/2 & \ZM/2 & 1 & 1 & 1 & 1 & \ZM/2 & \SG_3
\end{array}
\]

In type $A_1$, the subregular class is just the trivial class, so in
this case the centralizer is $G = PSL_2$ itself, which is reductive.

We have $A_G(x_\subreg) \simeq C/C^0$. This group is isomorphic to
the associated symmetry group $A(\G)$ introduced in Subsection
\ref{subsec:symmetries}.

Let $X$ be the intersection $X = S \cap \NC$ of a transverse slice $S$ to the
orbit $\OC_\subreg$ of $x_\subreg$ with the nilpotent variety $\NC$.
The group $C$ acts on $X$. We can find a section $A$ of
$C/C^0 \simeq A_G \simeq A(\G)$ in $C$. In homogeneous types, $A$ is
trivial. If $\G = C_n$, $F_4$ or $G_2$, then $A = C$. If $\G = B_n$,
take $\{1,s\}$ where $s$ is a nontrivial involution (in this case, $A$
is well-defined up to conjugation by $C^0 = \GM_m$).

\begin{theorem}
\cite{BRI,SLO1,SLO2}
\label{th:subreg}
We keep the preceding notation.
The surface $X$ has a rational double point of type $\wh\G$ at $x_\subreg$.
Thus
\[
\Sing(\ov \OC_\reg, \OC_\subreg) = \wh\G
\]

Moreover the couple $(X,A)$ is a simple singularity of type $\G$.
\end{theorem}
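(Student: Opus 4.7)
The plan is not to reprove this famous theorem from scratch, but rather to indicate how it follows from the cited works of Brieskorn and Slodowy, and to highlight which inputs are essential for the subsequent decomposition-number calculations that motivated the statement. My write-up would proceed in two steps, one for each assertion.

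First I would handle the homogeneous case, i.e., I would explain Brieskorn's original theorem which identifies $\Sing(\ov\OC_\reg,\OC_\subreg)$ with a Kleinian singularity of type $\wh\G = \G$. The key tool is the adjoint quotient $\chi: \gG \to \gG /\!/ G \simeq \tG/W$ and its restriction $\chi_S : S \to \tG/W$ to a Slodowy transverse slice $S$ through $x_\subreg$. Because $S$ meets every regular $G$-orbit in exactly $|W|$ points and meets $\OC_\subreg$ transversely in one point, $\chi_S$ is a flat family of surfaces whose generic fiber is smooth and whose zero fiber $\chi_S^{-1}(0) = S \cap \NC = X$ has an isolated singularity at $x_\subreg$. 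Brieskorn's theorem then asserts that after base change along the quotient $\tG \to \tG/W$, the family $\chi_S$ admits a simultaneous resolution, so that $X$ is the central fiber of a semi-universal deformation of a rational double point, and the dual graph of its minimal resolution is $\wh\G$. The assumption on $p$ (large enough compared to the Coxeter number) is precisely what is needed to ensure that this picture, developed originally in characteristic zero, still goes through.

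Next I would turn to the inhomogeneous case, following Slodowy. One takes the reductive complement $C$ in the centralizer of $x_\subreg$; since $\chi$, $S$ and $\NC$ are all $C$-stable (the slice can be chosen $C$-equivariantly), the group $C$, and hence its component group $A_G(x_\subreg) = C/C^0$, acts on $X$ fixing $x_\subreg$. The table in Subsection~\ref{subsec:subreg} shows that $A_G(x_\subreg) \simeq A(\G)$, and one checks that $A$ acts freely on $X \setminus \{x_\subreg\}$ (the regular orbit is a single $G$-orbit, so the $C^0$-action on $S \cap \OC_\reg$ has trivial stabilizers and $A$ survives as a free quotient action). Since the $A$-action lifts uniquely to the minimal resolution and permutes the exceptional components, it induces an automorphism of the dual diagram $\wh\G$; the identification of this induced action with the standard action of $A(\G)$ on $\wh\G$ producing $\G$ as quotient is obtained by comparing with the simultaneous resolution, on which $A(\G)$ acts through its inclusion into $\Aut(\wh\Phi)/W(\wh\Phi) \simeq \Aut(\wh\G)$, exactly as described at the end of Subsection~\ref{subsec:dec simple}. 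The characterization of simple singularities of inhomogeneous type recalled in the Proposition of Subsection~\ref{subsec:symmetries} then gives that $(X,A)$ is a simple singularity of type $\G$.

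The hard part, and the part I would not attempt to reprove, is Brieskorn's theorem proper: the construction of the simultaneous resolution of $\chi_S$ and the identification of the resulting dual graph with $\wh\G$. This rests on a delicate interplay between the Springer resolution of $\NC$, the Grothendieck simultaneous resolution of $\gG$, and the geometry of transverse slices, and is precisely what is worked out in~\cite{BRI,SLO1,SLO2}. The rest of the argument amounts to bookkeeping with $C$-equivariance and a comparison of finite-group actions, which is formal once Brieskorn's theorem is available.
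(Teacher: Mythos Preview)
The paper does not prove this theorem at all: it is stated as a citation of results of Brieskorn and Slodowy and then used as a black box in the subsequent decomposition-number calculation. So your write-up already goes well beyond what the paper does, and your outline of the argument from \cite{BRI,SLO1,SLO2} is essentially accurate.

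A couple of small points in your sketch are worth tightening. The assertion that ``$S$ meets every regular $G$-orbit in exactly $|W|$ points'' is not quite the right formulation; what one actually uses is that $\chi_S : S \to \tG/W$ is flat of relative dimension $2$ with smooth generic fibre, and that after base change along $\tG \to \tG/W$ it acquires a simultaneous resolution. Also, the claim that $A$ acts freely on $X\setminus\{x_\subreg\}$ is not an immediate consequence of $\OC_\reg$ being a single orbit with connected stabilizers; one really needs Slodowy's analysis of the $C$-action on the slice. These are minor, and you rightly flag that the substantive content (the simultaneous resolution and the identification of the dual graph) is what you would take on faith from the references.
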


In fact, the first part of the theorem is already true when the
characteristic of $k$ is very good for $G$. This part is enough to
calculate the decomposition numbers $d_{(x_\reg,1),(x_\subreg,1)}$ for
homogeneous types (then $A = 1$), and even some more decomposition
numbers $d_{(x_\reg,1),(x_\subreg,\rho)}$ for the other
types. Actually, what can be deduced in all types is the following
relation:
\[
\sum_{\rho\in\Irr\FM A} \rho(1) \cdot d_{(x_\reg,1),(x_\subreg,\rho)}
= \dim_\FM \FM \otimes_\ZM P(\wh\Phi)/Q(\wh\Phi)
\]
This is enough, for example, to determine for which $\ell$ we have
\[
\forall \rho \in \Irr \FM A,\ 
d_{(x_\reg,1),(x_\subreg,\rho)} = 0
\]
(those $\ell$ are the ones
which do not divide the connection index of $\wh\Phi$).

Anyway, the second part of the theorem will allow us to deal with the
local systems involved on $\OC_\subreg$.

Let $\jreg : \OC_\reg \injto \OC_\reg \cup \OC_\subreg$ be the open immersion,
and $i_\subreg : \OC_\subreg \injto \OC_\reg \cup \OC_\subreg$ the closed
complement. Finally, let $j$ be the open inclusion of $\OC_\subreg
\cup \OC_\reg$ into $\NC$. Applying the functor $j^*$, we see that
\begin{eqnarray*}
d^\NC_{(x_\reg,1),(x_\subreg,\rho)}
&:=& [\FM \p \JC_{!*} (\OC_\reg, \OM) :
   \p \JC_{!*} (\OC_\subreg, \rho)]\\
&=& [\FM \jreg_{!*} (\OM[2\nu]) : \isubreg_{*} (\rho [2\nu + 2])]
\end{eqnarray*}

By Slodowy's theorem and the analysis of Subsection \ref{subsec:dec simple}, we
obtain the following result:

\begin{theorem}
We have
\begin{equation*}
d^\NC_{(x_\reg,1),(x_\subreg,\rho)} 
= [\FM \otimes_\ZM P(\wh\Phi)/Q(\wh\Phi) : \rho]
\end{equation*}
for all $\rho$ in $\Irr \FM A$.
\end{theorem}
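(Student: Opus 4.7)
The plan is to reduce the computation to the simple singularity calculation of Subsection \ref{subsec:dec simple}, carrying along the $A$-equivariance. First I would apply $j^*$, where $j : \OC_\reg \cup \OC_\subreg \injto \NC$ is the open inclusion, to rewrite
\[
d^\NC_{(x_\reg,1),(x_\subreg,\rho)}
= [\FM\, \jreg_{!*}(\OM[2\nu]) : \isubreg{}_*(\rho[2\nu+2])],
\]
so that the question becomes local near the subregular stratum.

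Next, by Theorem \ref{th:subreg}, the couple $(X,A)$ with $X = S \cap \NC$ is a simple singularity of type $\G$, and the smooth $C$-equivariant morphism $G \times X \to \ov\OC_\reg$ constructed from the transverse slice establishes an $A$-equivariant smooth equivalence $\Sing(\ov\OC_\reg,\OC_\subreg) = \Sing(X,0)$ in the sense of Definition \ref{def:equiv}. By the $A$-equivariant version of Proposition \ref{prop:equiv}, the stalk of $\FM\,\jreg_{!*}(\OM[2\nu])$ at $x_\subreg$ is isomorphic, as an $\FM A$-module up to the appropriate shift, to the stalk at $0$ of $\FM\,\p j_{!*}(\OM[2])$ on the simple singularity $X$. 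Thus
\[
d^\NC_{(x_\reg,1),(x_\subreg,\rho)}
= [\FM\,\p j_{!*}(\OM[2])_0 : \rho]_{\FM A},
\]
where the right-hand side is the multiplicity of $\rho$ in the $\FM A$-module concentrated at the closed point of the simple singularity.

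Now I would invoke the short exact sequence \eqref{simple ses F IC} from Subsection \ref{subsec:dec simple}: the only contribution from the closed stratum to $\FM\,\p j_{!*}(\OM[2])$ is $i_* \FM \otimes_\ZM (P(\wh\Phi)/Q(\wh\Phi))$. Taking $A$-equivariance into account, the $\FM A$-module structure of this contribution is determined by the $A$-module structure on $\FM\otimes_\ZM P(\wh\Phi)/Q(\wh\Phi)$. The final paragraph of Subsection \ref{subsec:dec simple} identifies this action as the canonical action of $\Aut(\wh\Phi)/W(\wh\Phi) \simeq \Aut(\wh\G)$ restricted to $A = A(\G) \subset \Aut(\wh\G)$, via the $A$-equivariant long exact sequence computing $H^2(U,\OM)$.

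The main obstacle is the bookkeeping of $A$-equivariance: one must verify both that the transverse slice construction really does produce an $A$-equivariant smooth equivalence (so that $A_G(x_\subreg) = A$ acts on the stalk via the action of $A$ on $X$), and that under the identification $H^2(U,\OM) \simeq \OM\otimes_\ZM P(\wh\Phi)/Q(\wh\Phi)$ the induced action on the stalk of $\p j_{!*}$ agrees with the canonical $\Aut(\wh\G)$-action on the weight lattice modulo the root lattice. Once these compatibilities are in place, taking the multiplicity of an irreducible $\rho \in \Irr \FM A$ in $\FM \otimes_\ZM P(\wh\Phi)/Q(\wh\Phi)$ yields the announced formula.
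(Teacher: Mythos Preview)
Your proposal is correct and follows essentially the same route as the paper: the paper reduces via $j^*$ to the open union $\OC_\reg \cup \OC_\subreg$ (exactly as you do), invokes Theorem~\ref{th:subreg} to identify the transverse slice with a simple singularity of type $\G$ equipped with its $A(\G)$-action, and then reads off the decomposition number from the $A$-equivariant computation carried out at the end of Subsection~\ref{subsec:dec simple}. Your explicit identification of the $A$-equivariance bookkeeping as the main obstacle, and your pointer to the final paragraph of Subsection~\ref{subsec:dec simple} for the action on $P(\wh\Phi)/Q(\wh\Phi)$, match precisely what the paper relies on.
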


For homogeneous types, we recover the decomposition numbers described
in Subsection \ref{subsec:dec simple}. Let us describe in detail all the other
possibilities. The action of $\Aut(\wh\Phi)/W(\wh\Phi)$ on
$P(\wh\Phi)/Q(\wh\Phi)$ is described in all types in
\cite[Chap. VI, \S 4]{BOUR456}.

In the types $B_n$, $C_n$ and $F_4$, we have $A \simeq
\ZM/2$. When $\ell = 2$, we have $\Irr \FM A = \{1\}$. In this case,
we would not even need to know the actual action, since for our
purposes we only need the class in the Grothendieck group
$K_0(\FM A) \simeq \ZM$, that is, the dimension. When $\ell$ is not
$2$, we have $\Irr \FM A = \{1,\e\}$, where $\e$ is the unique
non-trivial character of $\ZM/2$.

\subsubsection{Case $\G = B_n$}
We have $\wh\G = A_{2n - 1}$ and
$P(\wh\Phi)/Q(\wh\Phi) \simeq \ZM/2n$. The non-trivial element of
$A \simeq \ZM/2$ acts by $-1$. Thus we have

\[
\begin{array}{lllll}
\text{If } \ell = 2, &\text{then}& d^\NC_{(x_\reg,1),(x_\subreg,1)} = 1\\
\text{If } 2 \neq \ell \mid n, &\text{then}& d^\NC_{(x_\reg,1),(x_\subreg,1)} = 0
&\text{and}& d^\NC_{(x_\reg,1),(x_\subreg,\e)} = 1\\
\text{If } 2 \neq \ell \nmid n, &\text{then}& d^\NC_{(x_\reg,1),(x_\subreg,1)} = 0
&\text{and}& d^\NC_{(x_\reg,1),(x_\subreg,\e)} = 0
\end{array}
\]

\subsubsection{Case $\G = C_n$}

We have $\wh\G = D_{n + 1}$.

If $n$ is even, then we have
$P(\wh\Phi)/Q(\wh\Phi) \simeq \ZM/4$,
and the nontrivial element of $A \simeq \ZM/2$ acts by $-1$.

If $n$ is odd, then we have
$P(\wh\Phi)/Q(\wh\Phi) \simeq (\ZM/2)^2$,
and the nontrivial element of $A \simeq \ZM/2$ acts by exchanging two
nonzero elements.

Thus we have
\[
\begin{array}{lll}
\text{If } \ell = 2 \text{ and $n$ is even},& \text{then}& d^\NC_{(x_\reg,1),(x_\subreg,1)} = 1\\
\text{If } \ell = 2 \text{ and $n$ is odd},& \text{then}& d^\NC_{(x_\reg,1),(x_\subreg,1)} = 2\\
\text{If } \ell \neq 2, &\text{then}& 
d^\NC_{(x_\reg,1),(x_\subreg,1)} = d^\NC_{(x_\reg,1),(x_\subreg,\e)} = 0
\end{array}
\]

\subsubsection{Case $\G = F_4$}

We have $\wh\G = E_6$ and $P(\wh\Phi)/Q(\wh\Phi) \simeq \ZM/3$.
The nontrivial element of $A \simeq \ZM/2$ acts by $-1$.
Thus we have
\[
\begin{array}{lllll}
\text{If } \ell = 2, &\text{then}&  d^\NC_{(x_\reg,1),(x_\subreg,1)} = 0\\
\text{If } \ell = 3, &\text{then}&  d^\NC_{(x_\reg,1),(x_\subreg,1)} = 0
&\text{and}& d^\NC_{(x_\reg,1),(x_\subreg,\e)} = 1\\
\text{If } \ell > 3, &\text{then}& d^\NC_{(x_\reg,1),(x_\subreg,1)} = 0
&\text{and}& d^\NC_{(x_\reg,1),(x_\subreg,\e)} = 0
\end{array}
\]

\subsubsection{Case $\G = G_2$}

We have $\wh\G = D_4$ and $P(\wh\Phi)/Q(\wh\Phi) \simeq (\ZM/2)^2$.
The group $A \simeq \SG_3$ acts by permuting the three non-zero
elements. Let us denote the sign character by $\e$ (it is nontrivial
when $\ell \neq 2$), and the degree two character by $\psi$ (it
remains irreducible for $\ell = 2$, but for $\ell = 3$ it decomposes
as $1 + \e$). We have
\[
\begin{array}{lll}
\text{If } \ell = 2, &\text{then}&  d^\NC_{(x_\reg,1),(x_\subreg,1)} = 0
\quad\text{and}\quad d^\NC_{(x_\reg,1),(x_\subreg,\psi)} = 1\\
\text{If } \ell = 3, &\text{then}&  d^\NC_{(x_\reg,1),(x_\subreg,1)} = 
 d^\NC_{(x_\reg,1),(x_\subreg,\e)} = 0\\
\text{If } \ell > 3, &\text{then}& d^\NC_{(x_\reg,1),(x_\subreg,1)} = 
d^\NC_{(x_\reg,1),(x_\subreg,\e)} = d^\NC_{(x_\reg,1),(x_\subreg,\psi)} = 0
\end{array}
\]

\section{Minimal singularities}
\label{sec:min}

Let $G$ be as in the last section. We assume that $p$ is good. We consider the unique
(non-trivial) minimal nilpotent orbit $\OC_\mini$ in $\gG$ (it is the
orbit of a highest weight vector for the adjoint representation).
It is of dimension $d = 2 h^\vee - 2$, where $h^\vee$
is the dual Coxeter number \cite{WANG}.

Its closure $\ov \OC_\mini = \OC_\mini \cup \{0\}$ is a cone with
origin $0$.  Let $\jmin : \OC_\mini \to \ov \OC_\mini$ be the open
immersion, and $\io : \{0\} \to \ov \OC_\mini$ the closed
complement. By Proposition \ref{prop:cone}, we have
\[
\io^* \jmin_* (\OM[d]) \simeq \bigoplus_i H^{i + d}(\OC_\mini,\OM) [-i]
\]

Let $\Phi$ denote the root system of $\gG$ and let us choose some
basis of $\Phi$. Let $\Phi'$ be the root subsystem of $\Phi$ generated
by the long simple roots.  In \cite{cohmin}, we computed the
cohomology of $\OC_\mini$ over $\OM$. In particular, we
obtained the following results:
\begin{gather}
\label{min H -1}
H^{-1} \io^* \jmin_* (\OM[d]) = H^{d-1} (\OC_\mini, \OM) = 0
\\
\label{min H 0}
H^0 \io^* \jmin_* (\OM[d])
 = H^{d} (\OC_\mini, \OM)
 = \OM \otimes_\ZM (P^\vee(\Phi')/Q^\vee(\Phi'))
\\
\label{min H 1}
H^1 \io^* \jmin_* (\OM[d]) = H^{d+1} (\OC_\mini, \OM)
\text{ is torsion-free}
\end{gather}

By the distinguished triangles in Subsections
\ref{subsec:tt recollement} and \ref{subsec:F recollement},
we obtain the following:
\begin{theorem}
Over $\OM$, we have canonical isomorphisms
\begin{gather*}
\label{min iso IC}
\p \jmin_! (\OM[d])
\simeq \pp \jmin_! (\OM[d])
\simeq \p \jmin_{!*} (\OM[d])
\\
\label{min iso IC+}
\pp \jmin_{!*} (\OM[d])
\simeq \p \jmin_* (\OM[d])
\simeq \pp \jmin_* (\OM[d])
\end{gather*}
and a short exact sequence
{\small
\begin{equation*}
\label{min ses}
0 \longto \p \jmin_{!*} (\OM[d])
\longto \pp \jmin_{!*} (\OM[d])
\longto \io_* \OM \otimes_\ZM (P^\vee(\Phi')/Q^\vee(\Phi'))
\longto 0
\end{equation*}
}

Over $\FM$, we have canonical isomorphisms
{\small
\begin{gather*}
\label{min iso F IC}
\F \p \jmin_!(\OM[d])
\isom \p \jmin_!(\FM[d])
\isom \F\pp \jmin_!(\OM[d])
\isom \F\p \jmin_{!*}(\OM[d])
\\
\label{min iso F IC+}
\F\pp \jmin_{!*}(\OM[d])
\isom \F\p \jmin_*(\OM[d])
\isom \p \jmin_*(\FM[d])
\isom \F\pp \jmin_*(\OM[d])
\end{gather*}
}
and short exact sequences
{\small
\begin{gather*}
\label{min ses F IC}
0 \longto \io_*\FM \otimes_\ZM (P^\vee(\Phi')/Q^\vee(\Phi'))
\longto \F\p \jmin_{!*}(\OM[d])
\longto \p \jmin_{!*}(\FM[d])
\longto 0
\\
\label{min ses F IC+}
0 \longto \p \jmin_{!*}(\FM[d])
\longto \F\pp \jmin_{!*}(\OM[d])
\longto \io_*\FM \otimes_\ZM (P^\vee(\Phi')/Q^\vee(\Phi'))
\longto 0
\end{gather*}
}

We have
{\scriptsize
\begin{equation*}\label{min dec}
[\F\p {j_\mini}_{!*}(\OM[d]) : \io_*\FM]
= [\F\pp {j_\mini}_{!*}(\OM[d]) : \io_*\FM]
= \dim_\FM \FM \otimes_\ZM \left( P^\vee(\Phi')/Q^\vee(\Phi') \right)
\end{equation*}
}

In particular, $\F\p {j_\mini}_{!*}(\OM[d])$ is simple 
(and equal to $\F\pp {j_\mini}_{!*}(\OM[d])$) if and only if
$\ell$ does not divide the connection index of $\Phi'$.
\end{theorem}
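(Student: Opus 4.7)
The plan is to follow exactly the same pattern as for simple singularities in Subsection~\ref{subsec:dec simple}, feeding the cohomological input from \cite{cohmin} into the general distinguished triangles of Subsections~\ref{subsec:tt recollement} and~\ref{subsec:F recollement}. Since $\ov\OC_\mini$ is a cone with apex $0$ (the action of $\GM_m$ on $\gG$ by scalar multiplication contracts $\ov\OC_\mini$ onto $0$), Proposition~\ref{prop:gencone} applies to $\jmin:\OC_\mini\injto\ov\OC_\mini$ with closed complement $\io:\{0\}\injto\ov\OC_\mini$, giving $\io^* \jmin_* \OM \simeq \rg(\OC_\mini,\OM)$ and hence, after the shift by $d=\dim\OC_\mini$, identifications of $\p\HC^k\,\io^* \jmin_*(\OM[d])$ with $H^{d+k}(\OC_\mini,\OM)$ for $k=-1,0,1$. (The closed stratum is a point, so the perverse and natural $t$-structures coincide on it.)

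First I would plug \eqref{min H -1}--\eqref{min H 1} into the five triangles \eqref{tri:rec tt first}--\eqref{tri:rec tt last}. The vanishing of $\p\HC^{-1}\io^*\jmin_*$ collapses the first two triangles into the isomorphisms $\p \jmin_!\simeq\pp \jmin_!\simeq \p \jmin_{!*}$. Since $H^{d+1}(\OC_\mini,\OM)$ is torsion-free, we have $\p\HC^1_\tors\io^*\jmin_* = 0$, collapsing the last triangle into $\p \jmin_*\simeq\pp \jmin_*$. Next, since $\p\HC^0\io^*\jmin_*\simeq \OM\otimes_\ZM P^\vee(\Phi')/Q^\vee(\Phi')$ is torsion (it is a finite abelian group tensored with $\OM$), its free part vanishes, so the fourth triangle yields $\pp \jmin_{!*}\simeq \p \jmin_*$, while the third triangle becomes the claimed short exact sequence in $\p\MC(\ov\OC_\mini,\OM)$.

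Next I would apply the same vanishings to the eight modular-reduction triangles \eqref{tri:mod rec first}--\eqref{tri:mod rec last}. The hypotheses $\p\HC^{-1}_\tors i^*j_* = \p\HC^{-1}_\free i^*j_* = 0$ kill the first three triangles, giving $\FM\,\p \jmin_! \isom \p \jmin_!(\FM[d]) \isom \FM\,\pp \jmin_!\isom \FM\,\p \jmin_{!*}$, and the last two triangles are killed by $\p\HC^1_\tors i^*j_* = 0$, giving $\FM\,\pp \jmin_{!*}\isom \FM\,\p \jmin_*\isom \p \jmin_*(\FM[d])\isom \FM\,\pp \jmin_*$. The remaining two triangles (the fourth and the fifth) use $\p\HC^0_\free i^*j_* = 0$ and read off the two short exact sequences involving $\io_*\FM\otimes_\ZM P^\vee(\Phi')/Q^\vee(\Phi')$. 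Counting composition factors of $\FM\,\p \jmin_{!*}(\OM[d])$ and using that $\p \jmin_{!*}(\FM[d])$ has no $\io_*\FM$-subquotient other than through the short exact sequence yields the stated multiplicity, and hence the simplicity criterion.

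The work is essentially bookkeeping once \eqref{min H -1}--\eqref{min H 1} are in hand; there is no real obstacle beyond carefully identifying which triangle simplifies under which vanishing. The one point that deserves attention is the very last assertion: one needs to observe that $\p \jmin_{!*}(\FM[d])$, being a simple perverse sheaf with support $\ov\OC_\mini$, is distinct from the simple $\io_*\FM$, so that the middle term of \eqref{min ses F IC} is simple iff the outer torsion term vanishes, i.e.\ iff $\ell$ does not divide $|P^\vee(\Phi')/Q^\vee(\Phi')|$.
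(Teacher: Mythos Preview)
Your proposal is correct and follows exactly the paper's approach: the paper's entire proof is the sentence ``By the distinguished triangles in Subsections~\ref{subsec:tt recollement} and~\ref{subsec:F recollement}, we obtain the following,'' and you have unpacked precisely that. One small bookkeeping slip: in the $\FM$-part you account for only seven of the eight triangles --- triangle~6 (third term $\FM\,\p i_*\p\HC^0_\free i^*j_*$) is what collapses to give $\FM\,\pp \jmin_{!*}\isom\FM\,\p \jmin_*$ via $\p\HC^0_\free=0$, while triangles~4 and~5 have third terms built from $\p\HC^0_\tors$ (which is nonzero) and are exactly what produce the two short exact sequences.
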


Let us give this decomposition number in each type.
We denote the singularity of $\ov\OC_\mini$ at the origin by the lower
case letter corresponding to the type $\G$ of $\gG$.

\[
\begin{array}{l|l|l|l}
\textrm{Singularity} & \G' & P^\vee(\Phi')/Q^\vee(\Phi') & d^\NC_{(x_\mini,1),(0,1)}\\
\hline
a_n & A_n & \ZM / (n + 1) &
      1 \text{ if } \ell \mid n + 1,\ 0 \text{ otherwise}\\
b_n & A_{n - 1} & \ZM/n &
      1 \text{ if } \ell \mid n,\ 0 \text{ otherwise}\\
c_n & A_1 & \ZM/2 &
      1 \text{ if } \ell = 2,\ 0 \text{ otherwise}\\
d_n\ (n \text{ even}) & D_n & (\ZM/2)^2 &
      2 \text{ if } \ell = 2,\ 0\text{ otherwise}\\
d_n\ (n \text{ odd}) & D_n & \ZM/4 &
      1 \text{ if } \ell = 2,\ 0\text{ otherwise}\\
e_6 & E_6 & \ZM/3 &
      1 \text{ if } \ell = 3,\ 0\text{ otherwise}\\
e_7 & E_7 & \ZM/2 &
      1 \text{ if } \ell = 2,\ 0\text{ otherwise}\\
e_8 & E_8 & 0 & 0\\
f_4 & A_2 & \ZM/3 &
      1 \text{ if } \ell = 3,\ 0 \text{ otherwise}\\
g_2 & A_1 & \ZM/2 &
      1 \text{ if } \ell = 2,\ 0 \text{ otherwise}
\end{array}
\]

Here we only used the $H^i(\OC_\mini,\OM)$ for $i = d-1$, $d$, $d + 1$,
but in \cite{cohmin}, we computed all of the cohomology of $\OC_\mini$, so
if the reader is interested, one can deduce from that all the stalks
of the perverse extensions. In particular, there is 
torsion in the stalks of $\p \jmin_! (\OM [d])$ only if $\ell$ is bad for $G$.
Note that the singularities $c_n$ (for $n \geqslant 1$, including
$c_1 = a_1 = A_1$ and $c_2 = b_2$) and $g_2$ are $\KM$-smooth but not
$\FM_2$-smooth (actually the latter is not $\FM_3$-smooth either).

\def\cprime{$'$}

\end{document}